\setlist[enumerate]{label={(\roman*)}}
\newcommand{\Z}{\mathbb{Z}}
\newcommand{\N}{\mathbb{N}}
\newcommand{\R}{\mathbb{R}}
\newcommand{\Rbar}{\overline{\R}}
\newcommand{\calL}{\mathcal{L}}
\renewcommand{\epsilon}{\varepsilon}
\newcommand{\norm}[1]{\|#1\|}
\newcommand{\abs}[1]{|#1|}
\newcommand{\bigabs}[1]{\left|\,#1\,\right|}
\newcommand{\dualpair}[1]{\langle#1\rangle}
\newcommand{\di}{\mathrm{d}}
\newcommand{\Prob}[1]{\mathbb{P}\left[#1\right]}
\newcommand{\Exp}[1]{\mathbb{E}\left[#1\right]}
\DeclareMathOperator*{\argmin}{arg\,min}
\DeclareMathOperator*{\argmax}{arg\,max}
\DeclareMathOperator{\spn}{span}
\DeclareMathOperator{\proj}{proj}
\newcommand{\Eg}{{E_\gamma}}
\newcommand{\Egd}{{E_\gamma^\delta}}
\newcommand{\Egn}{{E_\gamma^0}}
\newcommand{\xmap}{x_{\mathrm{MAP}}}
\newtheorem{assumption}[theorem]{Assumption}
\pgfplotsset{compat=newest}
\begin{document}

\title{Generalized modes in Bayesian inverse problems}
\author{%
    Christian Clason\thanks{Faculty of Mathematics, University Duisburg-Essen, Thea-Leymann-Strasse 9, 45127 Essen, Germany\newline%
    (\email{christian.clason@uni-due.de}, \email{remo.kretschmann@uni-due.de})}
    \and Tapio Helin\thanks{School of Engineering Science, LUT University, P.O.~box 20, FI-53851 Lappeenranta, Finland (\email{tapio.helin@lut.fi}), \emph{previously} Department of Mathematics and Statistics, University of Helsinki, Finland}
    \and Remo Kretschmann\footnotemark[1]
    \and Petteri Piiroinen\thanks{Department of Mathematics and Statistics, University of Helsinki, Gustaf Hallströmin katu 2b, FI-00014 Helsinki, Finland (\email{petteri.piiroinen@helsinki.fi})}
}
\hypersetup{
    pdftitle = {Generalized modes in Bayesian inverse problems},
    pdfauthor = {C. Clason, T. Helin, R. Kretschmann, P. Piiroinen},
}

\maketitle

\begin{abstract}
    Uncertainty quantification requires efficient summarization of high- or even infinite-dimensional (i.e., non-parametric) distributions based on, e.g., suitable point estimates (modes) for posterior distributions arising from model-specific prior distributions.
    In this work, we consider non-parametric modes and MAP estimates for priors that do not admit continuous densities, for which previous approaches based on small ball probabilities fail. We propose a novel definition of generalized modes based on the concept of approximating sequences, which reduce to the classical mode in certain situations that include Gaussian priors but also exist for a more general class of priors. The latter includes the case of priors that impose strict bounds on the admissible parameters and in particular of uniform priors. For uniform priors defined by random series with uniformly distributed coefficients, we show that generalized MAP estimates -- but not classical MAP estimates -- can be characterized as minimizers of a suitable functional that plays the role of a generalized Onsager--Machlup functional. This is then used to show consistency of nonlinear Bayesian inverse problems with uniform priors and Gaussian noise.
\end{abstract}

\section{Introduction}

Uncertainty quantification is concerned with the effects of random perturbations -- characterized by a \emph{prior distribution} -- on mathematical models of real-world situations such as solutions of partial differential equations or on parameter estimation problems (the latter being referred to as \emph{Bayesian inverse problems}). The full goal is a characterization of the \emph{posterior distribution} of the solution as a function of the random perturbation; however, this is in general infeasible due to the high- or even infinite-dimensionality of the posterior. It is therefore necessary to summarize the distribution using, e.g., point estimates. One possibility is the conditional mean, defined as the expected value of the posterior, which involves computing a high-dimensional integral using, e.g., Markov Chain Monte Carlo (MCMC) algorithms. A popular alternative is the \emph{maximum a posteriori (MAP) estimate}, defined as the point maximizing -- in an appropriate sense -- the posterior distribution.
This alternative is attractive due to the fact that its variational characterization in many cases allows its efficient computation as the solution of an optimization problem. In some finite-dimensional settings, it can furthermore be justified from Bayesian decision theory \cite{burgerlucka2014,pereyra2016}. It can also be used to compute Laplace approximations of the posterior density \cite{tierney1986accurate}, which have been used successfully to counteract numerical instabilities that often occur due to the concentration effect of the posterior for highly informative data or small noise; see \cite{schillingsschwab2016, chen2017hessian} and the references therein. In general, the acceleration of existing algorithms by approximations based on the MAP estimate or on second-order information such as the Hessian or Fisher information matrix is an active field of research \cite{petramartinstadlerghattas2014, beckdiaespathquantempone2018}.

Although the underlying model (e.g., a partial differential equation) is often infinite-dimension\-al, numerical computations of course have to rely on finite-dimensional approximations. However, there is a fundamental need to understand whether the problem scales well with increasing dimensions required by increasing accuracy demands and whether the related estimates have well-behaving limits. Such robustness is provided by studying the estimation problem and deriving algorithms for the infinite-dimensional setting that then apply to any (conforming) discretization. However, this is much more involved than in finite dimensions, since the posterior no longer has a natural density representation which significantly complicates the definition and study of the underlying conditional probabilities. In particular, it is not clear in the infinite-dimensional setting whether a MAP estimate exists and whether it has a variational characterization that allows efficient computation. Such results have so far only been obtained under strong assumptions on the prior distribution. Removing or relaxing these assumptions would be of practical relevance since the prior models any information on the uncertainty entering the problem and its correct choice is therefore of fundamental importance in uncertainty quantification. In particular, infinite-dimensional priors that allow for strict bounds on the unknown parameters have so far not been studied.

The goal of this work is to address these issues specifically in the context of Bayesian inverse problems. 
To motivate our contribution, we recall that a Bayesian inverse problem is concerned with quantifying the uncertainty in an unknown $x$ given some prior distribution $\mu_0$ of the probable values of $x$ together with a (possibly indirect) noisy observation $y = y^\delta$ subject to random perturbations; see, e.g., the early work \cite{JF70}, the books \cite{AT05,KS06}, and the more recent works \cite{Lasanen1,Stuart,Dashti2017} as well as the references therein.
It is well-known, e.g., from \cite{Stuart} that under quite general conditions, the posterior distribution $\mu_{\text{post}}$ is absolutely continuous with respect to the prior and is given by the Bayes' formula
\begin{equation}
    \label{eq:post_dens}
    \frac{\di\mu_{\text{post}}}{\di\mu_0}(x|y) = \exp(-\Phi(x; y)),
\end{equation}
where $\Phi$ is the (scaled) negative log-likelihood. 
As in any uncertainty quantification problem, the efficient summary of the information contained in $\mu_{\text{post}}$ is challenging in high dimensions, even if a more explicit characterization is available. In principle, MCMC algorithms can be applied to approximate the conditional mean using \eqref{eq:post_dens}, but the computational effort required by MCMC algorithms in large-scale problems is significant. Correspondingly, there has been interest in how approximate Bayesian methods (see, e.g., \cite{cotter2010approximation, LSW17a, schwab2012sparse, MarzoukNajm2009, marzouk2007stochastic}) can be used 
effectively in inverse problems. As mentioned above, a central tool in this research effort is the MAP estimate, which can be formulated in a finite-dimensional setting as the solution to the optimization problem
\begin{equation}
    \label{eq:var_map}
    \xmap = \argmin_x \left\{\Phi(x; y) + R(x)\right\},
\end{equation}
where $R$ is the negative log-prior density. 
The study of infinite-dimensional or \emph{non-parametric} MAP estimates -- or more generally, non-parametric \emph{modes} -- was only recently initiated by Dashti et al. in \cite{dashti2013map} in the context of nonlinear Bayesian inverse problems with Gaussian prior and noise distribution. Due to the lack of a Lebesgue measure on infinite-dimensional spaces and, with it, a natural density of the posterior distribution, the definition of a mode had to be generalized.
Rather than looking for maxima of the density function, one considers points for which small balls around this point have asymptotically maximal probability as their radius tends to zero. Since the maximal probability is also defined with respect to the distribution, this definition avoids the need for densities; see \cref{def:mode} below.
In the Gaussian case, one can then show an explicit relation between the objective functional in \eqref{eq:var_map} and the \emph{Onsager--Machlup functional} defined via the limit of small ball probabilities. This can be used to give a statistical justification for the objective functional that is consistent with the finite-dimensional definition \cite{dashti2013map}.  
More recently, there has been a series of results \cite{helin2015maximum, DS16, agapiou2018sparsity} that extend the definition and scope of non-parametric MAP estimation. For example, weak MAP estimates were proposed and studied in \cite{helin2015maximum} in the context of linear Bayesian inverse problems with a general class of priors. The authors used the tools from the differentiation and quasi-invariance theory of measures (developed by Fomin and Skorokhod and discussed in detail in \cite{Boga10}) to connect the zero points of the logarithmic derivative of a measure to the minimizers of the Onsager--Machlup functional. 

This program of generalization is motivated by the fact that in inverse problems, the prior is not only important for posterior modeling but also plays a key role in the successful stabilization of the inherent ill-posedness of the problem \cite{Stuart}. 
The prior thus needs to be carefully designed to reflect the best possible subjective information available; here we only mention priors that reflect the sparsity \cite{lassas2009discretization, kolehmainen2012sparsity}, hierarchical structure \cite{wang2004hierarchical, calvetti2008hypermodels}, or anisotropic features \cite{kaipio1999inverse} of the unknown.
However, in the non-parametric case, all results regarding MAP estimates known to the authors require continuity of the prior; but even simple one-dimensional measures with discontinuous density can fail to have a mode in terms of the definition given in \cite{dashti2013map}; see \cref{ex:standard} below. 
This is limiting, since suitable prior modeling may involve imposing strict bounds on the admissible values of $x$ emerging from some fundamental properties of the application. As a simple example, consider the classical inverse problem of X-ray medical imaging \cite{mueller2012linear}. Since it is reasonable to assume that there are no radiation sources inside the patient, the attenuation of X-rays is positive throughout the body. 
A related situation occurs in electrical impedance tomography -- and more generally, in parameter estimation problems for partial differential equations -- where pointwise upper and lower bounds need to be imposed on the parameter to ensure well-posedness of the forward problem; reasonable bounds are often available from a priori information on, e.g., the kinds of tissue or material expected in the region of interest. 
Furthermore, it is known in the deterministic theory that restriction of the unknown to a compact set can stabilize an inverse problem without additional (possibly undesired) regularization terms; this is sometimes referred to as \emph{quasi-solution} or \emph{Ivanov regularization} \cite{Iva62,IvaVasTan02,LorWor13,NeuRam14}, and the use of pointwise bounds for this purpose has recently been studied in \cite{KK17,CK18}. 
In the Bayesian approach, this would correspond to priors with densities whose mass is contained in a compact set.
Together, this motivates the study of imposing such hard bounds as part of the prior in Bayesian inverse problems.

The main contribution of our work is therefore two-fold: First, we introduce a novel definition of \emph{generalized modes} for probability measures and characterize conditions under which our definition coincides with the previous definition of modes given in \cite{dashti2013map} (called \emph{strong modes} in the following). 
More precisely, we show that if the Radon--Nikodym derivative $d\mu(\cdot - h)/d\mu$ of the translated measure $\mu$ has certain equicontinuity properties (described in \cref{subsec:gen_properties}) over
a dense set of translations $h$, any generalized mode is also a strong mode.
In particular, we demonstrate that our definition for generalized modes does not introduce pathological modes in the case of continuous densities and that strong and generalized modes coincide for Gaussian measures.
Second, we consider \emph{uniform priors} defined via the random series
\begin{equation}
    \label{eq:uniform}
    \xi = \sum_{k=1}^\infty \gamma_k \xi_k \phi_k,
\end{equation}
where $\xi_k \sim \mathcal{U}[-1,1]$ are uniformly distributed and $\gamma_k$ are suitable weights; such priors clearly have a discontinuous (if any) density.
We further show that the uniform prior \eqref{eq:uniform} does not have any strong mode that touches the bounds (i.e., where $\xi_k\in\{-1,1\}$ for some $k\in\N$) and that this implies that the posterior also does not have a strong mode in general. However, we prove that such points are in fact generalized modes of the prior. 
Furthermore, we show that for the uniform prior defined via \eqref{eq:uniform}, the corresponding generalized MAP estimates of \eqref{eq:post_dens} can be characterized in a natural manner as minimizers of \eqref{eq:var_map}. We also provide a weak consistency result regarding the generalized MAP estimates in line of the previous work \cite{dashti2013map, agapiou2018sparsity}. 

\bigskip

The paper is organized as follows.
In \cref{sec:modes}, we give our definition of generalized modes and illustrate the definition for the examples of a measure with discontinuous density (which does not admit a strong mode) and of Gaussian measures (where generalized and strong modes coincide). A general investigation of conditions for the generalized and strong modes to coincide is carried out in \cref{sec:relation}. We next construct uniform priors on an infinite-dimensional Banach space and characterize its generalized modes in \cref{sec:prior}. In \cref{sec:MAP}, we study Bayesian inverse problems with such priors and derive a variational characterization of generalized modes that plays the role of a generalized Onsager--Machlup functional. This is used in \cref{sec:consistency} to show consistency of nonlinear Bayesian inverse problems with uniform priors and Gaussian noise.

\section{Generalized modes}\label{sec:modes}

Let $X$ be a separable Banach space and $\mu$ a probability measure on $X$.
Throughout, let $B^\delta(x) \subset X$ denote the open ball around $x \in X$ with radius $\delta$ and let
\begin{equation*}
    M^\delta := \sup_{x\in X} \mu(B^\delta(x)) 
\end{equation*}
for each $\delta > 0$ denote the maximal probability of a ball of radius $\delta$ under $\mu$. We first recall the definition of a mode introduced in \cite{dashti2013map}.
\begin{definition} \label{def:mode}
    A point $\hat{x} \in X$ is called a \emph{(strong) mode} of $\mu$ if
    \begin{equation*} 
        \lim_{\delta \to 0} \frac{\mu(B^\delta(\hat{x}))}{M^\delta} = 1. 
    \end{equation*}
    The modes of the posterior measure $\mu^y$ are called \emph{maximum a posteriori (MAP) estimates}.
\end{definition}
This definition compares the rate in which the probability of a small ball around the point $\hat{x}$ decreases to the 
rate achieved by choosing every ball to maximize its probability, which is the lowest rate than can be achieved.
If these rates agree asymptotically, then $\hat{x}$ is a mode. Intuitively, a mode maximizes the probability distribution in the sense that asymptotically, balls of a fixed radius around it contain maximal probability.
Compared to the classical definition, it avoids reference to both point evaluations and densities, which makes it appropriate in the infinite-dimensional setting.
Also note that this is a global definition and hence disregards local modes.

It is straightforward to construct a probability distribution with discontinuous Lebesgue density which does not have a mode according to \cref{def:mode}.
\begin{example} \label{ex:standard}
    Let $\mu$ be a probability measure on $\R$ with density $p$ with respect to the Lebesgue measure, defined via
    \begin{equation*}
        \tilde p(x) := \begin{cases}
            1 - x & \text{if }x \in [0,1], \\
            0 & \text{otherwise},
        \end{cases}
        \qquad
        p(x) := \frac{\tilde p(x)}{\int_{\R} \tilde p(x)\,dx}.
    \end{equation*}
    Clearly, $\hat x = 0$ maximizes $p$; however, it is not a strong mode.
    First, note that for every $\delta > 0$,
    \begin{equation*} 
        \mu(B^\delta(\delta)) = \sup_{x\in\R} \mu(B^\delta(x)) = M^\delta. 
    \end{equation*}
    Hence of all balls with radius $\delta$, the one around $x^\delta := \delta$ has the highest probability.
    However, although $x^\delta\to \hat{x}$ we have for $\delta$ small enough that
    \begin{equation*}
        \lim_{\delta \to 0}  \frac{\mu(B^\delta(0))}{\mu(B^\delta(\delta))}
        =\lim_{\delta \to 0}  \frac{\delta(1 - \frac12\delta)}{2\delta(1 - \delta)} = \frac 12 < 1,
    \end{equation*}
    and thus \cref{def:mode} is not satisfied.
\end{example}
The above example illustrates the problem with classical modes for discontinuous densities: Since the point $\hat x$ lies at the discontinuity of the density, \emph{any} ball of radius $\delta$ around $\hat{x}$ has a mass of at most $\delta$ (as opposed to a maximal possible mass of roughly $2\delta$), and this loss of mass is conserved in the limit.
We do, however, have a family $\{B^\delta(\delta)\}_{\delta > 0}$ of balls that each have maximal probability and whose center points converge toward $\hat{x}$.
This gives rise to the idea of replacing the fixed center point $\hat{z}$ in the definition of a mode by an ``approximating sequence'' $\{w_\delta\}_{\delta > 0}$ that converges to $\hat{x}$ as $\delta \to 0$.
(Similar limiting arguments also serve as the basis of constructions of generalized derivatives for non-differentiable functions such as Clarke's generalized directional derivative or Mordukhovich's limiting subdifferential.)

We are thus lead to the following definition.
\begin{definition} \label{def:gmode}
    A point $\hat x \in X$ is called a \emph{generalized mode} of $\mu$ if for every sequence $\{\delta_n\}_{n\in\N} \subset (0,\infty)$ with $\delta_n \to 0$ there exists an \emph{approximating sequence} $\{w_n\}_{n\in\N} \subset X$ with $w_n \to \hat x$ in $X$ and
    \begin{equation}
        \label{eq:gmode_conv}
        \lim_{n \to \infty} \frac{\mu(B^{\delta_n}(w_n))}{M^{\delta_n}} = 1.
    \end{equation}
    We call generalized modes of the posterior measure $\mu^y$ \emph{generalized MAP estimates}.
\end{definition}
Note that by \cref{def:gmode}, every strong mode $\hat x \in X$ is also a generalized mode with the approximating sequence $w_n := \hat x$.
Also note that it is not necessary for the balls $B^{\delta_n}(w_n)$ to each have maximal probability; their probabilities only need to have the same asymptotic behavior as the maximal ball probabilities $M^{\delta_n}$.

In \cref{ex:standard}, $\hat x = 0$ is a generalized mode with the approximating sequence $w_n := \delta_n$ for any positive sequence $\{\delta_n\}_{n\in\N}$ with $\delta_n \to 0$. In fact, we can use the following stronger condition to show that a point is a generalized mode.
\begin{lemma} \label{prop:curvecond}
    Let $\hat{x} \in X$. If there is a family $\{w^\delta\}_{\delta > 0} \subset X$ such that $w^\delta \to \hat{x}$ in $X$ as $\delta \to 0$ and
    \begin{equation*}
        \lim_{\delta \to 0} \frac{\mu(B^\delta(w^\delta))}{M^\delta} = 1,
    \end{equation*}
    then $\hat{x}$ is a generalized mode of $\mu$.
\end{lemma}
In \cref{ex:standard}, this condition is obviously satisfied for $\hat x = 0$ and $w^\delta := \delta$.

\begin{remark}
    \Cref{def:gmode} can be further generalized by allowing for weakly converging approximating sequences. Conversely, we can restrict \cref{def:gmode} by coupling the convergence of $w_n$ to that of $\delta_n$ (cf.~\cref{prop:curvecond}). As we will show below, our definition has the advantage of not introducing pathological modes in the case of continuous densities (such as Gaussian or Besov distributions) while for the important case of a uniform prior, it introduces modes that are natural in terms of the variational characterization \eqref{eq:var_map}. 
\end{remark}

We now illustrate some of the key ideas of the generalized mode by considering the case of a Gaussian measure $\mu$ on $X$.
We assume that $\mu$ is centered and note that the results below trivially generalize to the non-centered case.
We will require the following quantitative estimate for Gaussian ball probabilities.
\begin{lemma}[\protect{\cite[Lem.~3.6]{dashti2013map}}]\label{lem:qualest}
    Let $x\in X$ and $\delta > 0$ be given. Then there exists a constant $a_1 > 0$ independent of $x$ and $\delta$ such that
    \begin{equation*}
        \frac{\mu(B^\delta(x))}{\mu(B^\delta(0))} \le e^{\frac{a_1}{2}\delta^2}e^{-\frac{a_1}{2}\left(\norm{x}_X - \delta\right)^2}.
    \end{equation*}
\end{lemma}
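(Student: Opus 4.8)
The plan is to derive an upper bound on $\mu(B^\delta(x))$ that \emph{keeps} the small-ball probability $\mu(B^\delta(0))$ in the denominator, by combining the Cameron--Martin theorem (to shift the ball along one distinguished direction) with Anderson's inequality (to absorb the remaining transversal shift). Two difficulties must be anticipated. First, naive symmetrization of the Cameron--Martin density produces only a \emph{lower} bound of the form $\mu(B^\delta(x)) \ge e^{-\|x\|_E^2/2}\mu(B^\delta(0))$, while a crude Gaussian tail bound for the numerator discards the factor $\mu(B^\delta(0))$ altogether. Second, the statement is phrased in the ambient norm $\|x\|_X$, whereas Cameron--Martin naturally produces the Cameron--Martin norm. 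Both are resolved by adapting the shift direction to $x$.

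First I would fix a norming functional: by Hahn--Banach choose $\ell \in X^*$ with $\|\ell\|_{X^*} = 1$ and $\ell(x) = \|x\|_X =: r$. Let $\sigma^2 := \int_X \ell(y)^2\,d\mu(y)$ denote the variance of $\ell$ under $\mu$, and let $w := R\ell \in E$ be the image of $\ell$ under the covariance operator $R$, so that the Paley--Wiener functional of $w$ is $\ell$ itself, $\|w\|_E^2 = \ell(R\ell) = \sigma^2$, and $u := w/\sigma$ satisfies $\|u\|_E = 1$ and $\hat u = \ell/\sigma$. Writing $x = x^\perp + su$ with $s := r/\sigma$ and $x^\perp := x - su$, one checks $\ell(x^\perp) = 0$, so that every $y \in B^\delta(x^\perp)$ satisfies $|\ell(y)| = |\ell(y - x^\perp)| \le \|\ell\|_{X^*}\|y - x^\perp\|_X < \delta$.

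The main computation shifts the ball $B^\delta(x^\perp)$ by the Cameron--Martin vector $su$. By the Cameron--Martin formula $\mu(A + su) = \int_A \exp\bigl(-s\hat u(y) - \tfrac12 s^2\|u\|_E^2\bigr)\,d\mu(y)$ applied to $A = B^\delta(x^\perp)$ (noting $B^\delta(x^\perp) + su = B^\delta(x)$), together with the bound $-\tfrac{s}{\sigma}\ell(y) \le \tfrac{s}{\sigma}\delta$ just established, this gives $\mu(B^\delta(x)) \le e^{s\delta/\sigma - s^2/2}\,\mu(B^\delta(x^\perp))$. Since $B^\delta(0)$ is symmetric and convex, Anderson's inequality yields $\mu(B^\delta(x^\perp)) = \mu(B^\delta(0) + x^\perp) \le \mu(B^\delta(0))$; substituting $s = r/\sigma$ turns the exponent into $\tfrac{1}{\sigma^2}\bigl(r\delta - \tfrac12 r^2\bigr) = \tfrac{1}{2\sigma^2}\bigl(\delta^2 - (r - \delta)^2\bigr)$, which is exactly the claimed bound with $a_1 = 1/\sigma^2$.

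Two final points make the constant uniform and cover the degenerate regime. To remove the dependence of $\sigma = \sigma_\ell$ on $x$, I would set $a_1 := 1/\sigma_*^2$ with $\sigma_*^2 := \sup_{\|\ell\|_{X^*}=1}\int \ell(y)^2\,d\mu(y) \le \int_X \|y\|_X^2\,d\mu(y)$, finite by Fernique's theorem; since the exponent content $r\delta - \tfrac12 r^2$ is nonpositive precisely when $\|x\|_X \ge 2\delta$, enlarging $\sigma_\ell^2$ to $\sigma_*^2$ only increases the right-hand side there and is therefore admissible. The complementary case $\|x\|_X \le 2\delta$, where the right-hand side exponent $\tfrac{a_1}{2}\bigl(\delta^2 - (\|x\|_X - \delta)^2\bigr)$ is nonnegative, follows directly from Anderson's inequality $\mu(B^\delta(x)) \le \mu(B^\delta(0))$. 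I expect the main conceptual obstacle to be the first step: recognizing that the correct shift is along $u = R\ell/\sigma$, for which the logarithmic Cameron--Martin density collapses to the single functional $\ell$ that \emph{is} controlled on the small ball, while the genuinely infinite-dimensional transversal displacement $x^\perp$ is handled by the soft, norm-based Anderson inequality rather than by any density estimate.
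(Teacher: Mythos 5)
The paper does not prove \cref{lem:qualest} at all -- it is imported verbatim from \cite[Lem.~3.6]{dashti2013map} -- so there is no internal proof to compare against; your argument is a correct, self-contained reconstruction, and it follows essentially the same route as the cited source (norming functional, factorization of the shift into a one-dimensional Cameron--Martin direction $R\ell/\sigma$ plus a transversal part killed by Anderson's inequality). The algebra checks out: $\ell(u)=\sigma$ gives $\ell(x^\perp)=0$, the density $\exp(-s\hat u(y)-\tfrac12 s^2)$ is bounded by $\exp(s\delta/\sigma-s^2/2)$ on $B^\delta(x^\perp)$ because $|\ell(y)|<\delta$ there, and $s\delta/\sigma-s^2/2=\tfrac{1}{2\sigma^2}\bigl(\delta^2-(r-\delta)^2\bigr)$ recovers the stated form with $a_1=1/\sigma^2$; the passage to the uniform constant $a_1=1/\sigma_*^2$ is legitimate because the exponent is nonpositive exactly on the regime $\norm{x}_X\ge 2\delta$ where you invoke it, and the complementary regime is trivial by Anderson. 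The only point you leave implicit is that $\sigma_\ell$ could vanish for a degenerate $\mu$; this costs nothing, since then $\mu$ is supported in $\ker\ell$, which is at distance $\norm{x}_X>2\delta$ from $x$ in that regime, so $\mu(B^\delta(x))=0$ and the bound is trivial -- but it is worth a sentence.
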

Now we can show that for a centered Gaussian measure $\mu$, \cref{def:mode,def:gmode} do indeed coincide.
\begin{theorem}
    \label{thm:gaussian_0}
    The origin $0 \in X$ is both a strong mode and a generalized mode of $\mu$, whereas all $x \in X \setminus \{0\}$ are neither a strong mode nor a generalized mode.
\end{theorem}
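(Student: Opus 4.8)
The plan is to derive both assertions from the single quantitative bound in \cref{lem:qualest}, without invoking Anderson's inequality. The key observation is that, after supremizing, \cref{lem:qualest} controls not only individual ball probabilities but also the maximal ball probability $M^\delta$ in terms of $\mu(B^\delta(0))$.

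First I would show that the origin is a strong mode. Taking the supremum over $x \in X$ on both sides of the inequality in \cref{lem:qualest} and noting that the factor $e^{-\frac{a_1}{2}(\norm{x}_X - \delta)^2}$ attains its maximal value $1$ exactly when $\norm{x}_X = \delta$, I obtain
\begin{equation*}
    M^\delta = \sup_{x \in X} \mu(B^\delta(x)) \le e^{\frac{a_1}{2}\delta^2}\,\mu(B^\delta(0)).
\end{equation*}
Together with the trivial bound $\mu(B^\delta(0)) \le M^\delta$ this yields
\begin{equation*}
    e^{-\frac{a_1}{2}\delta^2} \le \frac{\mu(B^\delta(0))}{M^\delta} \le 1,
\end{equation*}
and sending $\delta \to 0$ forces the quotient to $1$. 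Hence $0$ is a strong mode, and as observed after \cref{def:gmode} it is therefore also a generalized mode.

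To handle $x \neq 0$, I would use that every strong mode is a generalized mode, so it suffices to prove that no such $x$ is a generalized mode (which then rules out the strong case as well). Fix $x \neq 0$, set $r := \norm{x}_X > 0$, and combine \cref{lem:qualest} with $\mu(B^\delta(0)) \le M^\delta$ to get, for every $w \in X$,
\begin{equation*}
    \frac{\mu(B^\delta(w))}{M^\delta} \le \frac{\mu(B^\delta(w))}{\mu(B^\delta(0))} \le e^{\frac{a_1}{2}\delta^2}\, e^{-\frac{a_1}{2}(\norm{w}_X - \delta)^2}.
\end{equation*}
For any sequence $\delta_n \to 0$ and any approximating sequence $w_n \to x$ we have $\norm{w_n}_X \to r$, so the right-hand side tends to $e^{-\frac{a_1}{2}r^2} < 1$. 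Consequently $\limsup_{n\to\infty} \mu(B^{\delta_n}(w_n))/M^{\delta_n} \le e^{-\frac{a_1}{2}r^2} < 1$ regardless of the choice of $w_n$, so \eqref{eq:gmode_conv} fails for every candidate approximating sequence; thus $x$ is neither a generalized nor a strong mode.

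The only step requiring genuine insight is the first displayed inequality: realizing that supremizing the bound of \cref{lem:qualest} estimates $M^\delta$ from above by $\mu(B^\delta(0))$ up to the harmless factor $e^{\frac{a_1}{2}\delta^2} \to 1$. Once this link between $M^\delta$ and $\mu(B^\delta(0))$ is in hand, both parts reduce to elementary squeeze and $\limsup$ arguments.
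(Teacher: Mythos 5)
Your proof is correct, and for the first assertion it takes a genuinely different route from the paper's. The paper invokes Anderson's inequality to obtain the exact identity $M^\delta = \mu(B^\delta(0))$, which makes the ratio $\mu(B^\delta(0))/M^\delta$ identically equal to $1$ and settles the strong-mode claim immediately; it then reuses this identity in the negative part to rewrite the denominator $M^{\delta_n}$ as $\mu(B^{\delta_n}(0))$ before applying \cref{lem:qualest}. You instead extract everything from \cref{lem:qualest} alone: supremizing its bound over $x$ (using that the Gaussian factor is at most $1$) gives $M^\delta \le e^{a_1\delta^2/2}\,\mu(B^\delta(0))$, and the squeeze $e^{-a_1\delta^2/2} \le \mu(B^\delta(0))/M^\delta \le 1$ yields the strong-mode property asymptotically rather than exactly --- which is all \cref{def:mode} requires. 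For $x \neq 0$ you replace Anderson's identity by the trivial bound $\mu(B^\delta(0)) \le M^\delta$, and your $\limsup$ formulation along an arbitrary approximating sequence is a slightly cleaner packaging of the paper's contradiction argument with the explicit constants $\delta_0 = \tfrac14\norm{x}_X$ and $A$. What your approach buys is economy of tools: the entire theorem rests on the single quantitative lemma, with no separate appeal to Anderson's inequality. What the paper's approach buys is the sharper structural fact $M^\delta = \mu(B^\delta(0))$, which is exact rather than asymptotic and of independent interest. Both arguments are complete and valid.
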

\begin{proof}
    First, by Anderson's inequality (see, e.g., \cite[Thm.~2.8.10]{Bogachev:1998}) we have that
    \begin{equation*}
        \mu(B^\delta(x)) \le \mu(B^\delta(0))
    \end{equation*}
    for all $x \in X$ and $\delta > 0$ and hence that
    $M^\delta = \mu(B^\delta(0))$ for all $\delta > 0$. 
    This immediately yields that $\hat x = 0$ is a strong mode and hence also a generalized mode for $\mu$.

    Now assume that $x \in X \setminus \{0\}$ is a generalized mode of $\mu$. Let $\{\delta_n\}_{n\in\N} \subset (0,\infty)$ with $\delta \to 0$ be arbitrary and let $\{w_n\}_{n\in\N} \subset X$ be the respective approximating sequence.
    Then, \cref{lem:qualest} yields
    \begin{equation*} 
        \frac{\mu(B^\delta(w))}{\mu(B^\delta(0))} 
        \le e^{-\frac{a_1}{2}\norm{w}_X(\norm{w}_X - 2\delta)} 
        \le e^{-\frac{a_1}{2}\left(\frac34\norm{x}_X\right)\left(\frac14\norm{x}_X\right)}
        = e^{-\frac{3a_1}{32}\norm{x}_X^2} =: A < 1 
    \end{equation*}
    for all $\delta \in (0, \delta_0]$ and $w \in B^{\delta_0}(x)$ with $\delta_0 := \frac14\norm{x}_X$.
    We choose $n_0 \in \N$ large enough such that $\norm{w_n - x}_X < \delta_0$ and $\delta_n < \delta_0$ for all $n \ge n_0$.
    Now taking the limit in the above yields
    \begin{equation*}
        \lim_{n\to\infty} \frac{\mu(B^{\delta_n}(w_n))}{M^{\delta_n}} 
        = \lim_{n\to\infty} \frac{\mu(B^{\delta_n}(w_n))}{\mu(B^{\delta_n}(0))}
        \le A < 1,
    \end{equation*}
    which is a contradiction. Hence, $x$ is not a generalized mode and thus cannot be a strong mode, either.
\end{proof}

The explicit bound in \cref{lem:qualest} plays an important role in \cref{thm:gaussian_0}. Compare this to the alternative approach using the \emph{Onsager--Machlup functional} $I(x)=\frac12\norm{x}_E^2$, defined as satisfying
\begin{equation*}
    \lim_{\delta \to 0} \frac{\mu(B^\delta(x_1))}{\mu(B^\delta(x_2))} = \exp\left( \frac12\norm{x_2}_E^2 - \frac12\norm{x_1}_E^2 \right),
\end{equation*}
which holds for all $x_1, x_2$ from the Cameron--Martin space $E \subset X$ of $\mu$ by \cite[Prop.~18.3]{Lifsic:1995}. Although this relation can be used to show that $0$ is the only generalized MAP estimate in $E$, it does not yield any information regarding $X \setminus E$. In the next section, we will for general measures $\mu$ -- under additional continuity assumptions -- extend results from a dense space such as $E$ to the whole space $X$.

\section{Relation between generalized and strong modes}\label{sec:relation}

In this section, we derive conditions under which our definition of generalized modes coincides with the standard notion of strong modes. We do this by based on further characterizations of the convergence of the approximating sequence in the definition of generalized modes.

\subsection{Characterization by rate of convergence}
\label{subsec:gen_properties}

Consider a general probability measure $\mu$ on $X$.
Let us first make the fundamental observation that for $\delta_n > 0$ and $\hat{x}, w_n \in X$, we have that
\begin{equation}
    \frac{\mu(B^{\delta_n}(\hat x))}{\sup_{x \in X} \mu(B^{\delta_n}(x))}
    = \frac{\mu(B^{\delta_n}(\hat x))}{\mu(B^{\delta_n}(w_n))} \cdot \frac{\mu(B^{\delta_n}(w_n))}{\sup_{x \in X} \mu(B^{\delta_n}(x))}.
    \label{eq:fundobs}
\end{equation}
Clearly, if $\hat x$ is a generalized mode, we have control over the right-most ratio in \eqref{eq:fundobs}. On the other hand, convergence of the ratio on the left-hand side in \eqref{eq:fundobs} is related to the definition of a strong mode. This leads to the following equivalence.
\begin{theorem} \label{thm:limit_condition}
    Let $\hat x \in X$ be a generalized mode of $\mu$. 
    Then $\hat x$ is a strong mode if and only if for every sequence $\{\delta_n\}_{n\in\N} \subset (0,\infty)$ with $\delta_n \to 0$, there exists an approximating sequence $\{w_n\}_{n\in\N} \subset X$ with $w_n \to \hat x$ and
    \begin{equation}
        \label{eq:property}
        \lim_{n \to \infty} \frac{\mu(B^{\delta_n}(\hat x))}{\mu(B^{\delta_n}(w_n))} = 1.
    \end{equation}
\end{theorem}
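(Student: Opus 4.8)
The plan is to derive both implications directly from the multiplicative identity \eqref{eq:fundobs}, after first recasting the strong-mode condition of \cref{def:mode} in sequential terms. Note that $\mu(B^\delta(\hat x)) \le M^\delta$ by the definition of $M^\delta$, and that $M^\delta > 0$ for every $\delta > 0$: since $X$ is separable, it is a countable union of $\delta$-balls, at least one of which must carry positive $\mu$-mass. Hence the quotient $\mu(B^\delta(\hat x))/M^\delta$ is a well-defined number in $[0,1]$, and the strong-mode requirement $\lim_{\delta\to 0}\mu(B^\delta(\hat x))/M^\delta = 1$ is equivalent to the assertion that $\lim_{n\to\infty}\mu(B^{\delta_n}(\hat x))/M^{\delta_n} = 1$ for every positive null sequence $\{\delta_n\}_{n\in\N}$. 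I would use this sequential characterization throughout, so that the strong-mode property and \cref{def:gmode} are phrased over the same family of null sequences.

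For the ``only if'' direction I would simply take the constant approximating sequence $w_n := \hat x$. If $\hat x$ is a strong mode, then $\{w_n\}_{n\in\N}$ trivially converges to $\hat x$ and satisfies \eqref{eq:gmode_conv} by the sequential form of the strong-mode property, so it is a genuine approximating sequence; moreover \eqref{eq:property} holds because every quotient equals $1$ identically. Thus the right-hand condition is verified for each null sequence, and this direction is essentially immediate.

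The substance lies in the ``if'' direction. Here I would fix an arbitrary positive null sequence $\{\delta_n\}_{n\in\N}$ and let $\{w_n\}_{n\in\N}$ be the approximating sequence furnished by the hypothesis. The crucial point is that, because the term \emph{approximating sequence} already carries the meaning of \cref{def:gmode}, this \emph{single} sequence simultaneously satisfies $\lim_{n\to\infty}\mu(B^{\delta_n}(w_n))/M^{\delta_n} = 1$ (from \eqref{eq:gmode_conv}) and \eqref{eq:property}. Inserting $w_n$ into the factorization \eqref{eq:fundobs} and passing to the limit, the left-hand quotient is the product of the two factors, each converging to $1$; hence $\lim_{n\to\infty}\mu(B^{\delta_n}(\hat x))/M^{\delta_n} = 1$. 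Since $\{\delta_n\}_{n\in\N}$ was arbitrary, the sequential characterization shows that $\hat x$ is a strong mode.

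I expect the only genuine obstacle to be bookkeeping rather than mathematics: one must ensure that the \emph{same} sequence $\{w_n\}_{n\in\N}$ appears in both factors of \eqref{eq:fundobs}, which is exactly what the convention that $\{w_n\}_{n\in\N}$ is an approximating sequence satisfying \eqref{eq:property} provides, and one must confirm that the quotients are well defined. The latter holds because $M^{\delta_n} > 0$ always and because $\mu(B^{\delta_n}(w_n)) > 0$ for all sufficiently large $n$ (otherwise the ratio in \eqref{eq:gmode_conv} could not tend to $1$), so the passage to the limit of the product in \eqref{eq:fundobs} is legitimate for large $n$, which is all the limit requires.
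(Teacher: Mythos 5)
Your proposal is correct and follows essentially the same route as the paper: both directions rest on the factorization \eqref{eq:fundobs}, with the ``if'' direction multiplying the two convergent factors exactly as in the paper's proof. The only (immaterial) difference is in the ``only if'' direction, where you exhibit the constant sequence $w_n := \hat x$ as the witness, while the paper shows the stronger fact that \eqref{eq:property} holds for \emph{every} approximating sequence; your added checks that $M^\delta > 0$ and that $\mu(B^{\delta_n}(w_n)) > 0$ for large $n$ are correct and slightly more careful than the paper's.
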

\begin{proof}
    Let $\{\delta_n\}_{n\in\N}$ be a positive sequence with $\delta_n \to 0$ and let $\{w_n\}_{n\in\N}$ be the corresponding approximating sequence that satisfies \eqref{eq:property}.
    Then
    \begin{equation*}
        \lim_{n\to\infty} \frac{\mu(B^{\delta_n}(\hat x))}
        {\sup_{x \in X} \mu(B^{\delta_n}(x))}
        = \lim_{n\to\infty} \frac{\mu(B^{\delta_n}(\hat x))}{\mu(B^{\delta_n}(w_n))} 
        \cdot \lim_{n\to\infty} \frac{\mu(B^{\delta_n}(w_n))}
        {\sup_{x \in X} \mu(B^{\delta_n}(x))}
        = 1
    \end{equation*}
    and hence $\hat x$ is a strong mode.

    Conversely, assume that $\hat x$ is a strong mode. Let
    $\{\delta_n\}_{n\in\N}$ be a sequence such that
    $\delta_n \to 0$ and let $\{w_n\}_{n\in\N}$ be any
    approximating sequence. 
    Then 
    \begin{equation*}
        \lim_{n\to\infty} \frac{\mu(B^{\delta_n}(\hat x))}{\mu(B^{\delta_n}(w_n))}
        = \lim_{n\to\infty} \frac{\mu(B^{\delta_n}(\hat x))}
        {\sup_{x \in X} \mu(B^{\delta_n}(x))}
        \cdot \lim_{n\to\infty} \frac{\sup_{x \in X} \mu(B^{\delta_n}(x))}
        {\mu(B^{\delta_n}(w_n))}
        = 1.
    \end{equation*}
    Hence \eqref{eq:property} holds.
\end{proof}

The following example shows that even in the finite-dimensional case, condition~\eqref{eq:property} can be
satisfied for probability measures with discontinuous densities at $\hat
x$.
\begin{example}
    Let $\lambda$ be the distribution of the standard two-dimensional
    Gaussian random variable. Let $\hat x = (0,0)$ be the origin and 
    choose any $\beta > 1$, e.g., $\beta = 2$, and define the cusp 
    \[
        C = \{ (x_1,x_2) \colon x_1 > 0, \abs{x_2} \le x_1^\beta \}.
    \]
    Next we define the probability measure $\mu$ as
    \[
        \mu(A) := \frac{\lambda(A \setminus C)}{\lambda(\R^2 \setminus C)}.
    \]
    Note that $\mu$ has a density which is discontinuous at $\hat x$. However,
    we note two simple facts. First, removing a cusp doesn't change the
    scaled measure much:
    \[
        \mu(B^\delta(\hat x)) = \frac{\lambda(B^\delta(\hat x))}{\lambda(\R^2
        \setminus C)} + O (\delta^{\beta + 1}).
    \]
    Secondly, we have the trivial estimate
    \[
        M^\delta = \sup_{x \in X} \mu(B^\delta(x)) \le \frac{\lambda(B^\delta(\hat x))} {\lambda(\R^2
        \setminus C)}.
    \]
    Therefore, for a certain constant $c > 0$ we have an estimate
    \[
        \frac{\mu(B^\delta(\hat x))}{M^\delta} \ge 1 - c \delta^{\beta -1},
    \]
    since $\lambda(B^{\delta}(\hat x)) = \frac12 \delta^2 + 
    O(\delta^3)$. Taking $\delta \to 0$ thus implies that $\hat x$ is a
    strong mode of $\mu$. Hence by \cref{thm:limit_condition}, the
    condition~\eqref{eq:property} holds.

    We remark that removing a cusp could not be replaced
    by removing a cone to obtain the same example. By a similar
    calculation, it is straightforward to verify that when removing a
    cone instead of a cusp, then regardless of the angle of the cone,
    $\hat x$ is a generalized mode that is not a strong mode for the 
    resulting probability measure.
\end{example}

We can use \cref{thm:limit_condition} to restrict the sets over which the supremum in the definition of $M^\delta$ is taken.
\begin{corollary}
    Let $\hat x \in X$ be a generalized mode of $\mu$. If there exists an $r > 0$ such that
    \begin{equation*}
        \lim_{\delta \to 0} \frac{\mu(B^\delta(\hat x))}{\sup_{w \in B^r(\hat x)} \mu(B^\delta(w))} = 1,
    \end{equation*}
    then $\hat x$ is a strong mode.
\end{corollary}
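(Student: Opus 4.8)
The plan is to invoke \cref{thm:limit_condition}: since $\hat x$ is already assumed to be a generalized mode, it suffices to produce, for an arbitrary sequence $\{\delta_n\}_{n\in\N} \subset (0,\infty)$ with $\delta_n \to 0$, an approximating sequence $\{w_n\}_{n\in\N}$ with $w_n \to \hat x$ for which $\mu(B^{\delta_n}(\hat x))/\mu(B^{\delta_n}(w_n)) \to 1$. I would simply take $\{w_n\}$ to be the very approximating sequence furnished by the definition of a generalized mode, so that $w_n \to \hat x$ and $\mu(B^{\delta_n}(w_n))/M^{\delta_n} \to 1$, and then verify that this sequence also satisfies \eqref{eq:property}.

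Write $S^\delta := \sup_{w \in B^r(\hat x)} \mu(B^\delta(w))$ for the restricted supremum appearing in the hypothesis. Since $\hat x \in B^r(\hat x) \subseteq X$, one has the elementary chain $\mu(B^\delta(\hat x)) \le S^\delta \le M^\delta$ for every $\delta > 0$. The key observation is that because $w_n \to \hat x$, the approximating points eventually lie in $B^r(\hat x)$; hence $\mu(B^{\delta_n}(w_n)) \le S^{\delta_n} \le M^{\delta_n}$ for all large $n$. Dividing by $M^{\delta_n}$ and using $\mu(B^{\delta_n}(w_n))/M^{\delta_n} \to 1$, a squeeze then forces both $S^{\delta_n}/M^{\delta_n} \to 1$ and $\mu(B^{\delta_n}(w_n))/S^{\delta_n} \to 1$.

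With these two limits in hand, I would factor
\[
    \frac{\mu(B^{\delta_n}(\hat x))}{\mu(B^{\delta_n}(w_n))} = \frac{\mu(B^{\delta_n}(\hat x))}{S^{\delta_n}} \cdot \frac{S^{\delta_n}}{\mu(B^{\delta_n}(w_n))},
\]
where the first factor tends to $1$ by the hypothesis of the corollary and the second tends to $1$ by the squeeze above. Thus \eqref{eq:property} holds for this approximating sequence, and \cref{thm:limit_condition} yields that $\hat x$ is a strong mode.

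The only genuinely load-bearing step is the observation that the generalized-mode approximating sequence eventually enters $B^r(\hat x)$: this is precisely what lets the purely local hypothesis on $S^\delta$ control the global supremum $M^\delta$. Without the generalized-mode assumption there would be no a priori link between the restricted supremum over $B^r(\hat x)$ and the unrestricted $M^\delta$, so the reduction of the global comparison to the local one — via the squeeze $S^{\delta_n}/M^{\delta_n} \to 1$ — is the crux of the argument, while everything else is a routine factorization of ratios.
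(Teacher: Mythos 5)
Your proof is correct and takes essentially the same route as the paper: both use the approximating sequence from the generalized-mode definition, observe that it eventually enters $B^r(\hat x)$ so that $\mu(B^{\delta_n}(w_n)) \le \sup_{w \in B^r(\hat x)} \mu(B^{\delta_n}(w))$, and conclude via \eqref{eq:property} and \cref{thm:limit_condition}. Your squeeze argument is in fact slightly more explicit than the paper's one-line chain of inequalities, since it establishes the two-sided limit in \eqref{eq:property} rather than only a lower bound.
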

\begin{proof}
    Let $\{\delta_n\}_{n\in\N}$ be a positive sequence such that $\delta_n \to 0$, 
    and let $\{w_n\}_{n\in\N} \subset X$ be the corresponding approximating sequence.
    Then
    \begin{equation*}
        \lim_{n\to\infty} \frac{\mu(B^{\delta_n}(\hat x))}{\mu(B^{\delta_n}(w_n))}
        \ge \lim_{n\to\infty} \frac{\mu(B^{\delta_n}(\hat x))}
        {\sup_{w \in B^r(\hat x)} \mu(B^{\delta_n}(w))}
        \ge \lim_{\delta \to 0} \frac{\mu(B^\delta(\hat x))}
        {\sup_{w \in B^r(\hat x)} \mu(B^\delta(w))}
        =1
    \end{equation*}
    since $w_n \to \hat x$.
    Hence, \eqref{eq:property} holds, and $\hat x$ is therefore a strong mode by \cref{thm:limit_condition}.
\end{proof}

We illustrate the possibility of satisfying \eqref{eq:property} with the following example.
\begin{example}
    For a probability measure $\mu$ on $\R$ with continuous density $p$ with respect to the Lebesgue measure, \eqref{eq:property} is satisfied in every point $\hat x \in \R$ with $p(\hat x) > 0$.
    To see this, let $\epsilon > 0$. Then there exists a $\delta_0 > 0$ such that for all $x \in B^{\delta_0}(\hat x)$,
    \begin{equation*} 
        \abs{p(x) - p(\hat x)} \le \epsilon. 
    \end{equation*}
    Therefore, for all $\delta \in (0,\frac{\delta_0}{2})$ and $w \in B^{\frac{\delta_0}{2}}(\hat x)$,
    \begin{equation*}
        \bigabs{\frac{\mu(B^\delta(w))}{2\delta} - p(\hat x)} 
        \le \frac{1}{2\delta} \int_{w-\delta}^{w+\delta} \abs{p(x) - p(\hat x)} \di x 
        \le \epsilon.
    \end{equation*}
    So, for every positive sequence $\delta_n \to 0$ and every real sequence $w_n \to \hat x$,
    \begin{equation*}
        \lim_{n\to\infty} \frac{\mu(B^{\delta_n}(w_n))}{2\delta_n} = p(\hat x) > 0.
    \end{equation*}
    This holds true in particular for the constant sequence $w_n = \hat x$, so that
    \begin{equation*}
        \lim_{n\to\infty} \frac{\mu(B^{\delta_n}(\hat x))}{\mu(B^{\delta_n}(w_n))} = \frac{p(\hat x)}{p(\hat x)} = 1.
    \end{equation*}
\end{example}

The property \eqref{eq:property} can be seen as the requirement to have sufficiently fast convergence of the approximating sequences. In other words, we require that the balls $B^\delta(\hat x)$ and $B^{\delta_n}(w_n)$ have asymptotically the same measure. This idea can further quantified by the following theorem.
\begin{theorem} \label{thm:claim1}
    Let $\hat{x} \in X$ be a generalized mode of a Borel probability
    measure $\mu$. If 
    \begin{enumerate}
        \item\label{it:rate} for every positive sequence $\{\delta_n\}_{n\in\N}$ with
            $\delta \to 0$, there exists an approximating sequence $\{w_n\}_{n\in\N}$ such that
            \begin{equation*}
                \lim_{n \to \infty} \frac{\norm{w_n-\hat{x}}_X}{\delta_n} = 0,
            \end{equation*}

        \item\label{it:equicont}
            the family of functions $\{f_n\}_{n\in\N}$ on $[0,1]$ defined by 
            \begin{equation*}
                f_n:[0,1]\to\R, \qquad f_n(r) := \frac{\mu(B^{r(\delta_n + \norm{w_n - \hat x}_X)}(\hat
                x))}{\mu(B^{\delta_n + \norm{w_n - \hat x}_X}(\hat x))},
            \end{equation*}
            is equicontinuous at $r=1$,
    \end{enumerate}
    then $\hat x$ is a strong mode.
\end{theorem}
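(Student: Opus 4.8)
The plan is to reduce everything to the characterization of strong modes given in \cref{thm:limit_condition}: since $\hat x$ is assumed to be a generalized mode, it suffices to produce, for an arbitrary positive null sequence $\{\delta_n\}_{n\in\N}$, an approximating sequence satisfying the ratio condition~\eqref{eq:property}. I would take $\{w_n\}_{n\in\N}$ to be precisely the approximating sequence furnished by hypothesis~\ref{it:rate}, so that $\rho_n := \norm{w_n - \hat x}_X$ obeys $\rho_n/\delta_n \to 0$, and then show that this same sequence forces $\mu(B^{\delta_n}(\hat x))/\mu(B^{\delta_n}(w_n)) \to 1$.

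The first step is a purely geometric sandwiching of the off-center ball by two concentric balls. Writing $R_n := \delta_n + \rho_n$, the triangle inequality yields the inclusions $B^{\delta_n - \rho_n}(\hat x) \subseteq B^{\delta_n}(w_n) \subseteq B^{R_n}(\hat x)$ (the left one valid once $\delta_n > \rho_n$, hence for all large $n$), whence
\begin{equation*}
    \frac{\mu(B^{\delta_n}(\hat x))}{\mu(B^{R_n}(\hat x))}
    \le \frac{\mu(B^{\delta_n}(\hat x))}{\mu(B^{\delta_n}(w_n))}
    \le \frac{\mu(B^{\delta_n}(\hat x))}{\mu(B^{\delta_n - \rho_n}(\hat x))}.
\end{equation*}
The key observation is that every ball occurring here is centered at $\hat x$ with radius a multiple of $R_n$: indeed $\delta_n = r_n R_n$ and $\delta_n - \rho_n = s_n R_n$ with $r_n = 1/(1 + \rho_n/\delta_n)$ and $s_n = (1 - \rho_n/\delta_n)/(1 + \rho_n/\delta_n)$, both of which lie in $[0,1]$ for large $n$. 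Consequently the left-hand ratio equals $f_n(r_n)$ and the right-hand ratio equals $f_n(r_n)/f_n(s_n)$ in the notation of hypothesis~\ref{it:equicont}, while $f_n(1) = 1$ by construction.

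The crux is then combining the two hypotheses. The rate condition~\ref{it:rate} guarantees $\rho_n/\delta_n \to 0$ and therefore $r_n \to 1$ and $s_n \to 1$. The equicontinuity of $\{f_n\}_{n\in\N}$ at $r = 1$ upgrades this to $f_n(r_n) \to 1$ and $f_n(s_n) \to 1$: given $\epsilon > 0$, equicontinuity supplies a single $\eta > 0$ valid for all $n$, so that once $\abs{r_n - 1} < \eta$ (respectively $\abs{s_n - 1} < \eta$), which holds eventually, we obtain $\abs{f_n(r_n) - 1} < \epsilon$ uniformly in $n$. Feeding this back into the sandwich, both outer ratios tend to $1$, and the squeeze theorem delivers~\eqref{eq:property}; \cref{thm:limit_condition} then finishes the proof.

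I expect the main obstacle to be the correct exploitation of \emph{equicontinuity} rather than mere pointwise continuity of the individual $f_n$: because the evaluation points $r_n$ and $s_n$ drift with $n$, it is essential that a single modulus of continuity at $r = 1$ serves the entire family simultaneously, and this is exactly what hypothesis~\ref{it:equicont} provides. Two minor technical points remain to be verified: that $f_n(s_n)$ is bounded away from $0$ for large $n$ so that the division is legitimate (automatic once $f_n(s_n) \to 1$), and that $\mu(B^{R_n}(\hat x)) > 0$, which follows since $\{w_n\}_{n\in\N}$ is an approximating sequence and $B^{\delta_n}(w_n) \subseteq B^{R_n}(\hat x)$ forces $\mu(B^{R_n}(\hat x)) \ge \mu(B^{\delta_n}(w_n)) > 0$ eventually.
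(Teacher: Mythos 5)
Your argument is correct and follows essentially the same route as the paper's proof: both rest on the inclusions $B^{\delta_n-\rho_n}(\hat x)\subseteq B^{\delta_n}(w_n)\subseteq B^{\delta_n+\rho_n}(\hat x)$, use the equicontinuity of $\{f_n\}$ at $r=1$ evaluated at the points $\delta_n/(\delta_n+\rho_n)$ and $(\delta_n-\rho_n)/(\delta_n+\rho_n)$ (which tend to $1$ by hypothesis~\ref{it:rate}), and conclude via \cref{thm:limit_condition}. Your reorganization as a direct squeeze of ratios, rather than the paper's bound on the difference $\abs{\mu(B^{\delta_n}(\hat x))-\mu(B^{\delta_n}(w_n))}$ by the measure of an annulus, is only a cosmetic difference, and your attention to the positivity of the denominators is a welcome detail.
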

\begin{proof}
    Notice first that by monotonicity of probability, the function 
    \begin{equation*} 
        \phi_{x,s}(r) := \frac{\mu(B^{rs}(x))}{\mu(B^s(x))}
    \end{equation*}
    is a left-continuous increasing function with right limits on $[0,1]$ for every $x \in X$ and every $s > 0$ and hence so is $f_n= \phi_{\hat x, \delta_n + \norm{w_n - \hat x}_X}$.
    Let now $\{\delta_n\}_{n\in\N}$ be a positive sequence with $\delta_n \to 0$ and let $\{w_n\}_{n\in\N}$ be an approximating sequence satisfying assumption \ref{it:rate}. Setting
    \begin{equation*} 
        r_n := \norm{w_n - \hat{x}}_X \quad \text{for all }n \in \N,
    \end{equation*}
    by the triangle inequality we have
    \begin{equation*}
        \abs{\mu(B^{\delta_n}(\hat x)) - \mu(B^{\delta_n}(w_n))} \le
        \mu(B^{\delta_n+r_n}(\hat x)) - 
        \mu(B^{\delta_n-r_n}(\hat x))
    \end{equation*}
    for $n$ large enough.
    Furthermore, assumption \ref{it:equicont} implies that for every $\epsilon > 0$,
    \begin{equation*}
        f_n(1) -
        f_n\left(\frac{\delta_n - r_n}{\delta_n + r_n}\right) < \epsilon		
    \end{equation*}	  
    for every $n$ large
    enough, since
    \begin{equation*}
        \frac{\delta_n - r_n}{\delta_n + r_n} = \frac{1 - \frac{r_n}{\delta_n}}{1 + \frac{r_n}{\delta_n}} \to 1 \qquad \text{as }n \to \infty. 
    \end{equation*}
    We thus obtain that
    \begin{equation*}
        \limsup_{n \to \infty}\frac{\abs{\mu(B^{\delta_n}(\hat x)) -
        \mu(B^{\delta_n}(w_n))}}{\mu(B^{\delta_n + r_n}(\hat x))} \le \epsilon
    \end{equation*}
    which implies that
    \begin{equation}
        \label{eq:conv_rate_eq1}
        \lim_{n \to \infty}\frac{\mu(B^{\delta_n}(\hat x)) -
        \mu(B^{\delta_n}(w_n))}{\mu(B^{\delta_n + r_n}(\hat x))} 
        =    \lim_{n \to \infty}\frac{\mu(B^{\delta_n}(\hat x))}{\mu(B^{\delta_n + r_n}(\hat x))} \left(1 - \frac{\mu(B^{\delta_n}(w_n))}{\mu(B^{\delta_n}(\hat x))}\right) = 0. 
    \end{equation}
    Since for every $\epsilon > 0$ the assumption \ref{it:equicont} yields $f_n(1) -
    f_n(\delta_n/(\delta_n + r_n)) < \epsilon$ for every $n$ large, it
    also follows that
    \begin{equation*}
        \limsup_{n \to \infty}\frac{\abs{\mu(B^{\delta_n}(\hat x)) -
        \mu(B^{\delta_n + r_n}(\hat x))}}{\mu(B^{\delta_n + r_n}(\hat x))}
        \le \epsilon
    \end{equation*}
    and in particular that
    \begin{equation}
        \label{eq:conv_rate_eq2}
        \lim_{n \to \infty}\frac{\mu(B^{\delta_n}(\hat
        x))}{\mu(B^{\delta_n + r_n}(\hat x))} = 1.
    \end{equation}
    The two limits \eqref{eq:conv_rate_eq1} and \eqref{eq:conv_rate_eq2} together yield that
    \begin{equation*}
        \lim_{n \to \infty}\frac{\mu(B^{\delta_n}(w_n))}{\mu(B^{\delta_n}(\hat x))} = 1
    \end{equation*}
    and, therefore, $\hat x$ is a strong mode by \cref{thm:limit_condition}.
\end{proof}

\begin{remark}
    The equicontinuity condition \ref{it:equicont} is implied by the 
    \emph{$\eta$-annular decay property} ($\eta$-AD) for some $\eta>0$ at the generalized mode
    point (see~\cite{BBL2017} for the definition). In the finite-dimensional
    case, the $1$-AD property at $x$ is equivalent to $f(r) =
    \mu(B^{r}(x))$ being locally absolutely continuous on $(0,\infty)$ and
    $f'(r) r \le c f(r)$ for some $c>0$ and almost every $r > 0$. For example, it can be seen by direct computation that the function $f$ in \cref{ex:standard} has the $1$-AD property at the generalized mode $\hat x = 0$.
\end{remark}

\subsection{Characterization by convergence on a dense subspace}

We next consider the case when the approximating sequences $\{w_n\}_{n\in\N}$ are restricted to a dense subspace of $X$ where a certain continuity of the ratios holds.
\begin{proposition} \label{prop:seqinE}
    Let $\hat{x} \in X$ be a generalized mode of a Borel probability measure $\mu$ and let $E$ be a dense subset of $X$.
    Then for every $\{\delta_n\}_{n\in\N} \subset (0,\infty)$ with $\delta \to 0$ there exists a sequence $\{\tilde{w}_n\}_{n\in\N}\subset E$ with $\tilde{w}_n \to \hat{x}$ in $X$ such that
    \begin{equation*}
        \lim_{n\to\infty} \frac{\mu(B^{\delta_n}(\tilde{w}_n))}{\sup_{x \in X} \mu(B^{\delta_n}(x))} = 1.
    \end{equation*}
\end{proposition}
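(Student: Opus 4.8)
The plan is to start from the generalized-mode property of $\hat x$ and then perturb its approximating sequence into the dense set $E$ while controlling the resulting loss of ball mass. Fix a positive sequence $\{\delta_n\}_{n\in\N}$ with $\delta_n \to 0$. Since $\hat x$ is a generalized mode, \cref{def:gmode} supplies an approximating sequence $\{w_n\}_{n\in\N} \subset X$ with $w_n \to \hat x$ and $\mu(B^{\delta_n}(w_n))/M^{\delta_n} \to 1$. The task is to produce points $\tilde w_n \in E$ close to $w_n$ for which the same limit holds; since $\mu(B^{\delta_n}(\tilde w_n)) \le M^{\delta_n}$ always, it suffices to bound the ratio from below by a quantity tending to $1$.

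The mechanism is a nesting argument. Using density of $E$, for each $n$ I would pick $\tilde w_n \in E$ with $\norm{\tilde w_n - w_n}_X < \epsilon_n$ for a radius $\epsilon_n \in (0,\delta_n)$ to be fixed below. The triangle inequality then gives $B^{\delta_n - \epsilon_n}(w_n) \subseteq B^{\delta_n}(\tilde w_n)$, and hence $\mu(B^{\delta_n}(\tilde w_n)) \ge \mu(B^{\delta_n - \epsilon_n}(w_n))$. Everything therefore reduces to choosing $\epsilon_n$ so small that shrinking the radius from $\delta_n$ to $\delta_n - \epsilon_n$ costs almost no mass.

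The key tool, and the step resolving the main obstacle, is the left-continuity of $s \mapsto \mu(B^s(x))$, i.e.\ continuity of $\mu$ from below along the increasing union $B^s(x) \uparrow B^{\delta_n}(x)$ as $s \uparrow \delta_n$; this is exactly the monotonicity and left-continuity already exploited in \cref{thm:claim1}. For each fixed $n$ large enough that $\mu(B^{\delta_n}(w_n)) > 0$ (which holds because the ratio tends to $1$ and $M^{\delta_n} > 0$ by separability of $X$), this lets me choose $\epsilon_n \le 1/n$ with $\mu(B^{\delta_n - \epsilon_n}(w_n)) \ge (1 - \tfrac1n)\,\mu(B^{\delta_n}(w_n))$. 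Dividing by $M^{\delta_n}$ yields
\begin{equation*}
    \frac{\mu(B^{\delta_n}(\tilde w_n))}{M^{\delta_n}} \ge \left(1 - \tfrac1n\right)\frac{\mu(B^{\delta_n}(w_n))}{M^{\delta_n}} \longrightarrow 1,
\end{equation*}
and, combined with the upper bound $\le 1$, the squeeze gives the claimed limit; meanwhile $\norm{\tilde w_n - \hat x}_X \le \epsilon_n + \norm{w_n - \hat x}_X \to 0$, so $\tilde w_n \to \hat x$ in $X$.

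The main obstacle I anticipate is precisely that $\mu$ carries no continuity assumption and that $M^{\delta_n}$ may tend to $0$ as $\delta_n \to 0$. This rules out any \emph{additive} control of the perturbation error: an estimate of the form $\mu(B^{\delta_n - \epsilon_n}(w_n)) \ge \mu(B^{\delta_n}(w_n)) - 1/n$ would be useless after dividing by a vanishing $M^{\delta_n}$. The crux of the argument is that the only continuity available is in the \emph{radius} variable (left-continuity from below), and it must be used in \emph{multiplicative} form so as to survive division by $M^{\delta_n}$.
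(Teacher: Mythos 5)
Your proof is correct. It follows the same overall skeleton as the paper's proof -- perturb the approximating sequence $\{w_n\}_{n\in\N}$ into $E$ within radii tending to zero and show that the ball mass does not drop too much -- but the key continuity step is implemented differently. The paper first proves that $x \mapsto \mu(B^{\delta}(x))$ is lower semi-continuous in the \emph{center} (via Fatou's lemma applied to the pointwise inequality $\chi_{B^\delta(x)} \le \liminf_n \chi_{B^\delta(x_n)}$) and then, for each $n$, chooses the perturbation radius so that the loss is at most the \emph{additive} quantity $\tfrac1n M^{\delta_n}$, which survives division by $M^{\delta_n}$ because it is already scaled by it. You instead work in the \emph{radius} variable: the nesting $B^{\delta_n-\epsilon_n}(w_n) \subseteq B^{\delta_n}(\tilde w_n)$ reduces everything to continuity of $\mu$ from below along $B^s(w_n) \uparrow B^{\delta_n}(w_n)$, and you encode the loss \emph{multiplicatively} as a factor $(1-\tfrac1n)$. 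Both devices accomplish the same thing, and your explicit remark about why a naive additive bound $\mu(B^{\delta_n}(w_n)) - \tfrac1n$ would be destroyed by the vanishing denominator $M^{\delta_n}$ correctly identifies the one genuine pitfall; the paper's $\tfrac1n M^{\delta_n}$ threshold is precisely its way around the same pitfall. Your argument is arguably slightly more elementary (it avoids Fatou entirely), at the cost of having to treat separately the finitely many indices with $\mu(B^{\delta_n}(w_n))=0$, which you do correctly by noting that $M^{\delta_n}>0$ by separability and that the ratio tends to $1$.
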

\begin{proof}
    We first show that for every $\delta > 0$, the mapping $x \mapsto \mu(B^\delta(x))$ is lower semi-continuous.
    To this end, let $\{x_n\}_{n\in\N}\subset X$ be a sequence converging to $x \in X$ and let
    \begin{equation*}
        \chi_A(x) = \begin{cases}
            1, & \text{if }x \in A, \\
            0, & \text{otherwise},
        \end{cases} 
    \end{equation*}
    denote the characteristic function of a set $A \subset X$.
    Then for every $x \in X$, we have 
    \begin{equation*}
        \chi_{B^\delta(x)}(x) \le \liminf_{n\to\infty} \chi_{B^\delta(x_n)}(x), 
    \end{equation*}
    and Fatou's Lemma yields
    \begin{equation*}
        \begin{aligned}[t]
            \mu(B^\delta(x))
            &= \int_X \chi_{B^\delta(x)}(x) \mu(\di x)
            \le \int_X \liminf_{n\to\infty} \chi_{B^\delta(x_n)}(x) \mu(\di x) \\
            &\le \liminf_{n\to\infty} \int_X \chi_{B^\delta(x_n)}(x) \mu(\di x)
            = \liminf_{n\to\infty} \mu(B^\delta(x_n)).
        \end{aligned}
    \end{equation*}

    Now let $\{\delta_n\} \subset (0,\infty)$ with $\delta_n \to 0$ and let $\{w_n\}_{n\in\N} \subset X$ be a corresponding approximating sequence.
    Consider a fixed $n \in \N$.
    By the lower semi-continuity of $x \mapsto \mu(B^{\delta_n}(x))$, we can choose an $R > 0$ such that
    \begin{equation*}
        \mu(B^{\delta_n}(v)) \ge \mu(B^{\delta_n}(w_n)) - \frac{1}{n}M^{\delta_n} \qquad \text{for all }v \in B^R(w_n). 
    \end{equation*}
    Set $r = \min \{R, \frac{1}{n} \}$.
    Because $E$ is dense in $X$, we can choose a $\tilde{w}_n \in B^r(w_n)$, which therefore
    satisfies both
    \begin{equation*}
        1 \ge \frac{\mu(B^{\delta_n}(\tilde{w}_n))}{M^{\delta_n}} \ge \frac{\mu(B^{\delta_n}(w_n))}{M^{\delta_n}} - \frac{1}{n}
    \end{equation*}
    and
    \begin{equation*}
        \norm{\tilde{w}_n - \hat{x}}_X \le \norm{\tilde{w}_n - w_n}_X + \norm{w_n - \hat{x}}_X \le \frac{1}{n} +  \norm{w_n - \hat{x}}_X. 
    \end{equation*}
    As $n \in \N$ was arbitrary, we obtain the desired sequence $\{\tilde{w}_n\}_{n\in\N} \subset E$ with $\tilde{w}_n \to \hat{x}$ in $X$ and
    \begin{equation*}
        1 \ge \lim_{n\to\infty} \frac{\mu(B^{\delta_n}(\tilde{w}_n))}{M^{\delta_n}} 
        \ge \lim_{n\to\infty} \frac{\mu(B^{\delta_n}(w_n))}{M^{\delta_n}} - \lim_{n\to\infty} \frac{1}{n} 
        = 1. 
        \qedhere
    \end{equation*}
\end{proof}

In order to give a sufficient condition for the coincidence of generalized and strong modes, we consider a more specific class of probability measures.
In the following we consider admissible shifts of $\mu$, i.e., elements $h \in X$ such that the shifted measure $\mu_h := \mu(\cdot - h)$ is equivalent to $\mu$.
\enlargethispage{1cm}
\begin{theorem} \label{thm:sufconds1}
    Let $\mu$ be a Borel probability measure with a space of admissible shifts $H$. Suppose that $H$ possesses a dense continuously embedded subspace $(E, \norm{\cdot}_E) \subset H$ 
    such that for every $h\in E$, the density of $\mu_h$ with respect to $\mu$ has a continuous representative $\frac{\di\mu_h}{\di\mu} \in C(X)$. 
    Let $\hat x \in X$ be a generalized mode of $\mu$.  If
    \begin{enumerate}
        \item \label{condconvE} for every $\{\delta_n\}_{n \in \N} \subset (0,\infty)$ with $\delta_n \to 0$ there is an approximating sequence $\{w_n\}_{n \in \N} \subset \hat{x} + E$ with $\norm{w_n - \hat{x}}_E \to 0$, 
        \item \label{condsupcont} there is an $R > 0$ such that 
            \begin{equation*}
                f_R:(E,\norm{\cdot}_E)\to\R,\qquad  f_R(h) := \sup_{x \in B^R(\hat x)} \left|\frac{\di\mu_{h}}{\di\mu}(x) - 1\right|
            \end{equation*}
            is continuous at $0$,
    \end{enumerate}
    then $\hat x$ is a strong mode.
\end{theorem}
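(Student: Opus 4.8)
The plan is to verify the criterion of \cref{thm:limit_condition}: since $\hat x$ is already assumed to be a generalized mode, it suffices to produce, for each positive null sequence $\{\delta_n\}_{n\in\N}$, an approximating sequence $\{w_n\}_{n\in\N}$ satisfying $\lim_{n\to\infty} \mu(B^{\delta_n}(\hat x))/\mu(B^{\delta_n}(w_n)) = 1$, i.e.\ \eqref{eq:property}. Assumption \ref{condconvE} hands me exactly a candidate: an approximating sequence of the form $w_n = \hat x + h_n$ with $h_n \in E$ and $\norm{h_n}_E \to 0$. By \cref{def:gmode} this sequence already converges to $\hat x$ in $X$ and satisfies \eqref{eq:gmode_conv}, so it is admissible for \cref{thm:limit_condition}, and the whole task reduces to estimating the ratio $\mu(B^{\delta_n}(w_n))/\mu(B^{\delta_n}(\hat x))$.

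The second step is to rewrite the shifted ball probability through the density. Since $B^{\delta}(w_n) = B^{\delta}(\hat x) + h_n$, the definition $\mu_h = \mu(\cdot - h)$ gives $\mu(B^{\delta}(w_n)) = \mu_{-h_n}(B^{\delta}(\hat x))$, where $-h_n$ again lies in $E \subset H$ with $\norm{-h_n}_E = \norm{h_n}_E \to 0$, so that $\mu_{-h_n}$ is equivalent to $\mu$ with a continuous density. Using $\di\mu_{-h_n}/\di\mu \in C(X)$, I then write
\begin{equation*}
    \frac{\mu(B^{\delta}(w_n))}{\mu(B^{\delta}(\hat x))} - 1 = \frac{1}{\mu(B^{\delta}(\hat x))}\int_{B^{\delta}(\hat x)}\left(\frac{\di\mu_{-h_n}}{\di\mu}(x) - 1\right)\mu(\di x),
\end{equation*}
so that the left-hand side is a $\mu$-average of $\frac{\di\mu_{-h_n}}{\di\mu} - 1$ over $B^{\delta}(\hat x)$ and is therefore bounded in modulus by $\sup_{x \in B^{\delta}(\hat x)}\abs{\frac{\di\mu_{-h_n}}{\di\mu}(x) - 1}$.

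The third step is to pass to the uniform bound $f_R$ and let $n \to \infty$. For $n$ large enough that $\delta_n \le R$ I have $B^{\delta_n}(\hat x) \subset B^R(\hat x)$, hence the supremum above is at most $f_R(-h_n)$. Since $f_R(0) = 0$ and $f_R$ is continuous at $0$ by assumption \ref{condsupcont}, and $\norm{-h_n}_E \to 0$, I conclude $f_R(-h_n) \to 0$; thus $\mu(B^{\delta_n}(w_n))/\mu(B^{\delta_n}(\hat x)) \to 1$, which is equivalent to \eqref{eq:property}, and \cref{thm:limit_condition} finishes the proof.

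I expect the only delicate points to be bookkeeping rather than genuine obstacles: getting the sign of the shift correct (so that the relevant density is $\di\mu_{-h_n}/\di\mu$ rather than $\di\mu_{h_n}/\di\mu$), confirming that $-h_n$ remains an admissible shift in $E$ with $E$-norm tending to zero, and invoking $\delta_n \le R$ only eventually so that the bound by $f_R$ applies for large $n$. The conceptual content is entirely in the averaging inequality of the second step, which is what converts the \emph{pointwise} continuity of the density near $\hat x$ (encoded in the continuity of $f_R$ at $0$) into control of the ball-probability ratios.
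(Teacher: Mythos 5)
Your proposal is correct and follows essentially the same route as the paper's proof: invoke \cref{thm:limit_condition}, take the approximating sequence from condition (i), express the ratio of ball probabilities as a $\mu$-average of $\frac{\di\mu_{\hat x - w_n}}{\di\mu}-1$ over $B^{\delta_n}(\hat x)$, bound it by $f_R$ evaluated at the shift once $\delta_n\le R$, and conclude via the continuity of $f_R$ at $0$. Your bookkeeping of the shift sign (evaluating $f_R$ at $\hat x - w_n\in E$ rather than, as the paper loosely writes, at $w_n$) is in fact slightly more careful than the paper's own wording.
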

\begin{proof}
    Choosing $N$ large enough such that $\delta_n \le R$ for all $n \ge N$, we have for all $n \ge N$ that
    \begin{equation*}
        \begin{aligned}[t]
            \bigabs{\frac{\mu(B^{\delta_n}(w_n))}{\mu(B^{\delta_n}(\hat x))} - 1}
            &= \bigabs{\frac{1}{\mu(B^{\delta_n}(\hat x))} \int_{B^{\delta_n}(\hat x)} \left(\frac{\di\mu_{\hat{x} - w_n}}{\di\mu}(x) - 1\right) \mu(\di x)} \\
            &\le \sup_{x \in B^{\delta_n}(\hat x)} \bigabs{ \frac{\di\mu_{\hat{x} - w_n}}{\di\mu}(x) - 1 } \frac{1}{\mu(B^{\delta_n}(\hat x))} \int_{B^{\delta_n}(\hat x)} \mu(\di x) 
            \le f_R(w_n).
        \end{aligned}
    \end{equation*}
    However, $f_R(w_n) \to f_R(\hat x) = 0$ by the continuity of $f_R$ and the convergence $w_n \to \hat x$ in $E$, so that
    \begin{equation*}
        \lim_{n\to\infty}\frac{\mu(B^{\delta_n}(w_n))}{\mu(B^{\delta_n}(\hat x))} = 1.
    \end{equation*}
    Hence, $\hat x$ is a strong mode by \cref{thm:limit_condition}.
\end{proof}
For a nondegenerate Gaussian measure $\mu = \mathcal{N}(a,Q)$ on a separable Hilbert space $X$ with mean $a \in X$ and covariance operator $Q \in \calL(X)$, the assumptions of \cref{thm:sufconds1} are fulfilled for $E = Q(X)$ and $\hat x = w_n := a$ for all $n \in \N$ as well as any $R>0$. The required continuity properties are also satisfied for, e.g., the Besov measures with $p>1$ discussed in \cite{helin2015maximum}.

\begin{corollary}\label{cor:equistrong}
    Let $\hat x \in X$ be a generalized mode of $\mu$ that satisfies condition \ref{condconvE} of \cref{thm:sufconds1}. If additionally
    \begin{equation*}
        \lim_{w \to_{E} \hat x} \frac{\di\mu_{\hat x - w}}{\di\mu}(\hat x) = 1 
    \end{equation*}
    and there is an $r > 0$ such that the family
    \begin{equation*}
        \left\{ \frac{\di\mu_{\hat x - w}}{\di\mu}: w \in B_E^r(\hat x) \right\}, \quad B_E^r(\hat x) := \left\{ x \in \hat x + E: \norm{x - \hat x}_E < r \right\} \subset \hat x + E, 
    \end{equation*}
    is equicontinuous in $\hat x$, then $\hat x$ is a strong mode
\end{corollary}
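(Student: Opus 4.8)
The plan is to establish the characterizing identity \eqref{eq:property} directly and then invoke \cref{thm:limit_condition}; the two hypotheses of the corollary serve as a pointwise-plus-equicontinuity surrogate for the uniform continuity condition \ref{condsupcont} of \cref{thm:sufconds1}. The starting point is exactly the integral estimate already obtained in the proof of \cref{thm:sufconds1}. Writing $g_n := \frac{\di\mu_{\hat x - w_n}}{\di\mu}$ for the continuous density associated with the admissible shift $\hat x - w_n \in E$ (which is well defined and continuous by the standing assumptions, since $w_n \in \hat x + E$), the shift identity $\mu(B^{\delta_n}(w_n)) = \int_{B^{\delta_n}(\hat x)} g_n(x)\,\mu(\di x)$ gives
\begin{equation*}
    \bigabs{\frac{\mu(B^{\delta_n}(w_n))}{\mu(B^{\delta_n}(\hat x))} - 1} \le \sup_{x \in B^{\delta_n}(\hat x)} \abs{g_n(x) - 1}.
\end{equation*}
It therefore suffices to show that this supremum tends to $0$ along any approximating sequence $\{w_n\}_{n\in\N}$ of the kind provided by condition \ref{condconvE}, that is, with $\norm{w_n - \hat x}_E \to 0$.

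To control the supremum over the shrinking balls $B^{\delta_n}(\hat x)$, I would split by the triangle inequality as $\abs{g_n(x) - 1} \le \abs{g_n(x) - g_n(\hat x)} + \abs{g_n(\hat x) - 1}$. The second term is handled by the pointwise hypothesis: since $\norm{w_n - \hat x}_E \to 0$, the assumption $\lim_{w \to_E \hat x} \frac{\di\mu_{\hat x - w}}{\di\mu}(\hat x) = 1$ yields $g_n(\hat x) \to 1$. The first term is where the equicontinuity hypothesis enters: given $\epsilon > 0$, equicontinuity of the family $\{g_w : w \in B_E^r(\hat x)\}$ at $\hat x$ provides a single radius $\rho > 0$, uniform in $w$, such that $\abs{g_w(x) - g_w(\hat x)} < \epsilon$ for every $x \in B^\rho(\hat x)$ and every $w \in B_E^r(\hat x)$.

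The two hypotheses are then synchronized through the two convergences $\delta_n \to 0$ and $\norm{w_n - \hat x}_E \to 0$: for $n$ large enough one has simultaneously $\delta_n < \rho$, so that $B^{\delta_n}(\hat x) \subset B^\rho(\hat x)$, and $w_n \in B_E^r(\hat x)$, so that the equicontinuity bound applies to $g_n = g_{w_n}$. For such $n$ this gives $\sup_{x \in B^{\delta_n}(\hat x)} \abs{g_n(x) - 1} \le \epsilon + \abs{g_n(\hat x) - 1}$, whence $\limsup_{n\to\infty} \sup_{x \in B^{\delta_n}(\hat x)} \abs{g_n(x) - 1} \le \epsilon$; as $\epsilon > 0$ was arbitrary, the limit is $0$. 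Combined with the first displayed estimate this produces $\frac{\mu(B^{\delta_n}(w_n))}{\mu(B^{\delta_n}(\hat x))} \to 1$, which is precisely \eqref{eq:property}, so that \cref{thm:limit_condition} identifies $\hat x$ as a strong mode. The only real subtlety — hardly an obstacle — lies in this last synchronization: the radius $\rho$ coming from equicontinuity must be genuinely uniform over the family indexed by $w$, so that it can be held fixed while both $\delta_n$ and $w_n$ vary, and it is exactly equicontinuity of the family (rather than mere continuity of each individual density) that delivers this.
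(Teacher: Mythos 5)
Your proposal is correct and follows essentially the same route as the paper: the paper verifies condition \ref{condsupcont} of \cref{thm:sufconds1} via exactly your triangle-inequality split (equicontinuity for $\bigl|\frac{\di\mu_{\hat x - w}}{\di\mu}(x) - \frac{\di\mu_{\hat x - w}}{\di\mu}(\hat x)\bigr|$ plus the pointwise limit for $\bigl|\frac{\di\mu_{\hat x - w}}{\di\mu}(\hat x) - 1\bigr|$) and then invokes that theorem, whereas you inline the integral estimate from its proof and pass directly to \cref{thm:limit_condition}. The substance -- the shift identity, the $\epsilon/2$-decomposition, and the synchronization of the equicontinuity radius with $\delta_n\to 0$ and $\norm{w_n-\hat x}_E\to 0$ -- is identical.
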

\begin{proof}
    We show that condition \ref{condsupcont} of \cref{thm:sufconds1} is satisfied as well, from which the claim then follows.
    For a given $\epsilon > 0$ we choose $0 < r_0 \le r$ small enough such that for all $w \in B_E^{r_0}(\hat x)$ we have that
    \begin{equation*}
        \bigabs{\frac{\di\mu_{\hat x - w}}{\di\mu}(\hat x) - 1} \le \frac{\epsilon}{2}. 
    \end{equation*}
    If we also choose $0 < R \le r_0$ such that for all $x \in B_X^{R}(\hat x)$ and all $w \in B_E^r(w)$ we have that
    \begin{equation*}
        \bigabs{ \frac{\di\mu_{\hat x - w}}{\di\mu}(x) - \frac{\di\mu_{\hat x - w}}{\di\mu}(\hat x)} \le \frac{\epsilon}{2} ,
    \end{equation*}
    then the triangle inequality yields that
    \begin{equation*}
        \abs{f_R(w) - f_R(\hat x)} = \bigabs{\sup_{x \in B^R(\hat x)} \bigabs{\frac{\di\mu_{\hat{x} - w}}{\di\mu}(x) - 1} - 0} \le \epsilon
    \end{equation*}
    for all $w \in B_E^{r_0}(\hat x)$ and therefore the equicontinuity of $f$ at $\hat x$.
\end{proof}

\section{Modes of uniform priors} \label{sec:prior}

We now demonstrate the usefulness of the concept of generalized modes for a class of uniform probability measures on infinite-dimensional spaces that can serve as priors in Bayesian inverse problems.
We first discuss the rigorous construction of the uniform probability measure and then show that such measures admit generalized but in general not strong modes.

\subsection{Construction of the probability measure}

We proceed similar as in \cite{Dashti2017}, with the difference that we define a probability measure on a subspace of $\ell^\infty$ rather than of $L^\infty$. Let us first fix some notation. For $x:=\{x_k\}_{k\in\N} \in \ell^\infty$, we write 
$\norm{x}_\infty = \sup_{k\in\N} \abs{x_k}$.
Furthermore, let $e_j \in \ell^\infty$ for $j \in \N$ denote the standard unit vector in $\ell^\infty$, i.e., $[e_k]_j = 1$ for $j=k$ and $0$ else. 
We then define 
\begin{equation} \label{eq:Xdef}
    X := \overline{\spn\{e_k\}_{k\in\N}} \subset \ell^\infty
\end{equation}
and note that $X = \{x \in \ell^\infty: \lim_{k\to\infty} x_k = 0\}=:c_0$. 
We thus have that $(X,\norm{\cdot}_\infty)$ is a separable Banach space.

We now construct a class of probability measures on $X$ whose mass is concentrated on a set of sequences with strictly bounded components. First, we define a random variable $\xi$ according to the random series
\begin{equation}\label{eq:xidef}
    \xi := \sum_{k=1}^\infty \gamma_k \xi_k e_k,
\end{equation}
where
\begin{enumerate}
    \item $\xi_k \sim \mathcal{U}[-1,1]$ (i.e., uniformly distributed on $[-1,1]$) and
    \item $\gamma_k\geq 0$ for all $k\in \N$ with $\gamma_k\to 0$. 
\end{enumerate}
Note that the partial sums $\xi^n := \sum_{k=1}^n \gamma_k \xi_k e_k$ almost surely form a Cauchy sequence in $X$, since for all $N, m, n \in \N$ with $N \le m \le n$ we have that
\begin{equation*}
    \norm{\xi^n - \xi^m}_\infty
    = \norm{\textstyle\sum_{k=m+1}^n \gamma_k \xi_k e_k}_\infty
    = \sup_{m+1 \le k \le n} \gamma_k \abs{\xi_k}
    \le \sup_{k \ge N} \gamma_k,
\end{equation*}
and the right hand side tends to zero as $N \to \infty$. Since $X$ is complete, the series \eqref{eq:xidef} therefore converges almost surely.
We can thus define the probability measure $\mu_\gamma$ on $X$ by 
\begin{equation}\label{eq:prior_def}
    \mu_\gamma(A) := \Prob{\xi \in A} \quad \text{for every }A \in \mathcal{B}(X),
\end{equation}

The following sets will be important for our study of the generalized and strong modes of $\mu_\gamma$.
We define 
\begin{align*}
    \Eg &:= \{x \in X : \abs{x_k} \le \gamma_k \text{ for all } k \in \N\},
    \intertext{and for every $\delta > 0$,}
    \Egd &:= \{x \in X: \abs{x_k} \le \max \{\gamma_k - \delta, 0\} \text{ for all } k \in \N\} \subset \Eg
    \shortintertext{as well as}
    \Egn &:= \bigcup_{\delta>0} \Egd.
\end{align*}
We first collect some basic properties of $\Eg$ and $\Egd$.
\begin{proposition} \label{prop:Egclosed}
    The sets $\Eg$ and $\Egd$ are convex, compact, and have empty interior.
\end{proposition}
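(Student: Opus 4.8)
The plan is to establish the three properties—convexity, compactness, and empty interior—separately for $\Eg$, with the analogous arguments for $\Egd$ following by inspection since $\Egd$ has exactly the same structure with $\gamma_k$ replaced by $\max\{\gamma_k-\delta,0\}$. Convexity is immediate: if $x,y\in\Eg$ and $t\in[0,1]$, then for each $k$ we have $\abs{tx_k+(1-t)y_k}\le t\abs{x_k}+(1-t)\abs{y_k}\le t\gamma_k+(1-t)\gamma_k=\gamma_k$, so $tx+(1-t)y\in\Eg$. The same one-line estimate works for $\Egd$ with the bounds $\max\{\gamma_k-\delta,0\}$.

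For compactness I would argue directly in $X=c_0$. First I would show $\Eg$ is closed: if $x^{(n)}\to x$ in $\norm{\cdot}_\infty$ with $x^{(n)}\in\Eg$, then in particular $x^{(n)}_k\to x_k$ for each fixed $k$, so $\abs{x_k}=\lim_n\abs{x^{(n)}_k}\le\gamma_k$, giving $x\in\Eg$. The main work is total boundedness (or sequential compactness). Since $\gamma_k\to 0$, the set $\Eg$ is a natural candidate for a ``$c_0$-box'' that is compact precisely because the side lengths decay. I would verify sequential compactness: given a sequence $\{x^{(n)}\}\subset\Eg$, each coordinate sequence $\{x^{(n)}_k\}_n$ lies in the compact interval $[-\gamma_k,\gamma_k]$, so by a diagonal argument I can extract a subsequence converging coordinatewise to some $x$ with $\abs{x_k}\le\gamma_k$. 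I then upgrade coordinatewise convergence to $\norm{\cdot}_\infty$-convergence using the tail decay: given $\epsilon>0$, choose $N$ with $\gamma_k<\epsilon$ for all $k>N$ (possible since $\gamma_k\to0$), so that $\sup_{k>N}\abs{x^{(n)}_k-x_k}\le 2\epsilon$ uniformly in $n$, while the finitely many coordinates $k\le N$ converge by construction. This shows $x^{(n)}\to x$ in $X$, and $x\in\Eg$ by closedness, establishing sequential (hence norm) compactness.

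For the empty interior I would suppose toward a contradiction that some ball $B^r(x)\subset\Eg$ with $r>0$. Since $\gamma_k\to0$, pick an index $k$ with $\gamma_k<r/2$. Consider the perturbed point $y:=x+\tfrac{r}{2}\,\sgn(x_k)\,e_k$ (or simply $y:=x+\tfrac r2 e_k$ if $x_k\ge 0$, adjusting the sign so as to increase $\abs{x_k}$); then $\norm{y-x}_\infty=r/2<r$, so $y\in B^r(x)\subset\Eg$, yet $\abs{y_k}=\abs{x_k}+r/2\ge r/2>\gamma_k$, contradicting the defining constraint of $\Eg$. Hence $\Eg$ has empty interior; the identical construction applies to $\Egd$, choosing $k$ large enough that $\max\{\gamma_k-\delta,0\}<r/2$. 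The only step requiring genuine care is the total boundedness argument for compactness, where the decay $\gamma_k\to0$ is essential—this is exactly the property that fails for a fixed-width box in $\ell^\infty$ and is what makes $\Eg$ compact in $c_0$.
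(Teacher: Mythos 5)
Your proof is correct. Convexity and the empty-interior argument are essentially identical to the paper's (the paper perturbs by $3\gamma_m e_m$ after choosing $\gamma_m<\epsilon/3$; you perturb by $\tfrac r2 e_k$ after choosing $\gamma_k<r/2$ -- same idea, and your sign adjustment correctly handles the case $x_k=0$ that a literal use of $\sgn$ would miss). The one genuine difference is in the compactness step: the paper proves closedness (via openness of the complement), invokes completeness, and then exhibits an explicit finite $\epsilon$-net by truncating to the first $N$ coordinates and placing a grid $\{\lambda_k\epsilon\}$ on each; you instead prove sequential compactness by a coordinatewise Bolzano--Weierstrass plus diagonal extraction, upgrading to $\norm{\cdot}_\infty$-convergence with the uniform tail bound $\sup_{k>N}\abs{x^{(n)}_k-x_k}\le 2\sup_{k>N}\gamma_k$. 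Both routes are standard and equivalent for metric spaces; the paper's net construction is slightly more self-contained (no appeal to the equivalence of sequential and covering compactness), while yours is arguably cleaner and makes the role of the decay $\gamma_k\to0$ more transparent -- as you note, it is exactly what fails for a fixed-width box in $\ell^\infty$. No gaps.
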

\begin{proof}
    We only consider the case of $\Eg$; the case of $\Egd$ follows analogously.

    First, convexity follows directly from the definition since for every $x, y \in \Eg$ and $\lambda \in [0,1]$ we have that
    \begin{equation*}
        \abs{\{\lambda x + (1 - \lambda) y\}_k} = \abs{\lambda x_k + (1 - \lambda) y_k} \le \lambda \abs{x_k} + (1-\lambda) \abs{y_k} \le \gamma_k\quad\text{for all } k \in \N,
    \end{equation*}
    and hence that $\lambda x + (1 - \lambda) y \in \Eg$.

    For the compactness of $\Eg$, we first show that $X\setminus \Eg$ is open. Let $x \in X \setminus \Eg$ be arbitrary. Then there exists an $m \in \N$ with $\abs{x_m} > \gamma_m$, so that for $\varepsilon := \abs{x_m} - \gamma_m$ we have $B^\varepsilon(x) \in X \setminus \Eg$ as claimed. 
    Hence as a closed subset of a complete space, $\Eg$ is itself complete, which allows us to show compactness by constructing a finite covering of $\Eg$ by balls of radius $\epsilon$. Let $\epsilon > 0$.
    As $\gamma_k\to 0$, there is an $N \in \N$ such that for every $x \in \Eg$ and all $k \ge N+1$,
    \begin{equation*}
        \abs{x_k} \le \gamma_k < \epsilon.
    \end{equation*}
    Now choose $M \in \N$ with $M\epsilon \ge \max \{\gamma_k: k = 1, \dots, N\}$. For every $x \in \Eg$ and $k \in \{1, \dots, N\}$, there is a $\lambda_k \in \{-M,\dots,M\}$ such that $\abs{x_k - \lambda_k\epsilon} < \epsilon$. Hence, $\norm{x - z_N}_\infty < \epsilon$ for $z_N:=\sum_{k=1}^N \lambda_k\epsilon e_k$, which implies that
    \begin{equation*}
        \Eg \subset \bigcup_{k=1}^N B^\epsilon(z_k)
    \end{equation*}
    is the desired finite covering.

    Finally, to show that $\Eg$ has empty interior, let $x \in \Eg$ and $\epsilon > 0$ be arbitrary. We choose $m \in \N$ such that $\gamma_m < \frac{\epsilon}{3}$. Then, $y := x + 3\gamma_m\phi_m \in B^\epsilon(x)$, because $\norm{y - x}_\infty = \norm{3\gamma_m\phi_m}_\infty = 3\gamma_m < \epsilon$, but $y \notin \Eg$. Consequently, $x \in \partial\Eg$ and  hence the interior of $E_\gamma$ is empty.
\end{proof}
Of particular use for finding both generalized and strong modes will be the metric projections onto $\Egd$. For any $\delta >0$, let 
\begin{equation*}
    P^\delta:X\to\Egd,\qquad  P^\delta(x) := \bar x \quad\text{with}\quad \norm{\bar x - x}_\infty = \inf_{z \in \Egd} \norm{z - x}_\infty,
\end{equation*}
which is well-defined because $\Egd$ is closed and convex by \cref{prop:Egclosed}. It is straightforward to show by case distinction that this projection can be characterized componentwise for every $k\in\N$ via
\begin{equation}
    \label{eq:projection}
    [P^\delta(x)]_k =
    \begin{cases}
        0 & \text{if } \gamma_k < \delta,\\
        \gamma_k-\delta & \text{if }x_k > \gamma_k-\delta>0,\\
        x_k & \text{if } x_k \in [-\gamma_k +\delta,\gamma_k-\delta],\\
        -\gamma_k+\delta & \text{if } x_k < -\gamma_k+\delta<0.
    \end{cases}
\end{equation}
The projection satisfies the following properties.
\begin{lemma} \label{lem:projconv}
    Let $x\in \Eg$. Then, $\lim_{\delta\to 0} P^\delta x = x$ in $X$.
\end{lemma}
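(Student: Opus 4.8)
The plan is to exploit the explicit componentwise formula \eqref{eq:projection} for $P^\delta$ together with the defining inequalities $\abs{x_k} \le \gamma_k$ of $\Eg$, and to prove the stronger quantitative statement that $\norm{P^\delta x - x}_\infty \le \delta$ for every $\delta > 0$. Since convergence in $X$ is precisely convergence in the sup norm, this uniform bound immediately yields $P^\delta x \to x$ as $\delta \to 0$.

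Concretely, I would fix $x \in \Eg$ and $\delta > 0$ and estimate $\abs{[P^\delta x]_k - x_k}$ for each $k \in \N$ by running through the branches of \eqref{eq:projection}. If $\gamma_k < \delta$, then $[P^\delta x]_k = 0$, so $\abs{[P^\delta x]_k - x_k} = \abs{x_k} \le \gamma_k < \delta$. If $x_k \in [-\gamma_k + \delta, \gamma_k - \delta]$, the component is left unchanged and the deviation is zero. In the two remaining branches the component is clamped to $\pm(\gamma_k - \delta)$; using $\abs{x_k} \le \gamma_k$ one checks that $x_k$ and its clamped value differ by at most $\gamma_k - (\gamma_k - \delta) = \delta$. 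The degenerate boundary case $\gamma_k = \delta$, where \eqref{eq:projection} forces $[P^\delta x]_k = 0$, is covered by the same estimate $\abs{x_k} \le \gamma_k = \delta$. Taking the supremum over $k$ then gives $\norm{P^\delta x - x}_\infty \le \delta$, which is all that is required.

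There is essentially no obstacle here: the argument is a finite case distinction producing a per-coordinate bound that is \emph{uniform} in $k$, so no tail estimate — and in particular no use of $\gamma_k \to 0$ — is needed for the convergence itself. The hypothesis $\gamma_k \to 0$ enters only to guarantee that $P^\delta x$ stays in $X = c_0$, which is already ensured since $P^\delta$ maps into $\Egd \subset X$ by construction. The one point deserving mild care is to account for every branch of the clamping map in \eqref{eq:projection}, including the degenerate branch $\gamma_k \le \delta$, so that the bound $\delta$ is genuinely attained in all coordinates.
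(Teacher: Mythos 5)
Your proof is correct and takes essentially the same route as the paper: the paper's proof consists of the single observation that $\abs{\{P^\delta x\}_k - x_k} \le \delta$ for all $k\in\N$, which is exactly the uniform componentwise bound you establish (in more detail) via the case distinction on \eqref{eq:projection}. Your extra care with the degenerate branch $\gamma_k \le \delta$ is a welcome elaboration but not a departure.
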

\begin{proof}
    This follows directly from the definition since
    \begin{equation*}
        \abs{\{P^\delta x\}_k - x_k} \le \delta \qquad\text{for all }k\in \N.
        \qedhere
    \end{equation*}
\end{proof}

Finally, we can give a more explicit characterization of $\Egn$.
\begin{lemma} \label{char_Egn}
    We have that
    \begin{equation*}
        \Egn
        = \left\{x \in X:
            \abs{x_k} < \gamma_k\text{ for all }k \in \N, x_k\neq 0 \text{ for finitely many } k\in\N
        \right\}.
    \end{equation*}
\end{lemma}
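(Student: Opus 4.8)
The plan is to prove the set equality by two inclusions, the key observation being that membership of a point $x$ in $\Egn$ is recorded by a \emph{single} threshold $\delta>0$ with $x\in\Egd$, i.e.\ $\abs{x_k}\le\max\{\gamma_k-\delta,0\}$ for all $k\in\N$. The two defining properties on the right-hand side --- the strict bound $\abs{x_k}<\gamma_k$ and finiteness of the support --- should fall out of analysing this inequality coordinatewise, together with $\gamma_k\to 0$.

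For the inclusion $\Egn\subseteq\{\,\abs{x_k}<\gamma_k,\ x_k\neq0\text{ finitely often}\,\}$, I would fix $x\in\Egn$ and pick $\delta>0$ with $x\in\Egd$. Splitting each coordinate according to whether $\gamma_k>\delta$ or $\gamma_k\le\delta$ gives everything at once: when $\gamma_k>\delta$ one has $\abs{x_k}\le\gamma_k-\delta<\gamma_k$, and when $\gamma_k\le\delta$ the right-hand side $\max\{\gamma_k-\delta,0\}$ equals $0$, forcing $x_k=0$. Since $\gamma_k\to0$, only finitely many indices satisfy $\gamma_k>\delta$, so $x_k=0$ for all but finitely many $k$; this simultaneously delivers the finite-support condition and the strict inequality $\abs{x_k}<\gamma_k$ at every coordinate. (Coordinates with $\gamma_k=0$ are identically zero in every set considered and may be excluded; I would state at the outset that $\gamma_k>0$ is assumed, so that $\abs{x_k}<\gamma_k$ makes sense on the zero coordinates as well.)

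For the reverse inclusion I would take $x$ with $\abs{x_k}<\gamma_k$ for all $k$ and finite support $K:=\{k:x_k\neq0\}$, and exhibit an explicit threshold. The natural choice is $\delta:=\min_{k\in K}(\gamma_k-\abs{x_k})$, with $\delta:=1$ say if $K=\emptyset$. One then checks $x\in\Egd$ coordinatewise: for $k\in K$ the choice gives $\gamma_k-\delta\ge\abs{x_k}>0$, hence $\max\{\gamma_k-\delta,0\}=\gamma_k-\delta\ge\abs{x_k}$, while for $k\notin K$ the bound $0\le\max\{\gamma_k-\delta,0\}$ is automatic. Thus $x\in\Egd\subseteq\Egn$.

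The only genuine point --- and the reason the two hypotheses on the right are exactly the right ones --- is the \emph{strict positivity} of this single threshold $\delta$. The minimum is strictly positive precisely because it runs over a \emph{finite} set of strictly positive gaps $\gamma_k-\abs{x_k}$. I expect this to be the crux rather than any computational difficulty: if the support were allowed to be infinite, the gaps $\gamma_k-\abs{x_k}$ could tend to $0$, no uniform $\delta>0$ would exist, and $x$ would lie in no $\Egd$. So both the strictness of $\abs{x_k}<\gamma_k$ and the finiteness of the support are indispensable, which matches the union structure $\Egn=\bigcup_{\delta>0}\Egd$ and shows the characterization cannot be relaxed.
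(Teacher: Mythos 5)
Your proposal is correct and follows essentially the same route as the paper: the forward inclusion via a single threshold $\delta$ with a coordinatewise case split using $\gamma_k\to 0$, and the reverse inclusion via the explicit choice $\delta_0=\min\{\gamma_k-\abs{x_k}:x_k\neq 0\}>0$ over the finite support. Your added care about the edge cases ($\gamma_k=0$ coordinates and the empty-support case $K=\emptyset$) is a minor refinement the paper glosses over, not a different argument.
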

\begin{proof}
    By definition, for every $x \in \Egn$ there exists a $\delta > 0$ with $x \in \Egd$
    This implies that $\abs{x_k} \le \max\{\gamma_k - \delta, 0\} < \gamma_k$ for all $k \in \N$.
    Moreover, since $\gamma_k\to 0$ there exists an $N \in \N$ with $\abs{x_k} \le \max\{\gamma_k - \delta, 0\} = 0$ for all $k \ge N$.
    This yields $\gamma_k - \delta \le 0$ for all $k \ge N$.

    Now let $x \in X$ be an element from the set on the right hand side.
    As $\abs{x_k} < \gamma_k$ for all $k \in \N$ and because there are only finitely many $k \in \N$ with $x_k \neq 0$, we can choose
    \begin{equation*} 
        \delta_0 := \min \{ \gamma_k - \abs{x_k}: k \in \N \text{ with }x_k \neq 0 \} > 0
    \end{equation*}
    such that $x \in E_\gamma^{\delta_0} \subset \Egn$.
\end{proof}

\subsection{Small ball probabilities}
We next study the behavior of small balls under the probability measure $\mu_\gamma$, which is crucial for determining modes.
For the sake of conciseness, let 
\begin{equation}
    J_\gamma^\delta(x) := \mu_\gamma(B^\delta(x)) \qquad\text{for all }x \in X,
\end{equation}
for which we have the following straightforward characterization.
\begin{lemma} \label{prop:priorballprob}
    For every $x \in X$ and $\delta > 0$, it holds that
    \begin{equation*}
        \mu_\gamma(\overline{B^\delta(x)}) =  J_\gamma^\delta(x)
        =  \prod_{k=1}^\infty \Prob{\gamma_k \xi_k \in (x_k - \delta, x_k + \delta)},
    \end{equation*}
    where $\overline{A}$ denotes the closure of a set $A \subset X$.
\end{lemma}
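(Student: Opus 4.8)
The plan is to exploit the product structure of the random series $\xi = \sum_{k=1}^\infty \gamma_k \xi_k e_k$, whose coordinates $\gamma_k\xi_k$ are independent. The key observation is that the open ball $B^\delta(x)$ in the $\ell^\infty$-norm is exactly the set of sequences $z$ whose every coordinate satisfies $\abs{z_k - x_k} < \delta$; that is, $B^\delta(x) = \{z\in X : \abs{z_k - x_k} < \delta \text{ for all }k\}$, a ``box'' constraint that factorizes across coordinates. By independence of the $\{\xi_k\}$, the probability that $\xi$ lands in this box should therefore factor as the infinite product
\begin{equation*}
    \Prob{\xi \in B^\delta(x)} = \prod_{k=1}^\infty \Prob{\gamma_k \xi_k \in (x_k - \delta, x_k + \delta)}.
\end{equation*}

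First I would make the box description precise: since $\norm{z - x}_\infty = \sup_k \abs{z_k - x_k}$, strict inequality of the supremum does \emph{not} immediately reduce to strict inequality of each coordinate (the supremum could be attained in the limit), so some care is needed. I would approach this via finite-dimensional approximation. For each $n$, let $C_n^\delta(x) := \{z : \abs{z_k - x_k} < \delta \text{ for } k \le n\}$ be the cylinder set constraining only the first $n$ coordinates; because these depend on disjoint sets of independent variables, $\Prob{\xi \in C_n^\delta(x)} = \prod_{k=1}^n \Prob{\gamma_k\xi_k \in (x_k-\delta, x_k+\delta)}$ is a genuine finite product. The open ball satisfies $B^\delta(x) \subset \bigcap_n C_n^\delta(x)$, and the intersection $\bigcap_n C_n^\delta(x)$ is exactly the closed box $\{z : \abs{z_k - x_k} \le \delta \text{ for all }k\}$ in each coordinate only in the limit; the task is to show the measures of $B^\delta(x)$, of $\bigcap_n C_n^\delta(x)$, and of the closed ball $\overline{B^\delta(x)}$ all coincide. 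By continuity of measure from above, $\Prob{\xi \in \bigcap_n C_n^\delta(x)} = \lim_n \prod_{k=1}^n \Prob{\gamma_k\xi_k \in (x_k-\delta, x_k+\delta)}$, which is the claimed infinite product.

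The main obstacle, and the step I expect to require the most care, is the squeeze showing that the open ball, the closed ball, and the coordinatewise box all have the same measure — equivalently, that the boundary sphere is $\mu_\gamma$-null. Here I would use that for each coordinate, $\gamma_k\xi_k$ has an absolutely continuous distribution (when $\gamma_k > 0$, it is uniform on $[-\gamma_k,\gamma_k]$), so $\Prob{\gamma_k\xi_k = x_k \pm \delta} = 0$; consequently replacing strict by non-strict inequalities in any single coordinate does not change the probability. The delicate point is the \emph{simultaneous} constraint: a sequence $z$ can lie in $\overline{B^\delta(x)}\setminus B^\delta(x)$ by having $\sup_k\abs{z_k-x_k} = \delta$ realized only asymptotically (e.g. $\abs{z_k - x_k}\uparrow\delta$) without any single coordinate hitting $\pm\delta$. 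I would handle this by noting that since $\gamma_k\to 0$, for $k$ large the constraint $\abs{\gamma_k\xi_k - x_k} < \delta$ is automatically satisfied (as $\abs{x_k}\to 0$ and $\gamma_k\abs{\xi_k}\le\gamma_k\to 0$), so only finitely many coordinates are ``active,'' reducing the limiting behavior to a finite-dimensional box where the absolute continuity of each coordinate makes the sphere null. This pins down $\mu_\gamma(\overline{B^\delta(x)}) = J_\gamma^\delta(x) = \prod_{k=1}^\infty \Prob{\gamma_k\xi_k \in (x_k-\delta, x_k+\delta)}$, completing the proof.
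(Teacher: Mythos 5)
Your proposal is correct and follows essentially the same route as the paper: both arguments reduce to a finite product by observing that $\gamma_k + \abs{x_k} < \delta$ for all sufficiently large $k$ (so the tail constraints hold with probability one), invoke independence of the $\xi_k$ for the resulting finite product, and obtain the closed-ball identity from the fact that each coordinate distribution assigns equal mass to the open and closed intervals. The extra care you take about the supremum being attained and the boundary sphere being $\mu_\gamma$-null is left implicit in the paper's one-line identification of $\{\xi \in B^\delta(x)\}$ with the coordinatewise event, but it rests on exactly the same observation you make, namely that only finitely many coordinates are active.
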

\begin{proof}
    First of all, by definition
    \begin{equation*}
        J_\gamma^\delta(x) = \mu_\gamma(B^\delta(x)) = \Prob{\xi \in B^\delta(x)} = \Prob{\abs{\gamma_k \xi_k - x_k} < \delta \text{ for all }k \in \N}. 
    \end{equation*}
    As both $\gamma_k \to 0$ and $x_k \to 0$, there exists an $N \in \N$ such that
    \begin{equation*}
        \gamma_k + \abs{x_k} \le \frac{\delta}{2} \quad \text{for all }k \ge N+1. 
    \end{equation*}
    This implies that
    \begin{equation*}
        \Prob{\abs{\gamma_k \xi_k - x_k} < \delta \text{ for all }k \ge N+1} = 1 
    \end{equation*}
    since $\xi_k \in [-1,1]$ almost surely.
    Consequently,
    \begin{equation*}
        \begin{aligned}[t]
            \mu_\gamma(B^\delta(x))
            &= \Prob{\abs{\gamma_k \xi_k - x_k} < \delta \text{ for }k = 1, \dots, N} 
            \cdot \Prob{\abs{\gamma_k \xi_k - x_k} < \delta \text{ for all }k \ge N+1} \\
            &= \prod_{k=1}^N \Prob{\abs{\gamma_k \xi_k - x_k} < \delta}
        \end{aligned}
    \end{equation*}
    by the independence of the $\xi_k$. This yields the second identity.

    The first identity now follows from 
    \begin{equation*}
        \Prob{\gamma_k \xi_k \in [x_k - \delta, x_k + \delta]} 
        = \Prob{\gamma_k \xi_k \in (x_k - \delta, x_k + \delta)} \quad \text{for all }k \in \N. \qedhere 
    \end{equation*}
\end{proof}

We can use this characterization to show that for every $\delta>0$, the origin maximizes $J_\gamma^\delta$.
\begin{proposition} \label{prop:maxprobzero} 
    Let $\delta>0$. Then 
    \begin{equation*}
        J_\gamma^\delta(0) = \max_{x \in X} J_\gamma^\delta(x) > 0.
    \end{equation*}
\end{proposition}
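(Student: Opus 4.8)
The plan is to build directly on the product formula from \cref{prop:priorballprob}, which gives
\[
    J_\gamma^\delta(x) = \prod_{k=1}^\infty \Prob{\gamma_k\xi_k \in (x_k - \delta,\, x_k + \delta)},
\]
and to reduce the whole statement to a \emph{componentwise} maximization. Writing $F_k(t) := \Prob{\gamma_k\xi_k \in (t-\delta,\, t+\delta)}$, I would first record that the law of $\gamma_k\xi_k$ is symmetric about the origin: it is the point mass at $0$ when $\gamma_k = 0$, and the uniform law on $[-\gamma_k,\gamma_k]$ when $\gamma_k > 0$. In both cases the density is nonincreasing in $\abs{t}$, so one expects a window of fixed width $2\delta$ to capture the most mass when centered at $0$; the goal of the main step is to turn this intuition into the inequality $F_k(t) \le F_k(0)$ for all $t \in \R$ and all $k \in \N$.

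The key (and essentially only nontrivial) step is thus to prove $F_k(t) \le F_k(0)$. For $\gamma_k = 0$ this is immediate, since $F_k(t) = 1$ precisely when $\abs{t} < \delta$ while $F_k(0) = 1$. For $\gamma_k > 0$ I would write $F_k(t) = \frac{1}{2\gamma_k}\,\lambda\!\big((t-\delta,\,t+\delta)\cap[-\gamma_k,\gamma_k]\big)$ with $\lambda$ the Lebesgue measure, use the symmetry $F_k(t) = F_k(-t)$ to restrict to $t \ge 0$, and then verify by a short case distinction --- according to whether the window $(t-\delta,t+\delta)$ is contained in, overlaps, or contains the support $[-\gamma_k,\gamma_k]$ --- that moving the window away from the center never increases the length of the intersection. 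This is precisely the one-dimensional instance of the fact that a symmetric unimodal law puts maximal mass on a centered interval of fixed width; I expect this case analysis to be the main obstacle, though it is entirely elementary.

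Granting the per-factor inequality, the product inequality is automatic: since $0 \le F_k(x_k) \le F_k(0) \le 1$ for every $k$, multiplying over $k$ yields
\[
    J_\gamma^\delta(x) = \prod_{k=1}^\infty F_k(x_k) \le \prod_{k=1}^\infty F_k(0) = J_\gamma^\delta(0),
\]
so that $0 \in X$ attains the maximum. Finally, for strict positivity I would reuse the truncation already carried out in the proof of \cref{prop:priorballprob}: because $\gamma_k \to 0$, there is an $N \in \N$ with $\gamma_k < \delta$ for all $k > N$, whence $F_k(0) = 1$ for those indices and the infinite product collapses to the finite product $\prod_{k=1}^N F_k(0)$. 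Each remaining factor equals $F_k(0) = \min\{\delta/\gamma_k,\,1\} > 0$ (or $1$ if $\gamma_k = 0$), so the product is a finite product of strictly positive numbers and hence $J_\gamma^\delta(0) > 0$. Everything beyond the case distinction of the second paragraph is bookkeeping.
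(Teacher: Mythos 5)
Your proposal is correct and follows essentially the same route as the paper: the product formula from \cref{prop:priorballprob}, the componentwise inequality $\Prob{\gamma_k\xi_k \in (x_k-\delta,x_k+\delta)} \le \Prob{\gamma_k\xi_k \in (-\delta,\delta)}$, and the truncation of the product to finitely many factors for positivity. The only difference is that you spell out the per-factor inequality via the symmetric-unimodal case distinction, whereas the paper asserts it without comment; your added detail is correct and harmless.
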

\begin{proof}
    From the second equality in \cref{prop:priorballprob}, we have that
    \begin{equation*}
        \mu_\gamma(B^\delta(x))
        = \prod_{k=1}^\infty \Prob{\gamma_k \xi_k \in (x_k - \delta, x_k + \delta)}
        \le \prod_{k=1}^\infty \Prob{\gamma_k \xi_k \in (-\delta, \delta)} 
        = \mu_\gamma(B^\delta(0))
    \end{equation*}
    for every $x = \{x_k\}_{k\in\N} \in X$. On the other hand, $\mu_\gamma(B^\delta(0)) \le \sup_{x \in X} \mu_\gamma(B^\delta(x))$, which shows that $x=0$ is a maximizer.

    For the positivity, note that as $\gamma_k \to 0$, there exists an $N \in \N$ such that
    \begin{equation*}
        \mu_\gamma(B^\delta(0)) = \prod_{k=1}^N \Prob{\gamma_k \xi_k \in (-\delta,\delta)}>0
    \end{equation*}
    since $\Prob{\gamma_k \xi_k \in (-\delta,\delta)} > 0$ for all $k \leq N$.
\end{proof}

Crucially, $J_\gamma^\delta$ is constant on $\Egd$.
\begin{proposition} \label{lem:projconst}
    For every $\delta > 0$, 
    \begin{equation*}
        J_\gamma^\delta(x_1)=J_\gamma^\delta(x_2) \qquad\text{for all }x_1,x_2\in \Egd.
    \end{equation*}
\end{proposition}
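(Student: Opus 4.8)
The plan is to use the product characterization from \cref{prop:priorballprob} and to show that each individual factor $\Prob{\gamma_k \xi_k \in (x_k - \delta, x_k + \delta)}$ is already constant over $x \in \Egd$, from which the constancy of the product $J_\gamma^\delta$ follows immediately. Since the factors involve only the $k$-th component $x_k$, it suffices to treat each factor separately and check that the defining constraints of $\Egd$ pin down its value regardless of the particular $x \in \Egd$.

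First I would recall that for $\xi_k \sim \mathcal{U}[-1,1]$, the scaled variable $\gamma_k \xi_k$ is uniformly distributed on $[-\gamma_k,\gamma_k]$ with density $1/(2\gamma_k)$ (with the degenerate case $\gamma_k \xi_k = 0$ almost surely when $\gamma_k = 0$). Fixing $k \in \N$ and $x \in \Egd$, so that $\abs{x_k} \le \max\{\gamma_k - \delta, 0\}$ by definition, I would then distinguish two cases according to the relative size of $\gamma_k$ and $\delta$.

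In the case $\gamma_k \ge \delta$, the constraint reads $\abs{x_k} \le \gamma_k - \delta$, which gives $x_k - \delta \ge -\gamma_k$ and $x_k + \delta \le \gamma_k$; hence the interval $(x_k - \delta, x_k + \delta)$ of length $2\delta$ lies entirely inside the support $[-\gamma_k,\gamma_k]$, and integrating the uniform density yields $\Prob{\gamma_k \xi_k \in (x_k - \delta, x_k + \delta)} = \delta/\gamma_k$, independent of $x_k$. In the case $\gamma_k < \delta$, the constraint forces $x_k = 0$, and since the support $[-\gamma_k,\gamma_k]$ is then contained in $(-\delta,\delta)$ we obtain $\Prob{\gamma_k \xi_k \in (-\delta,\delta)} = 1$, again independent of the particular choice of $x \in \Egd$.

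In either case the $k$-th factor takes the same value for all $x \in \Egd$, so the product $J_\gamma^\delta(x) = \prod_{k=1}^\infty \Prob{\gamma_k \xi_k \in (x_k - \delta, x_k + \delta)}$ is constant on $\Egd$, as claimed. There is no substantial obstacle in this argument; the only point requiring care is to observe that the defining bound $\abs{x_k} \le \max\{\gamma_k - \delta, 0\}$ of $\Egd$ is precisely what guarantees that the shifted interval stays within the support in the first case and that $x_k = 0$ in the second, which is exactly why the projection thresholds in \eqref{eq:projection} were chosen this way.
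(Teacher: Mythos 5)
Your proof is correct and follows essentially the same route as the paper's: both use the product formula from \cref{prop:priorballprob} and the same case distinction ($\delta \le \gamma_k$ versus $\delta > \gamma_k$) to show each factor equals $\delta/\gamma_k$ or $1$ independently of the particular point in $\Egd$. The only cosmetic difference is that you show each factor is a fixed constant on $\Egd$ while the paper compares two arbitrary points $x_1, x_2$ directly; the substance is identical.
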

\begin{proof}
    Let $x_1, x_2 \in \Egd$. For every $k \in \N$, we distinguish between the following two cases:
    \begin{enumerate}
        \item $\delta \le \gamma_k$: In this case,
            \begin{equation*}
                (x_{1,k} - \delta, x_{1,k} + \delta) \subseteq (-\gamma_k, \gamma_k) \quad \text{and} \quad (x_{2,k} - \delta, x_{2,k} + \delta) \subseteq (-\gamma_k, \gamma_k),
            \end{equation*}
            so that
            \begin{equation*}
                \Prob{\gamma_k \xi_k \in (x_{1,k} - \delta, x_{1,k} + \delta)}
                = \frac{\delta}{\gamma_k}
                = \Prob{\gamma_k \xi_k \in (x_{2,k} - \delta, x_{2,k} + \delta)}.
            \end{equation*}
        \item $\delta > \gamma_k$: In this case, $x_{1,k} = x_{2,k} = 0$ by definition of $\Egd$, and hence
            \begin{equation*}
                \Prob{\gamma_k \xi_k \in (x_{1,k} - \delta, x_{1,k} + \delta)}
                = 1
                = \Prob{\gamma_k \xi_k \in (x_{2,k} - \delta, x_{2,k} + \delta)}.
            \end{equation*}
    \end{enumerate}
    Together we obtain that
    \begin{equation*}
        \begin{aligned}[t]
            J_\gamma^\delta(x_1) &= \Prob{\xi \in B^\delta(x_1)}
            = \prod_{k=1}^\infty \Prob{\gamma_k \xi_k \in (x_{1,k} - \delta, x_{1,k} + \delta)} \\
            &= \prod_{k=1}^\infty \Prob{\gamma_k \xi_k \in (x_{2,k} - \delta, x_{2,k} + \delta)}
            = \Prob{\xi \in B^\delta(x_2)} = J_\gamma^\delta(x_2))
        \end{aligned}
    \end{equation*}
    as claimed.
\end{proof}
Since $0\in \Egd$,
combining \cref{prop:maxprobzero,lem:projconst} immediately yields that \emph{every} $x\in \Egd$ maximizes $J_\gamma^\delta$. 
\begin{corollary} \label{cor:Egdmaxprob}
    For $\delta > 0$ and every $\bar x \in \Egd$,
    \begin{equation*}
        J_\gamma^\delta(\bar x) = \max_{x \in X} J_\gamma^\delta(x) > 0.
    \end{equation*}
\end{corollary}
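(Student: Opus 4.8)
The plan is to derive the claim directly by chaining the two preceding results, so that essentially no new work is required. First I would observe that the origin belongs to $\Egd$: indeed, $\abs{0} = 0 \le \max\{\gamma_k - \delta, 0\}$ holds trivially for every $k \in \N$, so $0 \in \Egd$ by the defining inequality of that set.

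Next I would invoke \cref{lem:projconst}, which asserts that $J_\gamma^\delta$ takes a constant value on all of $\Egd$. Applying this to the pair $x_1 = \bar x$ and $x_2 = 0$, both of which lie in $\Egd$, yields $J_\gamma^\delta(\bar x) = J_\gamma^\delta(0)$ for every $\bar x \in \Egd$. Finally I would substitute the value of $J_\gamma^\delta(0)$ furnished by \cref{prop:maxprobzero}, namely $J_\gamma^\delta(0) = \max_{x \in X} J_\gamma^\delta(x) > 0$. Combining the two identities gives $J_\gamma^\delta(\bar x) = \max_{x \in X} J_\gamma^\delta(x) > 0$, as claimed.

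There is no genuine obstacle here: the entire content of the corollary is contained in the two propositions, and the only verification needed beyond invoking them is the membership $0 \in \Egd$, which is immediate from the definition. The substantive work was already carried out in establishing that the origin is a maximizer (\cref{prop:maxprobzero}) and that the ball-probability functional is flat on the shrunken box $\Egd$ (\cref{lem:projconst}); the corollary merely records that these two facts, taken together, promote \emph{every} point of $\Egd$ to a maximizer.
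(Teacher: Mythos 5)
Your proposal is correct and is exactly the argument the paper uses: the corollary is stated immediately after the observation that $0 \in \Egd$, so that \cref{lem:projconst} gives $J_\gamma^\delta(\bar x) = J_\gamma^\delta(0)$ and \cref{prop:maxprobzero} identifies this common value as the positive maximum. No further comment is needed.
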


The following proposition and its corollary will be useful in computing and estimating ratios $J_\gamma^\delta(x)/J_\gamma^\delta(0)$ of small ball probabilities.
\begin{proposition} \label{comp_prob_expl}
    Let $x \in \Eg$, $k \in \N$ and $\delta > 0$. If $\gamma_k > 0$ we have
    \begin{equation*}
        \frac{\Prob{\gamma_k\xi_k \in (x_k - \delta, x_k + \delta)}}{\Prob{\gamma_k\xi_k \in (-\delta, \delta)}} = 
        \begin{cases}
            1, & \text{if }\delta \le \gamma_k - \abs{x_k}, \\
            \frac{\delta + \gamma_k - \abs{x_k}}{2\delta}, & \text{if }\delta \in (\gamma_k - \abs{x_k}, \gamma_k], \\
            \frac{\delta + \gamma_k - \abs{x_k}}{2\gamma_k}, & \text{if }\delta \in (\gamma_k, \gamma_k + \abs{x_k}) \\
            1, & \text{if }\delta \ge \gamma_k + \abs{x_k}.
        \end{cases}
    \end{equation*}
    If $\gamma_k = 0$, on the other hand, we have
    \begin{equation*}
        \frac{\Prob{\gamma_k\xi_k \in (x_k - \delta, x_k + \delta)}}{\Prob{\gamma_k\xi_k \in (-\delta, \delta)}} = 1.
    \end{equation*}
\end{proposition}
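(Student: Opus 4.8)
The plan is to reduce the claim to elementary computations with lengths of intervals. For $\gamma_k > 0$, the scaled variable $\gamma_k \xi_k$ is uniformly distributed on $[-\gamma_k, \gamma_k]$ with density $\frac{1}{2\gamma_k}$, so that for any interval $I \subset \R$ one has $\Prob{\gamma_k \xi_k \in I} = \frac{1}{2\gamma_k}\, \lambda\!\left(I \cap (-\gamma_k, \gamma_k)\right)$, where $\lambda$ denotes the Lebesgue measure on $\R$. First I would apply this to the denominator, obtaining $\Prob{\gamma_k \xi_k \in (-\delta,\delta)} = \frac{1}{\gamma_k}\min\{\delta, \gamma_k\}$, so that the ratio to be computed equals
\[
\frac{\lambda\!\left((x_k - \delta, x_k + \delta) \cap (-\gamma_k, \gamma_k)\right)}{2 \min\{\delta, \gamma_k\}}.
\]
Since $(-\gamma_k, \gamma_k)$ is symmetric about the origin and the reflection $x_k \mapsto -x_k$ maps $(x_k-\delta, x_k+\delta)$ onto $(-x_k-\delta, -x_k+\delta)$, the numerator depends on $x_k$ only through $\abs{x_k}$. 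I may therefore assume without loss of generality that $x_k \ge 0$ and, using $x \in \Eg$, that $0 \le t := \abs{x_k} \le \gamma_k$.

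Next I would evaluate the length of the intersection by identifying its endpoints,
\[
\lambda\!\left((t - \delta, t + \delta) \cap (-\gamma_k, \gamma_k)\right) = \max\!\left\{\min\{t+\delta, \gamma_k\} - \max\{t-\delta, -\gamma_k\},\; 0\right\},
\]
and then split according to the position of $\delta$ relative to the three thresholds $\gamma_k - t$, $\gamma_k$, and $\gamma_k + t$ that arise from comparing $t \pm \delta$ with $\pm \gamma_k$. For $\delta \le \gamma_k - t$ the interval $(t-\delta, t+\delta)$ lies inside $(-\gamma_k, \gamma_k)$, the length is $2\delta$, the denominator is $2\delta$, and the ratio is $1$. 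For $\gamma_k - t < \delta \le \gamma_k$ only the right endpoint is clipped to $\gamma_k$ while $t - \delta \ge -\gamma_k$, so the length is $\delta + \gamma_k - t$ and, since still $\min\{\delta,\gamma_k\} = \delta$, the ratio is $\frac{\delta + \gamma_k - \abs{x_k}}{2\delta}$. For $\gamma_k < \delta < \gamma_k + t$ again only the right endpoint is clipped and $t - \delta > -\gamma_k$, giving length $\delta + \gamma_k - t$, but now $\min\{\delta,\gamma_k\} = \gamma_k$, yielding $\frac{\delta + \gamma_k - \abs{x_k}}{2\gamma_k}$. Finally, for $\delta \ge \gamma_k + t$ both endpoints are clipped, the intersection is all of $(-\gamma_k, \gamma_k)$ with length $2\gamma_k$ equal to the denominator, so the ratio is $1$. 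This reproduces the four cases exactly.

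The remaining case $\gamma_k = 0$ is immediate: then $x \in \Eg$ forces $x_k = 0$, and $\gamma_k \xi_k = 0$ almost surely, so both numerator and denominator equal $\Prob{0 \in (-\delta, \delta)} = 1$ since $\delta > 0$. There is no genuine obstacle in this proof; the only point requiring care is the bookkeeping in the two transition regimes $\gamma_k - t < \delta \le \gamma_k$ and $\gamma_k < \delta < \gamma_k + t$, where I must verify precisely which endpoint of $(t - \delta, t + \delta)$ is clipped by $(-\gamma_k, \gamma_k)$ and that the other endpoint remains inside, together with tracking the switch of $\min\{\delta, \gamma_k\}$ in the denominator across $\delta = \gamma_k$. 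Open versus closed endpoints play no role, since they affect the relevant probabilities only on Lebesgue-null sets.
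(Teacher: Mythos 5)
Your proposal is correct and follows essentially the same route as the paper: both reduce the claim to elementary computations of uniform interval probabilities and perform the same case distinction on $\delta$ relative to $\gamma_k - \abs{x_k}$, $\gamma_k$, and $\gamma_k + \abs{x_k}$. Your explicit intersection-length formula and the symmetry reduction to $x_k \ge 0$ merely streamline the bookkeeping that the paper carries out directly.
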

\begin{proof}
    First of all, $\abs{x_k} \le \gamma_k$ by definition of $\Eg$.
    Hence, if $\gamma_k = 0$ then also $x_k = 0$, so that
    \[ \frac{\Prob{\gamma_k\xi_k \in (x_k - \delta, x_k + \delta)}}{\Prob{\gamma_k\xi_k \in (-\delta, \delta)}} = 1 \]
    in this case.
    Now assume that $\gamma_k > 0$.
    If $\delta \le \gamma_k$, then
    \[ \Prob{\gamma_k\xi_k \in (-\delta, \delta)} = \frac{2\delta}{2\gamma_k} = \frac{\delta}{\gamma_k}, \]
    whereas $\Prob{\gamma_k\xi_k \in (-\delta, \delta)} = 1$ if $\delta > \gamma_k$.

    In case $\delta \le \gamma_k - \abs{x_k}$ we find that
    \[ {\Prob{\gamma_k\xi_k \in (x_k - \delta, x_k + \delta)}} = \frac{2\delta}{2\gamma_k} = \frac{\delta}{\gamma_k}, \]
    and in case $\delta \ge \gamma_k + \abs{x_k}$ we find that
    \[ {\Prob{\gamma_k\xi_k \in (x_k - \delta, x_k + \delta)}} = 1. \]
    In the remaining case $\gamma_k - \abs{x_k} < \delta \le \gamma_k + \abs{x_k}$ we compute
    \[ {\Prob{\gamma_k\xi_k \in (x_k - \delta, x_k + \delta)}} = \frac{\delta + (\gamma_k - \abs{x_k})}{2\gamma_k}. \]
    The proposition now follows from combining these expressions.
\end{proof}

\cref{comp_prob_expl} yields a lower bound which is independent of $\delta$.
\begin{corollary} \label{ineq_comp_prob}
    Let $x \in \Eg$, $k \in \N$. If $\gamma_k > 0$ then the inequality
    \begin{equation*}
        \frac{\Prob{\gamma_k\xi_k \in (x_k - \delta, x_k + \delta)}}{\Prob{\gamma_k\xi_k \in (-\delta, \delta)}}
        \ge 1 - \frac{\abs{x_k}}{2\gamma_k}
    \end{equation*}
    holds for all $\delta > 0$, where we have equality for $\delta = \gamma_k$.
\end{corollary}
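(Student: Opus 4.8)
The plan is to invoke the explicit piecewise formula from \cref{comp_prob_expl} and verify the claimed lower bound in each of its four regimes. Since $x \in \Eg$ we have $\abs{x_k} \le \gamma_k$, hence $\gamma_k - \abs{x_k} \ge 0$; this sign information is precisely what drives the monotonicity arguments below. In the two outer regimes $\delta \le \gamma_k - \abs{x_k}$ and $\delta \ge \gamma_k + \abs{x_k}$ the ratio equals $1$ by \cref{comp_prob_expl}, which trivially exceeds $1 - \frac{\abs{x_k}}{2\gamma_k}$ because $\frac{\abs{x_k}}{2\gamma_k} \ge 0$.

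The only actual work lies in the two transitional regimes. For $\delta \in (\gamma_k - \abs{x_k}, \gamma_k]$ I would rewrite the ratio as $\frac12 + \frac{\gamma_k - \abs{x_k}}{2\delta}$ and observe that, since $\gamma_k - \abs{x_k} \ge 0$, it is nonincreasing in $\delta$; hence its infimum over this interval is attained at the right endpoint $\delta = \gamma_k$, where it equals $1 - \frac{\abs{x_k}}{2\gamma_k}$. For $\delta \in (\gamma_k, \gamma_k + \abs{x_k})$ the ratio $\frac{\delta + \gamma_k - \abs{x_k}}{2\gamma_k}$ is affine and strictly increasing in $\delta$, so it is bounded below by its limiting value at the left endpoint $\delta \to \gamma_k^+$, which is again $1 - \frac{\abs{x_k}}{2\gamma_k}$. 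Combining the four cases shows the ratio is bounded below by $1 - \frac{\abs{x_k}}{2\gamma_k}$ for every $\delta > 0$.

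Finally, the equality assertion follows by substituting $\delta = \gamma_k$ directly into the branch $\delta \in (\gamma_k - \abs{x_k}, \gamma_k]$ of \cref{comp_prob_expl}, which yields $\frac{\gamma_k + \gamma_k - \abs{x_k}}{2\gamma_k} = 1 - \frac{\abs{x_k}}{2\gamma_k}$. I do not anticipate any genuine obstacle here: the corollary is an immediate consequence of the already-computed piecewise expression, and the only substantive content is the elementary observation that both transitional branches are monotone with their common extreme value occurring exactly at $\delta = \gamma_k$. The value of the statement is that it furnishes a lower bound \emph{uniform in} $\delta$, which is what will later allow control of the product over $k$ in estimates of $J_\gamma^\delta(x)/J_\gamma^\delta(0)$.
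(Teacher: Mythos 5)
Your proposal is correct and follows essentially the same route as the paper: both invoke the piecewise formula of \cref{comp_prob_expl} and identify $\delta = \gamma_k$ as the minimizer, where the ratio equals $1 - \frac{\abs{x_k}}{2\gamma_k}$. The only difference is that you spell out the monotonicity of the two transitional branches explicitly, whereas the paper simply asserts that the minimum is ``clearly'' attained at $\delta = \gamma_k$.
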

\begin{proof}
    We use \cref{comp_prob_expl}.
    Clearly, $\Prob{\gamma_k\xi_k \in (x_k - \delta, x_k + \delta)}/\Prob{\gamma_k\xi_k \in (-\delta, \delta)}$ attains its minimum in $\delta = \gamma_k$, in which case we have
    \begin{equation*}
        \frac{\Prob{\gamma_k\xi_k \in (x_k - \delta, x_k + \delta)}}{\Prob{\gamma_k\xi_k \in (-\delta, \delta)}}
        = \frac{2\gamma_k - \abs{x_k}}{2\gamma_k}. \qedhere
    \end{equation*}
\end{proof}

\subsection{Modes of the probability measure}

Finally, we characterize both the strong and the generalized modes of the uniform probability measure and show that these do not coincide.
\begin{theorem}
    Every point $\hat{x} \in \Egn$ is a strong mode of $\mu_\gamma$.
\end{theorem}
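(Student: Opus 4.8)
The plan is to show that the ratio $\mu_\gamma(B^\delta(\hat x))/M^\delta$ is in fact \emph{exactly} equal to $1$ for all sufficiently small $\delta$, so that the limit in \cref{def:mode} is attained trivially. The entire difficulty has already been absorbed into the preceding results: by \cref{prop:maxprobzero} the maximal ball probability is $M^\delta = J_\gamma^\delta(0)$, and by \cref{cor:Egdmaxprob} \emph{every} point of $\Egd$ achieves this maximal probability. It therefore suffices to verify that the fixed point $\hat x$ lies in $\Egd$ not merely for a single value of $\delta$, but for all small $\delta$ simultaneously.

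First I would use the definition $\Egn = \bigcup_{\delta > 0}\Egd$ to fix some $\delta_0 > 0$ with $\hat x \in E_\gamma^{\delta_0}$, i.e.\ $\abs{\hat x_k} \le \max\{\gamma_k - \delta_0, 0\}$ for every $k \in \N$. The key elementary observation is that the threshold $\delta \mapsto \max\{\gamma_k - \delta, 0\}$ is nonincreasing in $\delta$, so that for every $0 < \delta \le \delta_0$ we have $\max\{\gamma_k - \delta, 0\} \ge \max\{\gamma_k - \delta_0, 0\} \ge \abs{\hat x_k}$ for all $k$. This yields the nesting $E_\gamma^{\delta_0} \subseteq \Egd$ for every $\delta \le \delta_0$, and hence $\hat x \in \Egd$ for all such $\delta$.

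Applying \cref{cor:Egdmaxprob} to $\bar x = \hat x \in \Egd$ then gives $J_\gamma^\delta(\hat x) = \max_{x \in X} J_\gamma^\delta(x) = M^\delta > 0$ for every $\delta \in (0, \delta_0]$, so that
\begin{equation*}
    \frac{\mu_\gamma(B^\delta(\hat x))}{M^\delta} = \frac{J_\gamma^\delta(\hat x)}{M^\delta} = 1 \qquad \text{for all } 0 < \delta \le \delta_0.
\end{equation*}
Taking $\delta \to 0$ shows the limit equals $1$ and hence $\hat x$ is a strong mode by \cref{def:mode}. There is essentially no obstacle remaining at this stage; the only point requiring a moment's care is the monotonicity/nesting step, which ensures that membership in $\Egd$ is \emph{preserved} as $\delta$ shrinks rather than being lost, and it is precisely this robustness of interior points (those with $\abs{\hat x_k} < \gamma_k$) that distinguishes them from the boundary points $\hat x_k \in \{-\gamma_k, \gamma_k\}$ treated later.
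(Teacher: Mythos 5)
Your proof is correct and follows essentially the same route as the paper: fix $\delta_0$ with $\hat x \in E_\gamma^{\delta_0}$ and invoke \cref{cor:Egdmaxprob} to conclude $\mu_\gamma(B^\delta(\hat x)) = M^\delta$ for all small $\delta$. You make explicit the nesting $E_\gamma^{\delta_0} \subseteq \Egd$ for $\delta \le \delta_0$, which the paper's proof leaves implicit; this is a genuine (if minor) improvement in rigor, not a different argument.
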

\begin{proof}
    By definition of $\Egn$ there is a $\delta > 0$ such that $\hat{x} \in \Egd$.
    Then 
    \begin{equation*}
        M^\delta = \max_{x \in X} \mu_\gamma(B^\delta(x)) = \mu_\gamma(B^\delta(\hat{x})) 
    \end{equation*}
    by \cref{cor:Egdmaxprob}, and therefore
    \begin{equation*}
        \lim_{\delta \to 0} \frac{\mu_\gamma(B^\delta(\hat{x}))}{M^\delta} = \lim_{\delta \to 0}  \frac{\mu_\gamma(B^\delta(\hat{x}))}{\mu_\gamma(B^\delta(\hat{x}))} = 1. \qedhere 
    \end{equation*}
\end{proof}
The following example shows that there may also be strong modes outside of $\Egn$.
\begin{example}
    We choose $\gamma_k$, $k \in \N$, and $x \in \Eg$ in such a way that for any $\delta > 0$, 
    \[
        \frac{\Prob{\gamma_k\xi_k \in (x_k - \delta, x_k + \delta)}}{\Prob{\gamma_k\xi_k \in (-\delta, \delta)}} = 1 
    \]
    for all components except for one.
    For this purpose, set $\gamma_k := \frac{1}{k(k+2)}$ and $x_k := \frac{1}{k+1}\gamma_k$ for all $k \in \N$.
    Then, $\gamma_k\to 0$,
    \[ 
        \frac{\abs{x_k}}{\gamma_k} = \frac{1}{k+1} \to 0 \quad \text{as }k \to \infty,
    \]
    and
    \begin{align*}
        \gamma_{k+1} + \abs{x_{k+1}} &= \gamma_{k+1}\left(1 + \frac{1}{k+2}\right) = \frac{1}{(k+1)(k+3)}\cdot\frac{k+3}{k+2} \\
        &= \frac{1}{k(k+2)}\cdot\frac{k}{k+1} = \gamma_k\left(1 - \frac{1}{k+1}\right) = \gamma_k - \abs{x_k}
    \end{align*}
    for all $k \in \N$.
    In particular, $x \notin \Egn$ by \cref{char_Egn} because all of its components are different from zero.

    Now for given $\delta > 0$ we choose $m = m(\delta) \in \N$ such that $\gamma_m - \abs{x_m} \le \delta < \gamma_m + \abs{x_m}$.
    Then 
    \[ \frac{\Prob{\gamma_k\xi_k \in (x_k - \delta, x_k + \delta)}}{\Prob{\gamma_k\xi_k \in (-\delta, \delta)}} = 1 \]
    for all $k \in \N \setminus \{m\}$ by \cref{comp_prob_expl} and
    \[ \frac{\mu_\gamma(B^\delta(x))}{\mu_\gamma(B^\delta(0))}
        = \frac{\Prob{\gamma_m\xi_m \in (x_m - \delta, x_m + \delta)}}{\Prob{\gamma_m\xi_m \in (-\delta, \delta)}}
    \ge 1 - \frac{\abs{x_m}}{2\gamma_m} = 1 - \frac{1}{2(m + 1)} \]
    by \cref{prop:priorballprob,ineq_comp_prob}.
    Moreover, $m = m(\delta) \to \infty$ as $\delta \to 0$.
    Consequently,
    \[ \lim_{\delta \to 0} \frac{\mu_\gamma(B^\delta(x))}{M^\delta}
    = \lim_{\delta \to 0} \frac{\mu_\gamma(B^\delta(x))}{\mu_\gamma(B^\delta(0))} = 1 \]
    by \cref{prop:maxprobzero}. Hence $x$ is a strong mode of $\mu_\gamma$.
\end{example}

However, the following results show that \cref{def:mode} is too restrictive in this case and in fact can be unintuitive.
\begin{proposition} \label{prop:strongmodes}
    The following claims hold:
    \begin{enumerate}
        \item\label{it:strongmodes1} If for $x \in \Eg$ there is an $m\in \N$ with $\abs{x_m} = \gamma_m>0$, then $x$ is not a strong mode of $\mu_\gamma$.
        \item\label{it:strongmodes2} There are $\gamma, x \in X$ with $\abs{x_k} < \gamma_k$ for all $k \in \N$ such that $x$ is not a strong mode of $\mu_\gamma$.
    \end{enumerate}
\end{proposition}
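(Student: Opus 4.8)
The plan is to reduce both claims to the explicit product formula for the ball probabilities and to disprove the strong mode property by exhibiting a null sequence of radii $\delta_n \to 0$ along which the ratio $\mu_\gamma(B^{\delta_n}(x))/M^{\delta_n}$ stays bounded away from $1$. Throughout I would use that, by \cref{prop:maxprobzero} and \cref{prop:priorballprob},
\begin{equation*}
    \frac{\mu_\gamma(B^\delta(x))}{M^\delta} = \frac{\mu_\gamma(B^\delta(x))}{\mu_\gamma(B^\delta(0))} = \prod_{k=1}^\infty \frac{\Prob{\gamma_k\xi_k \in (x_k - \delta, x_k + \delta)}}{\Prob{\gamma_k\xi_k \in (-\delta, \delta)}},
\end{equation*}
where each factor lies in $[0,1]$ (the denominator being the maximal probability of an interval of length $2\delta$) and is given explicitly by \cref{comp_prob_expl}. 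In particular, dropping any subset of factors only increases the product, so to bound the ratio from above it suffices to retain a single well-chosen factor.

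For claim \ref{it:strongmodes1}, I would isolate the $m$-th factor. Since $\abs{x_m} = \gamma_m > 0$ we have $\gamma_m - \abs{x_m} = 0$, so every $\delta \in (0,\gamma_m]$ falls in the case $\delta \in (\gamma_m - \abs{x_m}, \gamma_m]$ of \cref{comp_prob_expl}, which gives
\begin{equation*}
    \frac{\Prob{\gamma_m\xi_m \in (x_m - \delta, x_m + \delta)}}{\Prob{\gamma_m\xi_m \in (-\delta, \delta)}} = \frac{\delta + \gamma_m - \abs{x_m}}{2\delta} = \frac12 .
\end{equation*}
Bounding all remaining factors by $1$ then yields $\mu_\gamma(B^\delta(x))/M^\delta \le \frac12$ for all $\delta \in (0,\gamma_m]$, whence $\limsup_{\delta\to 0} \mu_\gamma(B^\delta(x))/M^\delta \le \frac12 < 1$ and $x$ cannot be a strong mode.

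For claim \ref{it:strongmodes2}, the idea is that even with strictly interior components the product degrades at the critical radii $\delta = \gamma_m$, where \cref{ineq_comp_prob} is tight. Concretely, I would set $\gamma_k := 2^{-k}$ and $x_k := \frac12\gamma_k$, so that $\abs{x_k} < \gamma_k$ for all $k$ and $\gamma, x \in X$. Evaluating at $\delta = \gamma_m$, the equality case of \cref{ineq_comp_prob} shows that the $m$-th factor equals $1 - \frac{\abs{x_m}}{2\gamma_m} = \frac34$; bounding the remaining factors by $1$ gives $\mu_\gamma(B^{\gamma_m}(x))/M^{\gamma_m} \le \frac34$. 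Since $\gamma_m \to 0$, the radii $\delta_m := \gamma_m$ form a null sequence along which the ratio stays below $\frac34 < 1$, so $x$ is not a strong mode.

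Neither part presents a serious analytic obstacle, since \cref{comp_prob_expl,ineq_comp_prob} already supply the exact factorwise behaviour; the only genuine ingredient is the choice of example in \ref{it:strongmodes2}. The point to get right there is that keeping the ratio $\abs{x_k}/\gamma_k$ bounded away from $0$ (rather than decaying, as in the preceding strong-mode example) is precisely what prevents the critical factor at $\delta = \gamma_m$ from returning to $1$ in the limit; the one-bad-factor-per-scale construction above is the simplest realization of this idea. With the geometric spacing one in fact checks that all factors with $k \neq m$ equal $1$ at $\delta = \gamma_m$, so that the ratio there is exactly $\frac34$, but only the upper bound is needed for the conclusion.
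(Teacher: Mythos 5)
Your proof is correct and follows essentially the same route as the paper: both parts isolate a single factor of the product formula $\mu_\gamma(B^\delta(x))/\mu_\gamma(B^\delta(0)) = \prod_k \Prob{\gamma_k\xi_k \in (x_k-\delta,x_k+\delta)}/\Prob{\gamma_k\xi_k\in(-\delta,\delta)}$ and bound the remaining factors by one. The only cosmetic difference is in (ii), where the paper takes $\gamma_k = 1/k$ and bounds the ratio by $15/16$ uniformly for all sufficiently small $\delta$, whereas you take $\gamma_k = 2^{-k}$ and obtain the sharper bound $3/4$ only along the null sequence $\delta = \gamma_m$ --- both suffice, since the strong-mode condition requires the limit to equal $1$ along every null sequence of radii.
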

\begin{proof}
    \emph{Ad \ref{it:strongmodes1}:}
    First, note that for $\delta > 0$ small enough, 
    \begin{equation*} 
        \Prob{\gamma_m \xi_m \in (x_m - \delta, x_m + \delta)} = \frac12 \Prob{\gamma_m \xi_m \in (-\delta, \delta)}. 
    \end{equation*}
    Moreover, 
    \begin{equation*}
        \Prob{\gamma_k \xi_k \in (x_k - \delta, x_k + \delta)} \le \Prob{\gamma_k \xi_k \in (-\delta, \delta)}
    \end{equation*}
    for all $k \in \N$ and $\delta > 0$, so that
    \begin{equation*}
        \frac{\mu_\gamma(B^\delta(x))}{\mu_\gamma(B^\delta(0))}
        = \prod_{k=1}^\infty \frac{\Prob{\gamma_k \xi_k \in (x_k - \delta, x_k + \delta)}}
        {\Prob{\gamma_k \xi_k \in (-\delta, \delta)}}
        \le \frac12
    \end{equation*}
    for $\delta > 0$ small enough.
    But by \cref{prop:maxprobzero}, we also have that
    \begin{equation*}
        \limsup_{\delta \to 0} \frac{\mu_\gamma(B^\delta(x))}{M^\delta}
        = \limsup_{\delta \to 0} \frac{\mu_\gamma(B^\delta(x))}{\mu_\gamma(B^\delta(0))} \le \frac12 < 1.
    \end{equation*}
    Hence, $x$ cannot be a strong mode.

    \emph{Ad \ref{it:strongmodes2}:}
    We take $\gamma_k = \frac{1}{k}$ and $x_k = \frac12\gamma_k = \frac{1}{2k}$ for all $k \in \N$. For given $\delta \in (0, \frac14)$ we also choose $n \in \N$ such that $\frac{1}{n} \ge \delta > \frac{1}{n+1}$, i.e., $\frac12\gamma_n< \delta \leq \gamma_n$. Hence,
    \begin{align*}
        \Prob{\gamma_n \xi_n \in (x_n - \delta, x_n + \delta)} &= \frac{\gamma_n - (x_n - \delta)}{2\gamma_n}
        \le \frac{\gamma_n - (-\frac12\gamma_n)}{2\gamma_n} = \frac34,
        \shortintertext{as well as}
        \Prob{\gamma_n \xi_n \in (-\delta,\delta)} &= \frac{\delta}{\gamma_n} > \frac{n}{n+1} \ge \frac{4}{5}
    \end{align*}
    for $n \ge 4$.
    In addition,
    \begin{equation*}
        \Prob{\gamma_k x_k \in (x_k - \delta, x_k + \delta)} \le \Prob{\gamma_k x_k \in (-\delta,\delta)} \qquad \text{for all }k \in \N.
    \end{equation*}
    Consequently,
    \begin{equation*}
        \begin{aligned}[t]
            \frac{\mu_\gamma(B^\delta(x))}{\mu_\gamma(B^\delta(0))}
            &= \prod_{k=1}^\infty \frac{\Prob{\gamma_k \xi_k \in (x_k - \delta, x_k + \delta)}}
            {\Prob{\gamma_k \xi_k \in (-\delta, \delta)}} \\
            &\le \frac{\Prob{\gamma_n \xi_n \in (x_n - \delta, x_n + \delta)}}
            {\Prob{\gamma_n \xi_n \in (-\delta, \delta)}}
            \le \frac{3}{4}\cdot\frac{5}{4} = \frac{15}{16}.
        \end{aligned}
    \end{equation*}
    \Cref{prop:maxprobzero} now yields that
    \begin{equation*}
        \limsup_{\delta \to 0} \frac{\mu_\gamma(B^\delta(x))}{M^\delta}
        = \limsup_{\delta \to 0} \frac{\mu_\gamma(B^\delta(x))}{\mu_\gamma(B^\delta(0))} \le \frac{15}{16} < 1. 
    \end{equation*}
    and hence that $x$ is not a strong mode.
\end{proof}

On the other hand, every point in $\Eg$ is a generalized mode and vice versa.
\begin{theorem} \label{thm:gmodesprior}
    A point $x \in X$ is a generalized mode of $\mu_\gamma$ if and only if $x \in \Eg$.
\end{theorem}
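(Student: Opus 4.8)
The plan is to prove the two implications separately, using the metric projections $P^\delta$ and the ball-probability identities established above. For the sufficiency direction ($x \in \Eg \Rightarrow$ generalized mode) I would apply \cref{prop:curvecond} to the explicit family $w^\delta := P^\delta x \in \Egd$. By \cref{lem:projconv} we have $w^\delta \to x$ in $X$ as $\delta \to 0$, which is precisely where the hypothesis $x \in \Eg$ enters. Since $w^\delta \in \Egd$, \cref{cor:Egdmaxprob} gives $\mu_\gamma(B^\delta(w^\delta)) = J_\gamma^\delta(w^\delta) = \max_{z\in X} J_\gamma^\delta(z) = M^\delta$, so the quotient $\mu_\gamma(B^\delta(w^\delta))/M^\delta$ equals $1$ identically and in particular converges to $1$. \cref{prop:curvecond} then yields at once that $x$ is a generalized mode.

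For the necessity direction (generalized mode $\Rightarrow x \in \Eg$) I would argue by contraposition. Suppose $x \notin \Eg$, so there is an index $m \in \N$ with $\abs{x_m} > \gamma_m \ge 0$; set $\epsilon := \abs{x_m} - \gamma_m > 0$. The crucial observation is that $\gamma_m \xi_m$ is supported in $[-\gamma_m,\gamma_m]$, so any ball whose $m$-th coordinate interval is bounded away from $[-\gamma_m,\gamma_m]$ has probability zero. Concretely, fix the sequence $\delta_n := 1/n \to 0$ and let $\{w_n\}_{n\in\N}$ be an \emph{arbitrary} sequence with $w_n \to x$. For $n$ large enough that $\delta_n < \epsilon/2$ and $\norm{w_n - x}_\infty < \epsilon/2$, we have $\abs{[w_n]_m} > \gamma_m + \epsilon/2$, whence the interval $([w_n]_m - \delta_n, [w_n]_m + \delta_n)$ lies entirely outside $[-\gamma_m,\gamma_m]$ and thus $\Prob{\gamma_m \xi_m \in ([w_n]_m - \delta_n, [w_n]_m + \delta_n)} = 0$. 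The product formula of \cref{prop:priorballprob} then forces $\mu_\gamma(B^{\delta_n}(w_n)) = 0$, while $M^{\delta_n} > 0$ by \cref{prop:maxprobzero}, so the ratio in \eqref{eq:gmode_conv} is eventually $0$ and cannot converge to $1$. Since this holds for \emph{every} approximating sequence relative to the fixed $\{\delta_n\}$, the point $x$ fails \cref{def:gmode} and is therefore not a generalized mode.

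I expect the necessity direction to be the more delicate part, though only mildly so: the bookkeeping must track the sign of $[w_n]_m$ (the cases $[w_n]_m > 0$ and $[w_n]_m < 0$ being symmetric) and must confirm that the degenerate case $\gamma_m = 0$, where the support of $\gamma_m\xi_m$ collapses to $\{0\}$ and $x_m \ne 0$, is covered uniformly with the case $\gamma_m > 0$. Both are subsumed by the single statement that the $m$-th coordinate interval avoids $[-\gamma_m,\gamma_m]$. The sufficiency direction, by contrast, is essentially a direct assembly of \cref{lem:projconv,cor:Egdmaxprob,prop:curvecond} and should present no real difficulty.
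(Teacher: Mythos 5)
Your proposal is correct and follows essentially the same route as the paper: the sufficiency direction is the identical assembly of \cref{lem:projconv}, \cref{cor:Egdmaxprob} (the paper cites \cref{prop:maxprobzero,lem:projconst} directly), and \cref{prop:curvecond}; the necessity direction uses the same observation that the $m$-th coordinate interval of any small ball near $x$ misses the support $[-\gamma_m,\gamma_m]$, forcing the ball probability to vanish. The only cosmetic difference is that the paper first shows $\mu_\gamma(B^\delta(x))=0$ for $\delta<\delta_0$ and then uses ball inclusion for nearby centers, whereas you track the coordinate $[w_n]_m$ directly -- both are sound.
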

\begin{proof}
    Assume first that $x \in \Eg$ and set $w^\delta := P^\delta x \in \Egd$ for all $\delta > 0$. Then, $w^\delta \to x$ as $\delta \to 0$ by \cref{lem:projconv} and $\mu_\gamma(B^\delta(w^\delta)) = \mu_\gamma(B^\delta(0))$ by \cref{lem:projconst}, so that
    \begin{equation*}
        \frac{\mu_\gamma(B^\delta(w^\delta))}{\mu_\gamma(B^\delta(0))} = 1\quad \text{for all }\delta > 0.
    \end{equation*}
    Since  $M^\delta := \max_{x \in X} \mu_\gamma(B^\delta(x)) = \mu_\gamma(B^\delta(0))$ by \cref{prop:maxprobzero}, it follows from \cref{prop:curvecond} that $x$ is a generalized mode.

    Conversely, assume that $x \in X \setminus \Eg$. Then there exists an $m \in \N$ with $\abs{x_m} > \gamma_m$. Taking now $\delta_0 := \abs{x_m} - \gamma_m > 0$, we have that
    \begin{equation*}
        \Prob{\gamma_m \xi_m \in (x_m - \delta, x_m + \delta)} = 0 \qquad \text{for all }\delta \in (0,\delta_0) 
    \end{equation*}
    and hence that
    \begin{equation*}
        \mu_\gamma(B^\delta(x))
        = \prod_{k=1}^\infty \Prob{\gamma_k \xi_k \in (x_k - \delta, x_k + \delta)} = 0 \quad \text{for all }\delta \in (0, \delta_0).
    \end{equation*}
    This implies that 
    \begin{equation*}
        \frac{\mu_\gamma(B^\delta(w))}{M^\delta} = 0 \quad \text{for all }\delta \in \left(0, \frac{\delta_0}{2}\right)\text{ and }w \in B^{\frac{\delta_0}{2}}(x), 
    \end{equation*}
    and hence $x$ cannot be a generalized mode.
\end{proof}

\section{Variational characterization of generalized MAP estimates}
\label{sec:MAP}

In this section, we consider Bayesian inverse problems with uniform priors and characterize the corresponding generalized MAP estimates -- i.e., the generalized modes of the posterior distribution -- as minimizers of an appropriate objective functional. 

Let $X$ be the separable Banach space defined by \eqref{eq:Xdef} and choose as prior $\mu_0$ the uniform probability distribution $\mu_\gamma$ defined by \eqref{eq:prior_def} for some non-negative sequence of weights $\gamma_k\to0$.
We assume that for given data $y$ from a Banach space $Y$ the posterior distribution $\mu^y$ satisfies $\mu^y \ll \mu_0$ and can be expressed as
\begin{equation}
    \label{eq:posterior}
    \mu^y(A) = \frac{1}{Z(y)} \int_{A} \exp(-\Phi(x;y))\mu_0(dx)
\end{equation}
for all $A \in \mathcal{B}(X)$, where
\begin{equation*}
    Z(y) := \int_X \exp(-\Phi(x;y))\mu_0(dx)
\end{equation*}
is a positive and finite normalization constant and $\Phi$: $X \times Y \to \R$ is the likelihood. This way, $\mu^y$ constitutes a probability measure on $X$.
Throughout this section, we fix $y \in Y$ and abbreviate $\Phi(x;y)$ by $\Phi(x)$.
We make the following assumption on the likelihood.
\begin{assumption} \label{ass:likelihood}
    The function $\Phi: X \to \R$ is Lipschitz continuous on bounded sets, i.e., for every $r>0$, there exists $L=L_r> 0$ such that for all $x_1,x_2 \in X$ with $\norm{x_1}_X, \norm{x_2}_X \le r$ we have
    \begin{equation*}
        \abs{\Phi(x_1)-\Phi(x_2)} \leq L \norm{x_1-x_2}_X.
    \end{equation*}
\end{assumption}

In \cref{thm:gmodesprior}, we have seen that the indicator function $\iota_\Eg$ of $\Eg$ in the sense of convex analysis, i.e.,
\begin{equation*}
    \iota_\Eg:X\to\Rbar:= \R\cup\{\infty\},
    \qquad \iota_\Eg(x) := 
    \begin{cases}
        0,	& \text{if }x \in \Eg, \\
        \infty,	& \text{otherwise}.
    \end{cases}
\end{equation*}
is the suitable functional to minimize in order to find the generalized modes of the prior measure $\mu_0$.
In contrast, the common Onsager--Machlup functional is not defined for $\mu_0$, as $\mu_0$ is not quasi-invariant with respect to shifts along any direction $x \in X \setminus \{0\}$. The goal of this section is to relate a similar functional that characterizes generalized MAP estimates to a suitably generalized limit of small ball probabilities.
Specifically, we define the functional
\begin{equation}\label{eq:onsagermachlup}
    I:X\to\Rbar,\qquad  I(x) := \Phi(x) + \iota_\Eg(x) = 
    \begin{cases}
        \Phi(x), & \text{if }x \in \Eg, \\
        \infty, & \text{otherwise}.
    \end{cases}
\end{equation}
Furthermore, for $\delta>0$ we denote by
\begin{equation*}
    J^\delta(x) := \mu^y(B^\delta(x)) \qquad\text{for all }x \in X 
\end{equation*}
the small ball probabilities of the posterior measure. Let $x^\delta$ denote a (not necessarily unique) maximizer of $J^\delta$ in $\Egd$, i.e.,
\begin{equation*}
    J(x^\delta) = \max_{x \in \Egd} J^\delta(x).
\end{equation*}
What we prove below (in \cref{thm:mainthm}) is the following:
\begin{enumerate}
    \item $\{x^\delta\}_{\delta > 0}$ contains a convergent subsequence (because $\Eg$ is compact), and the limit of the convergent subsequence minimizes $I$.
    \item Any minimizer of $I$ is a generalized MAP estimate and vice versa.
\end{enumerate}
We first demonstrate the necessity to work with subsequences by an example which shows that the sequence $\{x^\delta\}_{\delta>0}$ may not have a unique limit.
\begin{example}\label{ex:cluster}
    Define $g(0) = 0$,
    \begin{equation*}
        g(x) = \frac34 \frac1{2^n}, \quad \text{ if} \quad \frac1{2^{n+1}}
        < \abs{x} \le \left(\frac12\right)^n, \quad n \in \Z 
    \end{equation*}
    and consider a probability measure $\mu$ on $\R$ with density $f$ with respect to the Lebesgue measure, defined via 
    \begin{equation*}
        \tilde f(x) := \max \left\{ 1 - g(x - 1), 1 - \sqrt{2}g\left(\frac{x + 1}{\sqrt{2}}\right), 0 \right\}, \qquad f(x) := \frac{\tilde f(x)}{\int_{\R} \tilde f(x)\,dx}.
    \end{equation*}
    To find the set of points $x$ for which the probability of $B^\delta(x)$ is maximal under $\mu$ for given $\delta\in (0,1)$, first let $n\in \N$ be such that $\frac1{2^{n+1}}< \delta \leq \frac1{2^n}$. By a case distinction, one can then verify that
    \begin{equation*}
        \argmax_{x \in \R} \mu(B^\delta(x)) =
        \begin{cases}
            \left[-1 - \frac{\sqrt{2}}{2^{n+1}} + \delta, -1 + \frac{\sqrt{2}}{2^{n+1}} - \delta\right]
            & \text{if } \delta \in \left(\frac{1}{2^{n+1}},\frac{\sqrt{2}}{2^{n+1}}\right],\\[1ex] 
            \left[1 - \frac1{2^n} + \delta, 1 + \frac1{2^n} - \delta\right]
            & \text{if } \delta \in \left(\frac{\sqrt{2}}{2^{n+1}},\frac1{2^{n}}\right],
        \end{cases}
    \end{equation*}
    Hence, any family $\{x^\delta\}_{\delta > 0}$ of maximizers $x^\delta \in \argmax_{x \in \R} \mu(B^\delta(x))$ has the two cluster points $-1$ and $1$; see \cref{fig:example_cluster}.
\end{example}
\begin{figure}
    \centering
    \begin{tikzpicture}

\begin{axis}[%
width=0.8\textwidth,
height=0.2\textheight,
scale only axis,
xmin=-2.5,
xmax=2.5,
ymin=-0.1,
ymax=1.2,
]
\addplot [color=DarkBlue,line width=1.0pt]
  table[row sep=crcr]{%
-2.5	0\\
-1.7096048024012	0\\
-1.70710355177589	0.46966991411009\\
-1.3544272136068	0.46966991411009\\
-1.35192596298149	0.734834957055045\\
-1.1768384192096	0.734834957055045\\
-1.17433716858429	0.867417478527523\\
-1.08929464732366	0.867417478527523\\
-1.08679339669835	0.933708739263761\\
-1.04427213606803	0.933708739263761\\
-1.04177088544272	0.966854369631881\\
-1.02426213106553	0.966854369631881\\
-1.02176088044022	0.98342718481594\\
-1.01175587793897	0.98342718481594\\
-1.00925462731366	0.99171359240797\\
-1.00675337668834	0.99171359240797\\
-1.00425212606303	0.995856796203985\\
-1.00175087543772	0.997928398101993\\
-0.999249624812407	0.998964199050997\\
-0.994247123561781	0.99171359240797\\
-0.989244622311156	0.99171359240797\\
-0.986743371685843	0.98342718481594\\
-0.979239619809905	0.98342718481594\\
-0.976738369184592	0.966854369631881\\
-0.956728364182091	0.966854369631881\\
-0.954227113556779	0.933708739263761\\
-0.911705852926463	0.933708739263761\\
-0.909204602301151	0.867417478527523\\
-0.82416208104052	0.867417478527523\\
-0.821660830415208	0.734834957055045\\
-0.646573286643322	0.734834957055045\\
-0.644072036018009	0.46966991411009\\
-0.293896948474237	0.46966991411009\\
-0.291395697848924	0\\
-0.00125062531265652	0\\
0.00125062531265652	0.25\\
0.498999499749875	0.25\\
0.501500750375187	0.625\\
0.74912456228114	0.625\\
0.751625812906453	0.8125\\
0.874187093546773	0.8125\\
0.876688344172086	0.90625\\
0.93671835917959	0.90625\\
0.939219609804903	0.953125\\
0.966733366683342	0.953125\\
0.969234617308654	0.9765625\\
0.98424212106053	0.9765625\\
0.986743371685843	0.98828125\\
0.991745872936468	0.98828125\\
0.994247123561781	0.994140625\\
0.999249624812406	0.999267578125\\
1.00175087543772	0.99853515625\\
1.00425212606303	0.994140625\\
1.00675337668834	0.994140625\\
1.00925462731366	0.98828125\\
1.01425712856428	0.98828125\\
1.01675837918959	0.9765625\\
1.02926463231616	0.9765625\\
1.03176588294147	0.953125\\
1.06178089044522	0.953125\\
1.06428214107054	0.90625\\
1.12431215607804	0.90625\\
1.12681340670335	0.8125\\
1.24937468734367	0.8125\\
1.25187593796898	0.625\\
1.49949974987494	0.625\\
1.50200100050025	0.25\\
1.99974987493747	0.25\\
2.00225112556278	0\\
2.5	0\\
};

\end{axis}
\end{tikzpicture}%
    \caption{probability density $\tilde f$ constructed in \cref{ex:cluster} for which the family of maximizers $\{x^\delta\}_{\delta>0}$ has two cluster points $\hat x_1=-1$ and $\hat x_2=1$}\label{fig:example_cluster}
\end{figure}
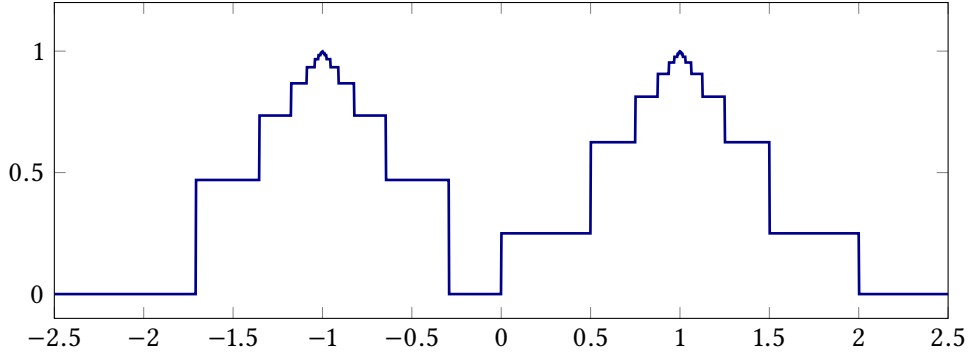

We begin our analysis by collecting some auxiliary results on $\Phi$ and $I$.
\begin{lemma} \label{lem:phibounded}
    If \cref{ass:likelihood} holds, then there exists a constant $M>0$ such that
    \begin{equation}
        \abs{\Phi(x)} \leq M \qquad\text{for all }x\in \Eg.
    \end{equation}
\end{lemma}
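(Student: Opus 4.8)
The plan is to combine the compactness of $\Eg$ (from \cref{prop:Egclosed}) with the local Lipschitz continuity of $\Phi$ (from \cref{ass:likelihood}). The key observation is that $\Eg$ is a bounded set: indeed, since $\gamma_k \to 0$, the supremum $\sup_{k\in\N} \gamma_k$ is finite, so every $x \in \Eg$ satisfies $\norm{x}_\infty \le \sup_{k\in\N}\gamma_k =: r < \infty$. This bound $r$ is exactly the radius needed to invoke \cref{ass:likelihood}.

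First I would fix an arbitrary reference point in $\Eg$ --- the most convenient choice is the origin $0$, which belongs to $\Eg$ since $\abs{0} = 0 \le \gamma_k$ for all $k$. With $r := \sup_{k\in\N}\gamma_k$, \cref{ass:likelihood} supplies a Lipschitz constant $L = L_r$ valid on the ball $\overline{B^r(0)}$, which contains all of $\Eg$. Then for any $x \in \Eg$ the Lipschitz estimate gives
\begin{equation*}
    \abs{\Phi(x)} \le \abs{\Phi(x) - \Phi(0)} + \abs{\Phi(0)} \le L \norm{x - 0}_\infty + \abs{\Phi(0)} \le L r + \abs{\Phi(0)}.
\end{equation*}
Setting $M := L r + \abs{\Phi(0)}$ yields the claimed uniform bound, and this $M$ is independent of the particular $x \in \Eg$.

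There is essentially no hard part here; the only subtlety worth stating explicitly is that $r = \sup_{k} \gamma_k$ is finite precisely because $\gamma_k \to 0$ forces the sequence $\{\gamma_k\}$ to be bounded. Alternatively, one could avoid choosing a base point altogether and argue more abstractly: $\Phi$ is continuous by \cref{ass:likelihood}, hence bounded on the compact set $\Eg$ by \cref{prop:Egclosed}, so it attains a finite maximum and minimum there. I prefer the explicit Lipschitz computation since it produces a concrete constant $M$ and does not require separately invoking the extreme value theorem, but either route gives the result in a single short step.
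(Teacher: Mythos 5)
Your proof is correct and follows essentially the same route as the paper: both establish that $\Eg$ is bounded (the paper via the compactness from \cref{prop:Egclosed}, you directly via $\norm{x}_\infty \le \sup_k \gamma_k < \infty$) and then apply the triangle inequality together with the Lipschitz estimate relative to the base point $0 \in \Eg$ to obtain $\abs{\Phi(x)} \le L r + \abs{\Phi(0)} =: M$. The difference in how boundedness is justified is immaterial.
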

\begin{proof}
    Since $\Eg$ is compact and therefore bounded by \cref{prop:Egclosed}, there exists some $R\geq 0$ such that $\norm{x}_\infty\leq R$ for all $x\in \Eg$. The Lipschitz continuity of $\Phi$ on bounded sets then implies that for any $x\in \Eg$,
    \begin{equation*}
        \abs{\Phi(x)} 
        \le \abs{\Phi(x) - \Phi(0)} + \abs{\Phi(0)}
        \le L\norm{x}_\infty + \abs{\Phi(0)}
        \le LR + \abs{\Phi(0)}=:M.
        \qedhere
    \end{equation*}
\end{proof}
\begin{lemma} \label{lem:exmin}
    If \cref{ass:likelihood} holds, then there exists an $\bar x \in \Eg$ such that
    \begin{equation*}
        I(\bar x) = \min_{x\in X} I(x).
    \end{equation*}
\end{lemma}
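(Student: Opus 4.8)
The plan is to use the direct method of the calculus of variations, exploiting the compactness of $\Eg$ established in \cref{prop:Egclosed} together with the Lipschitz continuity of $\Phi$ from \cref{ass:likelihood}. Since $I(x) = \infty$ for $x \notin \Eg$, minimizing $I$ over $X$ is equivalent to minimizing $\Phi$ over $\Eg$. First I would note that this restricted problem is well-posed: by \cref{lem:phibounded}, $\Phi$ is bounded below on $\Eg$ (indeed $\abs{\Phi(x)} \le M$ there), so the infimum $m := \inf_{x \in \Eg} \Phi(x)$ is finite. Thus $I$ is proper and bounded below.

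Next I would take a minimizing sequence $\{x_n\}_{n\in\N} \subset \Eg$ with $\Phi(x_n) \to m$. Since $\Eg$ is compact by \cref{prop:Egclosed}, there is a subsequence (not relabeled) converging to some $\bar x \in \Eg$ in the norm of $X$. The key step is then to pass to the limit in $\Phi(x_n) \to m$ using continuity of $\Phi$. Here \cref{ass:likelihood} is exactly what is needed: because $\Eg$ is bounded (being compact), $\Phi$ is Lipschitz continuous on a ball containing $\Eg$, hence in particular continuous along the convergent subsequence. Therefore $\Phi(\bar x) = \lim_{n\to\infty} \Phi(x_n) = m$, so $\bar x$ attains the infimum. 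Since $\bar x \in \Eg$ gives $\iota_\Eg(\bar x) = 0$, we conclude $I(\bar x) = \Phi(\bar x) = m = \min_{x \in X} I(x)$.

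I do not expect any serious obstacle here; this is a textbook application of the direct method, and both ingredients---compactness of the feasible set and (sequential) continuity of the objective---are already available from the preceding results. The only point requiring a small amount of care is that $\Phi$ is a priori only assumed Lipschitz on \emph{bounded} sets rather than globally, so one must first invoke compactness (and hence boundedness) of $\Eg$ to secure the uniform Lipschitz constant $L = L_R$ on a ball of some radius $R \ge \sup_{x\in\Eg}\norm{x}_\infty$ before asserting continuity along the minimizing sequence. This is precisely the structure already used in the proof of \cref{lem:phibounded}, so the argument reuses the same radius $R$.
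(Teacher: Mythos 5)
Your proposal is correct and is essentially the paper's own argument: the paper reduces to minimizing $\Phi$ over the compact set $\Eg$ and invokes the Weierstrass theorem, which your minimizing-sequence argument simply unpacks. The point you flag about needing boundedness of $\Eg$ to secure the Lipschitz constant is handled the same way in the paper.
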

\begin{proof}
    As $I(x) = \infty$ for all $x \in X \setminus \Eg$, we only need to consider $I|_\Eg = \Phi|_\Eg$, which is Lipschitz continuous on the bounded set $\Eg$ by \cref{ass:likelihood}.
    Furthermore, the set $\Eg$ is nonempty and by \cref{prop:Egclosed} closed and compact.
    The Weierstrass Theorem therefore implies that $I$ attains its minimum in a point $\bar x \in \Eg$.
\end{proof}

We also need the following results on the small ball probabilities of the posterior. 
\begin{lemma} \label{lem:Jnotzero}
    Assume that \cref{ass:likelihood} holds. Then for every $\delta>0$ there exists a constant $c(\delta) > 0$ such that 
    \begin{equation*}
        J^\delta(x) \ge c(\delta) >0\qquad \text{for all }x \in \Egd. 
    \end{equation*}
\end{lemma}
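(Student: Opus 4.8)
The plan is to bound $J^\delta(x)$ from below by combining the boundedness of $\Phi$ on $\Eg$ with the positivity of the prior small ball probabilities on $\Egd$. Writing out the posterior \eqref{eq:posterior}, for any $x \in \Egd$ we have
\[
    J^\delta(x) = \mu^y(B^\delta(x)) = \frac{1}{Z(y)} \int_{B^\delta(x)} \exp(-\Phi(z)) \, \mu_0(\di z).
\]
Since the prior $\mu_0 = \mu_\gamma$ is concentrated on $\Eg$ (because $\xi \in \Eg$ almost surely by construction of \eqref{eq:xidef}), the integral is effectively taken over $B^\delta(x) \cap \Eg$, and there \cref{lem:phibounded} gives the uniform lower bound $\exp(-\Phi(z)) \ge e^{-M}$.

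First I would therefore estimate
\[
    J^\delta(x) \ge \frac{1}{Z(y)} \int_{B^\delta(x) \cap \Eg} \exp(-\Phi(z)) \, \mu_0(\di z) \ge \frac{e^{-M}}{Z(y)} \, \mu_0(B^\delta(x) \cap \Eg) = \frac{e^{-M}}{Z(y)} \, \mu_\gamma(B^\delta(x)),
\]
where the final equality again uses that $\mu_0$ assigns no mass outside $\Eg$. Next I would invoke the earlier results on the prior: by \cref{cor:Egdmaxprob}, for $x \in \Egd$ the prior small ball probability $\mu_\gamma(B^\delta(x)) = J_\gamma^\delta(x)$ equals the maximal value $\max_{z \in X} J_\gamma^\delta(z)$, which by \cref{prop:maxprobzero} equals $J_\gamma^\delta(0) > 0$ and is in particular independent of the choice of $x \in \Egd$. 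Setting $c(\delta) := e^{-M} J_\gamma^\delta(0)/Z(y) > 0$ then yields $J^\delta(x) \ge c(\delta)$ uniformly over $\Egd$, as claimed.

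The argument is essentially routine once the right pieces are assembled; the only point requiring genuine care is the reduction of the integral to $\Eg$, i.e.\ recognizing that although the ball $B^\delta(x)$ may stick out of $\Eg$, the prior assigns no mass to its exterior, so the lower bound $e^{-M}$ on the integrand (valid only on $\Eg$ via \cref{lem:phibounded}) may be used without loss. The uniformity of the constant over $\Egd$ is then immediate from the constancy of $J_\gamma^\delta$ on $\Egd$ established in \cref{lem:projconst} and \cref{cor:Egdmaxprob}, so no compactness or limiting argument is needed at this stage.
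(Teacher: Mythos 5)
Your proof is correct and follows essentially the same route as the paper's: bound $\exp(-\Phi)$ from below by $e^{-M}$ via \cref{lem:phibounded} and invoke the constancy and positivity of the prior small ball probabilities on $\Egd$ from \cref{lem:projconst}, \cref{prop:maxprobzero}, and \cref{cor:Egdmaxprob}. Your explicit restriction of the integral to $B^\delta(x)\cap\Eg$ (where the bound on $\Phi$ actually applies, since $\mu_0$ is concentrated on $\Eg$) is a detail the paper's proof leaves implicit, but it is not a different argument.
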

\begin{proof}
    First, $\mu_0(B^\delta(x)) = \mu_0(B^\delta(0)) > 0$ for all $x \in \Egd$ by \cref{lem:projconst,cor:Egdmaxprob}.
    Hence we can estimate using \cref{lem:phibounded} that
    \begin{equation*}
        J^\delta(x)
        =  \frac 1{Z(y)} \int_{B^\delta(x)} \exp(-\Phi(x)) \mu_0(\di x)
        \ge \frac{1}{Z(y)}e^{-M}\mu_0(B^\delta(0))
        =: c(\delta) > 0,
    \end{equation*}
    because $Z(y)$ is positive and finite by assumption.
\end{proof}
We next show show that $J^\delta$ is maximized within $E^\delta_\gamma$ by making use of the metric projection $P^\delta : X \to E^\delta_\gamma$ defined in \eqref{eq:projection}.
\begin{lemma} \label{lem:projinc}
    For all $\delta > 0$, we have that
    \begin{equation}
        J^\delta(P^\delta x) \ge J^\delta(x)\qquad\text{for all }x\in X.
    \end{equation}
\end{lemma}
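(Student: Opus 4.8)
The plan is to exploit that $J^\delta$ is the integral of a \emph{nonnegative} integrand over the ball, so that enlarging the (effective) domain of integration can only increase its value. Using the posterior density \eqref{eq:posterior} I would write
\[
    J^\delta(x) = \frac{1}{Z(y)}\int_{B^\delta(x)} e^{-\Phi(z)}\,\mu_0(\di z),
\]
and then reduce the claim to showing that $B^\delta(x)$ is contained in $B^\delta(P^\delta x)$ up to a $\mu_0$-null set. Since $e^{-\Phi} \ge 0$ and $Z(y) > 0$, such an inclusion immediately gives $\int_{B^\delta(x)} e^{-\Phi}\,\di\mu_0 \le \int_{B^\delta(P^\delta x)} e^{-\Phi}\,\di\mu_0$ and hence the inequality.

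The heart of the argument is therefore the inclusion modulo null sets. Here I would use that $\mu_0$ is concentrated on $\Eg$ (as $\abs{\gamma_k\xi_k}\le\gamma_k$ almost surely) and that the set $N := (X \setminus \Eg) \cup \bigcup_{k:\,\gamma_k > 0}\{z : \abs{z_k} = \gamma_k\}$ is $\mu_0$-null, because each factor $\gamma_k\xi_k$ is uniform on $[-\gamma_k,\gamma_k]$, so that $\Prob{\abs{\gamma_k\xi_k} = \gamma_k} = 0$. It then suffices to prove the purely coordinatewise statement that if $z \notin N$ satisfies $\abs{z_k - x_k} < \delta$ for all $k\in\N$, then $\abs{z_k - [P^\delta x]_k} < \delta$ for all $k\in\N$. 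This is checked against the cases of the explicit formula \eqref{eq:projection}: in the interior case $[P^\delta x]_k = x_k$ it is trivial; whenever $\gamma_k \le \delta$ the $k$-th coordinate is collapsed, $[P^\delta x]_k = 0$, and $\abs{z_k} < \gamma_k \le \delta$ settles it; and in the two active-bound cases, e.g.\ $x_k > \gamma_k - \delta > 0$ with $[P^\delta x]_k = \gamma_k - \delta$, one combines $z_k < \gamma_k$ with $z_k > x_k - \delta > \gamma_k - 2\delta$ to land inside the interval $([P^\delta x]_k - \delta,\, [P^\delta x]_k + \delta)$, the remaining case being symmetric.

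Putting these together, $B^\delta(x) \setminus B^\delta(P^\delta x) \subseteq N$ is $\mu_0$-null, so that
\[
    \int_{B^\delta(x)} e^{-\Phi}\,\di\mu_0 = \int_{B^\delta(x)\cap B^\delta(P^\delta x)} e^{-\Phi}\,\di\mu_0 \le \int_{B^\delta(P^\delta x)} e^{-\Phi}\,\di\mu_0,
\]
which upon dividing by $Z(y)$ is exactly $J^\delta(x) \le J^\delta(P^\delta x)$.

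I expect the only real (and modest) obstacle to be the careful bookkeeping of the boundary: the inclusion $B^\delta(x) \subseteq B^\delta(P^\delta x)$ genuinely fails at points with $\abs{z_k} = \gamma_k$, where $\abs{z_k - [P^\delta x]_k}$ may equal exactly $\delta$, so the passage to the null set $N$ is essential rather than cosmetic. The degenerate edge $\gamma_k = \delta$ (where $\Egd$ already forces the $k$-th coordinate to vanish) is consistent with the collapsed-coordinate estimate above and causes no difficulty.
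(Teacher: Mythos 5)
Your proof is correct and follows essentially the same route as the paper's: both establish that $B^\delta(x)\setminus B^\delta(P^\delta x)$ is a $\mu_0$-null set and then conclude by monotonicity of the integral of the nonnegative integrand $e^{-\Phi}$. The only cosmetic difference is in the boundary bookkeeping -- the paper passes through the closed ball $\overline{B^\delta(P^\delta x)}$ and invokes \cref{prop:priorballprob} to discard its boundary, whereas you absorb the offending points into the explicit null set of hyperplanes $\{\abs{z_k}=\gamma_k\}$ and verify the inclusion coordinatewise (where your strict componentwise bounds do yield $\norm{z-P^\delta x}_\infty<\delta$ because $\gamma_k\to 0$ makes the supremum an effective maximum).
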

\begin{proof}
    We note that $B^\delta(x) \setminus \overline{B^\delta(P^\delta x)} \subset X \setminus \Eg$, so that $\mu_0(B^\delta(x) \setminus \overline{B^\delta(P^\delta x)}) = 0$.
    Moreover, $\mu_0(\overline{B^\delta(P^\delta x)} \setminus B^\delta(P^\delta x)) = 0$ by \cref{prop:priorballprob}, so that $\mu_0(B^\delta(x) \setminus B^\delta(P^\delta x)) = 0$ as well.
    With this knowledge we estimate
    \begin{equation*}
        \begin{aligned}[b]
            J^\delta(x) 
            &
            = \frac 1{Z(y)} \int_{B^\delta(x)} \exp(-\Phi(x)) \mu_0(\di x) \\
            &\le \frac 1{Z(y)} \int_{B^\delta(x) \cup B^\delta(P^\delta x)} 
            \exp(-\Phi(x)) \mu_0(\di x) \\
            &= \frac 1{Z(y)} \int_{B^\delta(P^\delta x)} \exp(-\Phi(x)) \mu_0(\di x)
            = J^\delta(P^\delta x).
        \end{aligned} 
        \qedhere
    \end{equation*}
\end{proof}
\begin{corollary}
    For all $\delta>0$,
    \begin{equation*}
        J(x^\delta) := \max_{x \in \Egd} J^\delta(x) =  \max_{x \in X} J^\delta(x).
    \end{equation*}
\end{corollary}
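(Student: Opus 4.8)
The plan is to obtain this as an immediate consequence of the monotonicity of $J^\delta$ under the metric projection $P^\delta$ established in \cref{lem:projinc}, which is the only nontrivial ingredient. The point is that $P^\delta$ maps all of $X$ into $\Egd$ without ever decreasing the posterior ball probability, so the search for a maximizer of $J^\delta$ can be confined to $\Egd$ at no cost.

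Concretely, I would argue as follows. Since $x^\delta \in \Egd \subseteq X$ by definition, one has the trivial bound $J(x^\delta) = \max_{x\in\Egd} J^\delta(x) \le \sup_{x\in X} J^\delta(x)$. For the reverse inequality, fix an arbitrary $x \in X$. By \cref{lem:projinc}, $J^\delta(x) \le J^\delta(P^\delta x)$, and since $P^\delta x \in \Egd$ we further have $J^\delta(P^\delta x) \le \max_{z\in\Egd} J^\delta(z) = J(x^\delta)$. Chaining these two estimates yields $J^\delta(x) \le J(x^\delta)$, and as $x \in X$ was arbitrary this gives $\sup_{x\in X} J^\delta(x) \le J(x^\delta)$. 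Combining both directions, I obtain $\sup_{x\in X} J^\delta(x) = J(x^\delta)$; moreover, because this supremum is attained at $x^\delta \in X$, it is in fact a maximum, which is precisely the claimed identity $\max_{x\in\Egd} J^\delta(x) = \max_{x\in X} J^\delta(x)$.

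There is essentially no obstacle at the level of the corollary itself: the entire difficulty has already been absorbed into \cref{lem:projinc}, whose proof exploits that the symmetric difference $B^\delta(x)\setminus B^\delta(P^\delta x)$ is $\mu_0$-null (it lies in $X\setminus\Eg$ up to a set of measure zero, using \cref{prop:priorballprob}) and that $\mu^y \ll \mu_0$, so that projecting onto $\Egd$ only redistributes the integral of $\exp(-\Phi)$ over a region carrying full posterior mass. Hence the only care required here is bookkeeping: ensuring that the supremum over $X$ is genuinely attained (which the chain of inequalities delivers, since the common value $J(x^\delta)$ is realized at the point $x^\delta \in \Egd \subset X$) rather than merely bounded.
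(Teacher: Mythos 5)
Your argument is correct and follows the same route as the paper: both reduce the claim to \cref{lem:projinc}, observing that $P^\delta$ maps $X$ into $\Egd$ without decreasing $J^\delta$. Your version is slightly more careful in that it establishes attainment of the supremum over $X$ rather than presupposing a maximizer exists, but this is a minor refinement of the identical idea.
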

\begin{proof}
    If $x \in X$ is a maximizer of $J^\delta$, then $P^\delta x \in \Egd$ is a maximizer as well.
\end{proof}

The following proposition indicates that the functional $I$ plays the role of a generalized Onsager--Machlup functional for the posterior distribution $\mu^y$ associated with the uniform prior $\mu_0$ defined by \eqref{eq:prior_def}.
\begin{proposition} \label{lem:postballprob}
    Suppose that \cref{ass:likelihood} holds.
    Let $x_1, x_2 \in \Eg$ and $\{w_1^\delta\}_{\delta > 0}, \{w_2^\delta\}_{\delta > 0} \subset \Eg$ with
    \begin{enumerate}
        \item\label{it:postballprob1} $w_1^\delta, w_2^\delta \in \Egd$ for all $\delta > 0$,
        \item\label{it:postballprob2} $w_1^\delta \to x_1$ and $w_2^\delta \to x_2$ as $\delta \to 0$.
    \end{enumerate}
    Then,
    \begin{equation} \label{eq:OMproperty}
        \lim_{\delta \to 0} \frac{J^\delta(w_1^\delta)}{J^\delta(w_2^\delta)} 
        = \exp(I(x_2) - I(x_1)).
    \end{equation}
\end{proposition}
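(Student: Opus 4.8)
The plan is to exploit the representation \eqref{eq:posterior} of the posterior together with the observation that on a ball of vanishing radius the Lipschitz likelihood $\Phi$ is nearly constant, so that each posterior ball probability $J^\delta(w_i^\delta)$ asymptotically factorizes into $\exp(-\Phi(w_i^\delta))$ times the \emph{prior} ball probability $\mu_0(B^\delta(w_i^\delta))$. Since $x_1,x_2\in\Eg$, we have $I(x_i)=\Phi(x_i)$, so the claimed limit is simply $\exp(\Phi(x_2)-\Phi(x_1))$. First I would write
\begin{equation*}
    J^\delta(w_i^\delta) = \frac{1}{Z(y)}\int_{B^\delta(w_i^\delta)} \exp(-\Phi(x))\,\mu_0(\di x), \qquad i = 1,2,
\end{equation*}
and note that $J^\delta(w_i^\delta)>0$ for every $\delta>0$ by \cref{lem:Jnotzero}, so that the quotient is well defined.

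Next I would establish a uniform Lipschitz sandwich. Because $\Eg$ is compact, hence bounded, by \cref{prop:Egclosed}, there is an $R>0$ with $\norm{x}_\infty \le R$ for all $x\in\Eg$; consequently every $x\in B^\delta(w_i^\delta)$ with $w_i^\delta\in\Eg$ and $\delta\le 1$ satisfies $\norm{x}_\infty\le R+1$. \cref{ass:likelihood} then furnishes a \emph{single} constant $L=L_{R+1}$ valid for all small $\delta$, and for $x\in B^\delta(w_i^\delta)$ one has $\abs{\Phi(x)-\Phi(w_i^\delta)}\le L\delta$, whence
\begin{equation*}
    e^{-L\delta}\,e^{-\Phi(w_i^\delta)} \le e^{-\Phi(x)} \le e^{L\delta}\,e^{-\Phi(w_i^\delta)}\qquad\text{for all } x\in B^\delta(w_i^\delta).
\end{equation*}
Integrating against $\mu_0$ over $B^\delta(w_i^\delta)$ then sandwiches $J^\delta(w_i^\delta)$ between $\tfrac{1}{Z(y)}e^{\mp L\delta}e^{-\Phi(w_i^\delta)}\mu_0(B^\delta(w_i^\delta))$.

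The crucial step — and the point where hypothesis \ref{it:postballprob1} enters — is the cancellation of the prior ball probabilities. Since $w_1^\delta, w_2^\delta\in\Egd$, \cref{cor:Egdmaxprob} yields $\mu_0(B^\delta(w_1^\delta))=\mu_0(B^\delta(w_2^\delta))=\mu_0(B^\delta(0))>0$, so that upon forming the quotient both these factors and the normalization $Z(y)$ drop out, leaving
\begin{equation*}
    e^{-2L\delta}\,e^{\Phi(w_2^\delta)-\Phi(w_1^\delta)} \le \frac{J^\delta(w_1^\delta)}{J^\delta(w_2^\delta)} \le e^{2L\delta}\,e^{\Phi(w_2^\delta)-\Phi(w_1^\delta)}.
\end{equation*}
Finally I would let $\delta\to0$: the factors $e^{\pm 2L\delta}\to1$, while $w_i^\delta\to x_i$ by hypothesis \ref{it:postballprob2} together with the continuity of $\Phi$ (immediate from \cref{ass:likelihood}) gives $\Phi(w_i^\delta)\to\Phi(x_i)$, so both bounds converge to $\exp(\Phi(x_2)-\Phi(x_1))=\exp(I(x_2)-I(x_1))$, which proves the claim.

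The only genuine subtlety — really the heart of the argument — is that the two prior ball probabilities must coincide in order to cancel; this is exactly what hypothesis \ref{it:postballprob1} guarantees via \cref{cor:Egdmaxprob}, and it is precisely what singles out $\iota_\Eg$, rather than any genuine density, as the correct prior contribution to the generalized Onsager--Machlup functional. The remaining estimates are routine once the uniform Lipschitz constant is fixed through the boundedness of $\Eg$; notably, no quasi-invariance of $\mu_0$ is invoked, which is essential since $\mu_0$ possesses none.
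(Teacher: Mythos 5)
Your proof is correct and follows essentially the same route as the paper's: both sandwich $e^{-\Phi}$ over the small balls using the Lipschitz bound from \cref{ass:likelihood} and then cancel the prior ball probabilities, which coincide on $\Egd$ by \cref{lem:projconst} (equivalently \cref{cor:Egdmaxprob}). The only cosmetic difference is that you center the Lipschitz estimate at $w_i^\delta$ and pass to $\Phi(x_i)$ by continuity in the limit, whereas the paper centers at $x_i$ and absorbs $\norm{x_i - w_i^\delta}_\infty$ into the exponential error factor.
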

\begin{proof}
    Let $\delta > 0$. We consider
    \begin{equation*}
        \begin{aligned}[t]
            \frac{J^\delta(w_1^\delta)}{J^\delta(w_2^\delta)}
            &= \frac{\int_{B^\delta(w_1^\delta)} \exp(-\Phi(x)) \mu_0(\di x)}
            {\int_{B^\delta(w_2^\delta)} \exp(-\Phi(x)) \mu_0(\di x)} \\
            &= \exp(\Phi(x_2) - \Phi(x_1)) \frac
            {\int_{B^\delta(w_1^\delta)} \exp(\Phi(x_1) - \Phi(x)) \mu_0(\di x)}
            {\int_{B^\delta(w_2^\delta)} \exp(\Phi(x_2) - \Phi(x)) \mu_0(\di x)},
        \end{aligned}
    \end{equation*}
    which is well-defined by \cref{lem:Jnotzero}.
    By \cref{ass:likelihood},
    \begin{align*}
        \abs{\Phi(x_1) - \Phi(x)} &\le L\norm{x_1 - x}_\infty \le L\left(\norm{x_1 - w_1^\delta}_\infty + \norm{w_1^\delta - x}_\infty\right) \le L\left(\norm{x_1 - w_1^\delta}_\infty + \delta\right)
        \intertext{for all $x \in B^\delta(w_1^\delta)$ and}
        \abs{\Phi(x_2) - \Phi(x)} &\le L\norm{x_2 - x}_\infty \le L\left(\norm{x_2 - w_2^\delta}_\infty + \delta\right)
    \end{align*}
    for all $x \in B^\delta(w_2^\delta)$, where $L$ is the Lipschitz constant of $\Phi$ on the bounded set $\bigcup_{x\in \Eg}B^\delta(x)$.
    It follows that
    \begin{multline*}
        \exp(\Phi(x_2) - \Phi(x_1)) e^{-L\left(\norm{x_1 - w_1^\delta}_\infty + \norm{x_2 - w_2^\delta}_\infty + 2\delta\right)} \frac{\mu_0(B^\delta(w_1^\delta))}{\mu_0(B^\delta(w_2^\delta))}
        \le \frac{J^\delta(w_1^\delta)}{J^\delta(w_2^\delta)} 
        \\
        \le \exp(\Phi(x_2) - \Phi(x_1)) e^{L\left(\norm{x_1 - w_1^\delta}_\infty + \norm{x_2 - w_2^\delta}_\infty + 2\delta\right)} \frac{\mu_0(B^\delta(w_1^\delta))}{\mu_0(B^\delta(w_2^\delta))}.
    \end{multline*}
    Now \cref{lem:projconst} implies that $\mu_0(B^\delta(w_1^\delta)) = \mu_0(B^\delta(w_2^\delta))$ for all $\delta>0$, and hence taking the limit $\delta\to 0$ and using \ref{it:postballprob2} together with the fact that $I(x)=\Phi(x)$ for all $x\in \Eg$ yields the claim.
\end{proof}
Note that \cref{lem:projconv} immediately implies that \eqref{eq:OMproperty} also holds true for the prior distribution $\mu_0$ if $I$ is replaced by $\iota_\Eg$.
However, it is an open question how \cref{lem:postballprob} can be generalized to cover a wider class of distributions while sustaining the connection to generalized MAP estimates described in the following theorem, which is the main result of this section.
\begin{theorem} \label{thm:mainthm}
    Suppose \cref{ass:likelihood} holds. Then the following hold:
    \begin{enumerate}
        \item\label{it:mainthm:exist} For every $\{\delta_n\}_{n\in\N} \subset (0,\infty)$, the sequence $\{x^{\delta_n}\}_{n\in\N}$ contains a subsequence that converges strongly in $X$ to some $\bar{x} \in \Eg$.
        \item\label{it:mainthm:min} Any cluster point $\bar{x}\in\Eg$ of $\{x^{\delta_n}\}_{n\in\N}$ is a minimizer of $I$.
        \item\label{it:mainthm:MAP} A point $\hat{x} \in X$ is a minimizer of $I$ if and only if it is a generalized MAP estimate for $\mu^y$.
    \end{enumerate}
\end{theorem}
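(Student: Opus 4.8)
The three parts build on the auxiliary results already established, so the plan is to prove them roughly in order, leveraging compactness of $\Eg$, the generalized Onsager--Machlup property in \cref{lem:postballprob}, and the characterization of generalized modes of the prior in \cref{thm:gmodesprior}.

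For part \ref{it:mainthm:exist}, I would argue by compactness. Each $x^{\delta_n}$ lies in $\Egd[\delta_n] \subset \Eg$, and $\Eg$ is compact by \cref{prop:Egclosed}; hence $\{x^{\delta_n}\}_{n\in\N}$ admits a subsequence converging strongly in $X$ to some $\bar x \in \Eg$ (using that $\Eg$ is closed). This is the easy part and requires no analytic input beyond compactness.

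For part \ref{it:mainthm:min}, let $\bar x \in \Eg$ be a cluster point, along a subsequence $\delta_n \to 0$ with $x^{\delta_n} \to \bar x$. I would compare $\{x^{\delta_n}\}$ against the projected approximants of an arbitrary minimizer. Let $\tilde x \in \Eg$ be a minimizer of $I$, which exists by \cref{lem:exmin}, and set $w^{\delta_n} := P^{\delta_n}\tilde x \in \Egd[\delta_n]$, so that $w^{\delta_n} \to \tilde x$ by \cref{lem:projconv}. Applying \cref{lem:postballprob} with $x_1 = \bar x$, $w_1^{\delta_n} = x^{\delta_n}$ and $x_2 = \tilde x$, $w_2^{\delta_n} = w^{\delta_n}$ yields
\begin{equation*}
    \lim_{n\to\infty} \frac{J^{\delta_n}(x^{\delta_n})}{J^{\delta_n}(w^{\delta_n})} = \exp\bigl(I(\tilde x) - I(\bar x)\bigr).
\end{equation*}
But $x^{\delta_n}$ maximizes $J^{\delta_n}$ over $\Egd[\delta_n]$ and $w^{\delta_n} \in \Egd[\delta_n]$, so the ratio is $\ge 1$ for every $n$, forcing $I(\tilde x) - I(\bar x) \ge 0$, i.e.\ $I(\bar x) \le I(\tilde x) = \min I$. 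Hence $\bar x$ is itself a minimizer. The subtlety to check here is that $\bar x$ is a legitimate choice of $x_1$ in \cref{lem:postballprob} with approximating sequence $x^{\delta_n}$ satisfying hypotheses \ref{it:postballprob1}--\ref{it:postballprob2}, which holds since $x^{\delta_n} \in \Egd[\delta_n]$ and $x^{\delta_n} \to \bar x$.

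For part \ref{it:mainthm:MAP}, I would prove both implications using \cref{lem:postballprob} together with the prior mode characterization. Suppose first $\hat x$ minimizes $I$; then in particular $\hat x \in \Eg$, so $\hat x$ is a generalized mode of the prior by \cref{thm:gmodesprior}. For a sequence $\delta_n \to 0$, set $w_n := P^{\delta_n}\hat x \in \Egd[\delta_n]$, so $w_n \to \hat x$. Taking $\bar x$ a cluster point of $\{x^{\delta_n}\}$, which minimizes $I$ by part \ref{it:mainthm:min}, \cref{lem:postballprob} gives $J^{\delta_n}(w_n)/J^{\delta_n}(x^{\delta_n}) \to \exp(I(\bar x) - I(\hat x)) = 1$ along the subsequence; since $x^{\delta_n}$ maximizes $J^{\delta_n}$ over $X$ (by the corollary following \cref{lem:projinc}), $J^{\delta_n}(x^{\delta_n}) = M_{\mathrm{post}}^{\delta_n}$ is the maximal posterior ball probability, and this shows $\{w_n\}$ is an approximating sequence verifying \eqref{eq:gmode_conv}; an argument over all sequences (invoking part \ref{it:mainthm:min} so that \emph{every} cluster point is a minimizer) then establishes $\hat x$ is a generalized MAP estimate. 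Conversely, suppose $\hat x$ is a generalized MAP estimate; then $\hat x$ is a generalized mode of $\mu^y$, hence (since $\mu^y \ll \mu_0$ with a strictly positive bounded density on $\Eg$ by \cref{lem:phibounded}, so the supports agree) $\hat x \in \Eg$. Choosing an approximating sequence and comparing with a minimizer of $I$ via \cref{lem:postballprob} again, the generalized-mode relation \eqref{eq:gmode_conv} forces the exponential $\exp(\min I - I(\hat x)) = 1$, whence $I(\hat x) = \min I$. The main obstacle I anticipate is in this converse direction: making rigorous that a generalized MAP estimate must lie in $\Eg$ (so that $I(\hat x) < \infty$), and that its arbitrary approximating sequence can be compared via \cref{lem:postballprob}, which may require first projecting the approximating sequence onto $\Egd[\delta_n]$ using \cref{lem:projinc} to land in the regime where \cref{lem:postballprob} applies.
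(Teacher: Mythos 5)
Your argument follows the paper's proof in all essentials: part (i) is the same compactness argument, part (ii) is the same comparison of $x^{\delta_n}$ with $P^{\delta_n}\tilde x$ via \cref{lem:postballprob}, and the converse of (iii) likewise projects the approximating sequence with $P^{\delta_n}$ and uses \cref{lem:projinc} to compare against a minimizing cluster point, exactly as in the paper. The one place you deviate is the forward implication of (iii): the paper avoids the non-uniqueness of cluster points of $\{x^\delta\}_{\delta>0}$ by a direct Lipschitz estimate on $\Phi$ over $B^\delta(w^\delta)$ and $B^\delta(x^\delta)$, yielding $\mu^y(B^\delta(w^\delta))/M^\delta \ge \exp(\Phi(x^\delta)-\Phi(x^*)-3L\delta) \to 1$ along the full family, whereas you reuse \cref{lem:postballprob} together with part (ii) and a subsequence--subsequence argument (every subsequence of the bounded ratio has a further subsequence converging to $\exp(I(\bar x)-I(\hat x))=1$, hence the whole sequence converges). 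Your route is valid and arguably more economical in that it recycles the Onsager--Machlup lemma instead of redoing the Lipschitz estimates, at the cost of having to spell out the subsequence extraction that you currently only gesture at with ``an argument over all sequences''; the paper's direct estimate buys a cleaner statement that needs no subsequences at all for that implication.
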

\begin{proof} 
    \emph{Ad \ref{it:mainthm:exist}:}
    By definition, $x^\delta \in \Egd \subset \Eg$ for all $\delta > 0$. Since $\Eg$ is compact and closed by \cref{prop:Egclosed}, there exists a convergent subsequence, again denoted by $\{x^{\delta_n}\}_{n\in\N}$, with limit $\bar{x} \in \Eg$.

    \emph{Ad \ref{it:mainthm:min}:}
    Let now $\bar x\in \Eg$ be the limit of an arbitrary convergent subsequence -- still denoted by $\{x^{\delta_n}\}_{n\in\N}$ -- and assume that $\bar{x}$ is not a minimizer of $I$. 
    By \cref{lem:exmin}, a minimizer $x^* \in \Eg$ of $I$ exists and by assumption satisfies $I(\bar{x}) > I(x^*)$.
    Moreover, $P^\delta x^* \to x^*$ as $\delta \to 0$ by \cref{lem:projconv}.
    Now the definition of $x^\delta$ and \cref{lem:postballprob} yield
    \begin{equation*}
        1 \le \lim_{n \to \infty} \frac{\mu^y(B^{\delta_n}(x^{\delta_n}))}{\mu^y(B^{\delta_n}(P^{\delta_n} x^*))} = \exp(I(x^*) - I(\bar{x})) < 1,
    \end{equation*}
    a contradiction. So $\bar{x}$ is in fact a minimizer of $I$.

    \emph{Ad \ref{it:mainthm:MAP}:}
    Now let $x^* \in X$ be a minimizer of $I$ which by definition satisfies $x^*\in \Eg$.
    To see that $x^*$ is a generalized MAP estimate, we choose $w^\delta := P^\delta {x^*} \in \Egd$ for every $\delta>0$, which implies that $\norm{w^\delta - x^*}_\infty \le \delta$.
    Furthermore, by definition of $x^\delta$ and $M^\delta$,  
    \begin{equation*}
        \mu^y(B^\delta(x^\delta)) = \max_{x \in X} \mu^y(B^\delta(x)) = M^\delta \qquad\text{for all }\delta>0.
    \end{equation*}
    By \cref{ass:likelihood} and the boundedness of $\bigcup_{x\in\Eg}B^\delta(x)$, we can use the Lipschitz continuity of $\Phi$ to obtain the estimate
    \begin{equation*}
        \Phi(x) \le \Phi(x^*) + L\left(\norm{x - w^\delta}_\infty + \norm{w^\delta - x^*}_\infty\right) \le \Phi(x^*) + 2L\delta \quad \text{for all }x \in B^\delta(w^\delta). 
    \end{equation*}
    On the other hand, a similar argument shows that
    \begin{equation*}
        \Phi(x) \ge \Phi(x^\delta) - L\norm{x - x^\delta}_\infty \ge \Phi(x^\delta) - L\delta \quad \text{for all }x \in B^\delta(x^\delta). 
    \end{equation*}
    Consequently,
    \begin{equation*}
        \begin{aligned}[t]
            \mu^y(B^\delta(w^\delta))
            &= \frac{1}{Z(y)} \int_{B^\delta(w^\delta)} \exp(-\Phi(x)) \mu_0(\di x) \\
            &\ge \exp(-\Phi(x^*) - 2L\delta) \frac{1}{Z(y)} \mu_0(B^\delta(w^\delta)),
        \end{aligned}
    \end{equation*}
    and
    \begin{equation*}
        \begin{aligned}[t]
            M^\delta &= \mu^y(B^\delta(x^\delta))
            = \frac{1}{Z(y)} \int_{B^\delta(x^\delta)} \exp(-\Phi(x)) \mu_0(\di x) \\
            &\le \exp(-\Phi(x^\delta) + L\delta) \frac{1}{Z(y)} \mu_0(B^\delta(x^\delta)).
        \end{aligned}
    \end{equation*}
    As $\mu_0(B^\delta(w^\delta) = \mu_0(B^\delta(x^\delta))$ by \cref{lem:projconst}, combining these two estimates yields
    \begin{equation*}
        \frac{\mu^y(B^{\delta}(w^\delta))}{\mu^y(B^\delta(x^\delta))}
        \ge \exp(\Phi(x^\delta) - \Phi(x^*) - 3L\delta).
    \end{equation*}
    Now the minimizing property of $x^*$ implies that
    \begin{equation*}
        \lim_{\delta \to 0} \frac{\mu^y(B^{\delta}(w^\delta))}{\mu^y(B^\delta(x^\delta))}
        \ge \lim_{\delta \to 0} \exp(-3L\delta) = 1
    \end{equation*}
    and hence that $x^*$ is a generalized MAP estimate.

    Conversely, let $\hat{x}$ be a generalized MAP estimate and $\{\delta_n\} \subset (0,\infty)$ such that $\delta_n \to 0$ with corresponding approximating sequence $\{w_n\}_{n\in\N} \subset X$, i.e., $w_n \to \hat{x}$ and
    \begin{equation*}
        \lim_{n\to\infty} \frac{\mu^y(B^{\delta_n}(w_n))}{M^{\delta_n}} = 1. 
    \end{equation*}
    From \cref{lem:projconv}, it follows that $P^{\delta_n}w_n \to \hat x$ as well.
    We then have $\hat{x} \in \Eg$, because otherwise the closedness of $\Eg$ by \cref{prop:Egclosed} would imply that $\mu^y(w_n) = 0$ for $n$ large enough.
    Also, by definition of $x^\delta$, 
    \begin{equation*}
        \mu^y(B^\delta(x^\delta)) = \max_{x \in X} \mu^y(B^\delta(x)) = M^\delta 
    \end{equation*}
    for all $\delta > 0$.  
    Now by \ref{it:mainthm:exist} and \ref{it:mainthm:min}, we may extract a subsequence, again denoted by $\{x^{\delta_n}\}_{n\in\N}$, such that $x^{\delta_n}\to \bar{x} \in \Eg$ and $\bar x$ is a minimizer of $I$.	
    \Cref{lem:postballprob,lem:projinc} then yield that
    \begin{equation*}
        \exp(I(\bar{x}) - I(\hat{x})) 
        = \lim_{n \to \infty} \frac{\mu^y(B^{\delta_n}(P^{\delta_n} w_n))}{\mu^y(B^{\delta_n}(x^{\delta_n}))} 
        \ge \lim_{n \to \infty} \frac{\mu^y(B^{\delta_n}(w_n))}{\mu^y(B^{\delta_n}(x^{\delta_n}))} = 1.
    \end{equation*}
    Hence, $I(\bar{x}) \ge I(\hat{x})$, i.e., $\hat{x}$ is a minimizer of $I$.
\end{proof}

\begin{remark}\label{rem:ivanov}
    \Cref{thm:mainthm} shows that for inverse problems subject to Gaussian noise as in the following section, the generalized MAP estimate coincides with Ivanov regularization with the specific choice of the compact set $\Eg$; compare \eqref{eq:ivanov_functional} below with, e.g., \cite{IvaVasTan02,LorWor13,NeuRam14}. Hence, Ivanov regularization can be considered as a non-parametric MAP estimate for a suitable choice of the compact set the solution is restricted to. 

    Furthermore, we point out that for linear inverse problems where the Ivanov functional is convex, the minimizers generically lie on the boundary of the compact set; see, e.g., \cite[Prop.~2.2\,(iii)]{NeuRam14} and \cite[Cor.~2.6]{CK18}. However, this is not possible for strong modes due to \cref{prop:strongmodes}\,(i), which further indicates the need to consider generalized modes in this setting.
\end{remark}

\begin{remark}
    Following up on \cref{rem:ivanov}, we briefly remark on using the variational characterization \eqref{eq:ivanov_functional} for the computation of the generalized MAP estimate. Assume first for simplicity that $\Phi(x) = \frac12\norm{x-z}_2^2$ for given $z\in \ell^2$. Proceeding as in \cite{CK18}, one can then use classical tools from convex analysis \cite{NAO:2017} to show that the generalized MAP estimate $\hat x$ has the componentwise representation
    \begin{equation}\label{eq:map_denoise}
        \hat x_k = \proj_{[-\gamma_k,\gamma_k]}(z_k) := 
        \begin{cases} 
            \gamma_k  & \text{if } z_k > \gamma_k,\\
            z_k  & \text{if } |z_k| \leq \gamma_k,\\
            -\gamma_k & \text{if } z_k <-\gamma_k.
        \end{cases}
    \end{equation}
    In particular, any finite-dimensional MAP estimate $\hat x^N$ obtained by truncating the random series \eqref{eq:xidef} at $k=N$ coincides with the infinite-dimensional MAP estimate $\hat x$ up to this index.

    If $\Phi(x) = \frac12\norm{F(x)-z}_2^2$ for some (possibly nonlinear but Fréchet-differentiable) forward operator $F:\ell^2\to\ell^2$, \eqref{eq:map_denoise} becomes
    \begin{equation*}
        \hat x_k = \proj_{[-\gamma_k,\gamma_k]}\left(\hat x_k - [F'(\hat x)^*(F'(\hat x)-z)]_k\right),
    \end{equation*}
    which is a Lipschitz continuous fixed point equation from $\ell^2$ to $\ell^2$ for $\hat x$ that can be solved by either a fixed point iteration (i.e., forward--backward splitting similar to (F)ISTA) or a semismooth Newton method \cite{NAO:2017}. Again -- assuming a suitable discretization for $F$ -- this componentwise characterization can be used to show convergence of finite-dimensional MAP estimates $\hat x^N \to \hat x$ as $N\to \infty$. 
\end{remark}

\section{Consistency in nonlinear inverse problems with Gaussian noise}
\label{sec:consistency}

Finally, we show consistency in the small noise limit of the generalized MAP estimate from \cref{sec:MAP}. For the sake of presentation, we focus here on the special case of finite-dimensional data and additive Gaussian noise.

Let $Y = \R^K$ for a fixed $K \in \N$, endowed with the Euclidean norm, and $F:X\to Y$ be a closed nonlinear operator.
Assuming $x \sim \mu_0$ for the prior measure $\mu_0$ defined via \eqref{eq:prior_def} and $y\in Y$ is a corresponding measurement of $F(x)$ corrupted by additive Gaussian noise with mean $0$ and positive definite covariance operator $\Sigma \in \R^{K \times K}$ scaled by $\delta>0$, the corresponding posterior measure $\mu^y$ is $\mathcal{N}(0,\Sigma)$-almost surely given by
\begin{equation*}
    \frac{\di \mu^y}{\di \mu_0}(x) = \frac{1}{Z(y)} \exp(-\Phi(x;y))\quad\text{for $\mu_0$-almost all }x\in X,
\end{equation*}
where
\begin{equation*}
    \Phi(x;y) := \frac{1}{2\delta^2} \norm{ \Sigma^{-\frac12}(F(x) - y) }_Y^2
\end{equation*}
for all $x \in X$ and $y \in Y$.
Furthermore, we assume that for every $R > 0$, the restriction of $F$ to $B^R(0)$ is Lipschitz continuous, so that \cref{ass:likelihood} is satisfied. 
Then \cref{thm:mainthm} yields that the generalized MAP estimates for $\mu^y$ are $\mathcal{N}(0,\Sigma)$-almost surely given by the minimizers of the functional
\begin{equation}\label{eq:ivanov_functional}
    I(x) = \frac{1}{2\delta^2} \norm{ \Sigma^{-\frac12}(F(x) - y) }_Y^2 + \iota_\Eg(x).
\end{equation}

Now let $\{\delta_n\}_{n\in\N} \subset (0,\infty)$ with $\delta_n \to 0$ and consider a frequentist setup where we have a true solution $x^\dagger \in \Eg$ and a sequence $\{y_n\}_{n\in\N} \in Y$ of measurements given by
\begin{equation*}
    y_n = F(x^\dagger) + \delta_n \eta_n,
\end{equation*}
where $\eta_n \sim \mathcal{N}(0, \Sigma)$ are independently and identically distributed.
Moreover, define
\begin{equation}\label{eq:functional_n}
    I_n(x) := \frac{1}{2\delta_n^2} \norm{ \Sigma^{-\frac12}(F(x) - y_n) }_Y^2 + \iota_\Eg(x)
\end{equation}
for all $n \in \N$ and $x \in X$. Let $x_n \in \Eg$ denote a minimizer of $I_n$ for all $n \in \N$. 
Then we have the following consistency result.
\begin{theorem} \label{thm:conssmallnoise}
    Suppose that $F$: $X \to Y$ is closed and $x^\dagger \in \Eg$. Then any sequence $\{x_n\}_{n\in\N}$ of minimizers of  \eqref{eq:functional_n} contains a convergent subsequence whose limit $\bar{x} \in \Eg$ satisfies $F(\bar{x}) = F(x^\dagger)$ almost surely.
\end{theorem}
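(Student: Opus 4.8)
The plan is to combine three ingredients: the compactness of $\Eg$ to extract convergent subsequences of minimizers, an optimality (energy) comparison that uses the true solution $x^\dagger$ as a competitor to bound the data residual, and the closedness of $F$ to pass the convergence through the operator. Since each $x_n$ minimizes $I_n$ and $x^\dagger \in \Eg$, the indicator terms vanish and the inequality $I_n(x_n) \le I_n(x^\dagger)$ reduces to
\begin{equation*}
    \norm{\Sigma^{-\frac12}(F(x_n) - y_n)}_Y^2 \le \norm{\Sigma^{-\frac12}(F(x^\dagger) - y_n)}_Y^2.
\end{equation*}
Because $y_n = F(x^\dagger) + \delta_n \eta_n$, the right-hand side equals $\delta_n^2 \norm{\Sigma^{-\frac12}\eta_n}_Y^2$.

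First I would turn this into a deterministic estimate on the image mismatch. Writing $F(x_n) - F(x^\dagger) = (F(x_n) - y_n) + \delta_n \eta_n$ and applying the triangle inequality in the (equivalent) norm $\norm{\Sigma^{-\frac12}\,\cdot\,}_Y$ yields
\begin{equation*}
    \norm{\Sigma^{-\frac12}(F(x_n) - F(x^\dagger))}_Y \le 2\delta_n \norm{\Sigma^{-\frac12}\eta_n}_Y.
\end{equation*}
Since $\Sigma$ is positive definite and $Y$ is finite-dimensional, this controls $\norm{F(x_n) - F(x^\dagger)}_Y$ up to a fixed constant, so it remains to show that the right-hand side tends to zero along a suitable subsequence.

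The hard part is the almost-sure handling of the noise factor $Z_n := \norm{\Sigma^{-\frac12}\eta_n}_Y$, which does \emph{not} vanish as $n \to \infty$ (indeed $\Sigma^{-\frac12}\eta_n$ is standard normal in $\R^K$, so $\Exp{Z_n^2} = K$). My plan is a two-stage subsequence extraction. Since $\delta_n \to 0$, I first select a \emph{deterministic} subsequence $\{n_k\}_{k\in\N}$ with $\delta_{n_k} \le 2^{-k}$; then $\Exp{(\delta_{n_k} Z_{n_k})^2} = K\delta_{n_k}^2$ is summable, so for each $\epsilon > 0$ the Markov inequality gives $\sum_k \Prob{\delta_{n_k} Z_{n_k} > \epsilon} < \infty$, and the Borel--Cantelli lemma yields $\delta_{n_k} Z_{n_k} \to 0$ almost surely (intersecting the events over rational $\epsilon$). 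On this probability-one event I then invoke the compactness of $\Eg$ (\cref{prop:Egclosed}) to extract a further subsequence along which $x_{n_k} \to \bar x \in \Eg$; the noise factor still tends to zero along it, so the estimate above forces $F(x_{n_k}) \to F(x^\dagger)$ in $Y$.

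Finally, having arranged $x_{n_k} \to \bar x$ and $F(x_{n_k}) \to F(x^\dagger)$ along one and the same subsequence, the closedness of $F$ immediately gives $F(\bar x) = F(x^\dagger)$, completing the argument. The delicate point is the ordering of the two extractions: one must first thin out the index set to tame the random residual (so that Borel--Cantelli applies despite the absence of any rate assumption on $\delta_n$) and only afterwards call on compactness, since a subsequence produced directly by compactness carries no control over the noise realizations.
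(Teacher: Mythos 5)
Your proposal is correct, and it reaches the same key estimate $\norm{\Sigma^{-\frac12}(F(x_n)-F(x^\dagger))}_Y \le 2\delta_n\norm{\Sigma^{-\frac12}\eta_n}_Y$ as the paper, but by a cleaner route (triangle inequality applied to $F(x_n)-y_n$ and $y_n-F(x^\dagger)$, rather than the paper's expansion of the square followed by Cauchy--Schwarz and Young). The two proofs then diverge in how they handle the probability and the compactness. For the noise factor, the paper takes expectations to get $\Exp{\norm{\Sigma^{-\frac12}(F(x_n)-F(x^\dagger))}_Y^2}\le 4K\delta_n^2\to 0$, deduces convergence in probability, and extracts an almost surely convergent subsequence; your Borel--Cantelli argument on a thinned deterministic subsequence with $\delta_{n_k}\le 2^{-k}$ is an equally valid and more self-contained way to land at the same almost sure statement. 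For the subsequence of the $x_n$ themselves, the paper goes to considerable lengths to produce a \emph{deterministic} limit point $\bar x$: it takes coordinatewise limits of the expectations $\Exp{\dualpair{e_k^*,x_n}_X}$, runs a diagonal argument, and upgrades to almost sure strong convergence via compactness of $\Eg$; your extraction via compactness is instead performed per realization, so both the sub-subsequence and the limit $\bar x$ may depend on $\omega$. This is consistent with the theorem as stated (the conclusion $F(\bar x)=F(x^\dagger)$ is only claimed almost surely) and it still supports \cref{cor:consFinj}: along your deterministic thinned subsequence, injectivity forces every subsequential limit of $x_{n_k}(\omega)$ to equal $x^\dagger$, whence $x_{n_k}\to x^\dagger$ almost surely on the compact set $\Eg$ and the subsequence--subsequence criterion for convergence in probability applies. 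What the paper's heavier construction buys is a single deterministic $\bar x$ even without injectivity; what yours buys is brevity and the avoidance of the delicate passage from convergence of expectations to convergence in probability. Your closing remark about the order of the two extractions is well taken, although the thinning is not strictly forced: one could also first pass from convergence in probability to an almost surely convergent subsequence, as the paper does.
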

\begin{proof}
    Let $e_k^*(x) := x_k$ for all $x \in X$ and $k \in \N$. As $x_n \in \Eg$ for all $n \in \N$, the sequence $\{x_n\}_{n\in\N}$ is almost surely bounded with 
    \begin{equation*}
        \abs{\dualpair{e_k^*,x_n}_X} =\abs{[x_n]_k} \le \gamma_k \quad\text{almost surely for all } k \in \N. 
    \end{equation*}
    We can thus extract for every $k\in \N$ a subsequence with $\Exp{\dualpair{e_k^*,x_n}_X} \to \bar{x}_k$ and $\abs{\bar{x}_k} \le \gamma_k$. Here and in the following, we pass to the subsequence without adapting the notation. By a diagonal sequence argument, we can thus construct a sequence, again denoted by $\{x_n\}_{n\in\N}$, satisfying
    \begin{equation*}
        \Exp{\dualpair{e_k^*,x_n}_X} \to \bar{x}_k \quad \text{for all }k \in \N 
    \end{equation*}
    and $\bar{x} := \sum_{k\in\N} \bar{x}_k e_k \in \Eg$.
    We now write for every $v \in X^* \cong \ell^1$ and $N \in \N$
    \begin{equation*}
        \begin{aligned}[t]
            \bigabs{\Exp{\dualpair{v,x_n - \bar{x}}_X}} 
            &= \bigabs{\Exp{\textstyle\sum_{k = 1}^\infty \dualpair{v,e_k}_X \dualpair{e_k^*,x_n - \bar{x}}_X}} \\
            &\le \sum_{k=1}^N \abs{\dualpair{v,e_k}_X} \bigabs{\Exp{\dualpair{e_k^*,x_n - \bar{x}}_X}} + \sum_{k=N+1}^\infty \abs{\dualpair{v,e_k}_X} 2\gamma_k \\
            &\le \norm{v}_{X^*} \sup_{k \in \{1,\dots,N\}} \bigabs{\Exp{\dualpair{e_k^*,x_n - \bar{x}}_X}}
            + 2\norm{v}_{X^*} \sup_{k \ge N+1} \gamma_k.
        \end{aligned}
    \end{equation*}
    Since $\gamma_k\to$, we can for every $\epsilon > 0$ choose $N$ large enough such that 
    \begin{equation*}
        2\norm{v}_{X^*} \sup_{k \ge N+1} \gamma_k \le \frac{\epsilon}{2}, 
    \end{equation*}
    and then choose $n_0$ large enough such that also
    \begin{equation*}
        \norm{v}_{X^*} \bigabs{\Exp{\dualpair{e_k^*,x_n - \bar{x}}_X}} \le \frac{\epsilon}{2} \quad \text{for all } n \ge n_0 \quad\text{and } k \in \{1,\dots,N\} 
    \end{equation*}
    since $\Exp{\dualpair{e_k^*,x_n}_X} \to \dualpair{e_k^*,\bar{x}}_X$.
    Consequently, 
    \begin{equation*}
        \abs{\Exp{\dualpair{v,x_n - \bar{x}}_X}} \le \epsilon \qquad\text{for all } n \ge n_0,
    \end{equation*}
    i.e., $\Exp{\dualpair{v,x_n - \bar{x}}_X} \to 0$. 
    From this we conclude that $\dualpair{v,x_n - \bar{x}}_X \to 0$ in probability as $n \to \infty$ for all $v \in X^*$. 
    This implies the existence of a subsequence that converges weakly to $\bar{x}$ almost surely.
    Therefore, by compactness of $\Eg$, a further subsequence converges strongly to $\bar{x}$ almost surely.

    Furthermore, inserting the definition of $y_n$ yields that
    \begin{equation*}
        \begin{aligned}[t]
            I_n(x) &= \frac{1}{2\delta_n^2} \norm{ \Sigma^{-\frac12}(F(x^\dagger) - F(x) + \delta_n\eta_n) }_Y^2 \\
            &= \frac{1}{2\delta_n^2} \norm{ \Sigma^{-\frac12}(F(x^\dagger) - F(x)) }_Y^2
            + \frac{1}{\delta_n} \dualpair{ \Sigma^{-\frac12}(F(x^\dagger) - F(x)), \Sigma^{-\frac12}\eta_j}_Y + \frac12 \sum_{j=1}^n \norm{ \Sigma^{-\frac12}\eta_j }_Y^2
        \end{aligned}
    \end{equation*}
    for all $x \in \Eg$. Note that the first two terms vanish for $x=x^\dag\in\Eg$ and that $I_n(x_n)\leq I_n(x^\dag)$ by definition of $x_n$. Rearranging this inequality and using Cauchy's and Young's inequalities, we thus obtain that
    \begin{equation*}
        \begin{aligned}[t]
            \norm{ \Sigma^{-\frac12}(F(x^\dagger) - F(x_n)) }_Y^2
            &\le - 2\delta_n \dualpair{ \Sigma^{-\frac12}(F(x^\dagger) - F(x_n)), \Sigma^{-\frac12}\eta_j}_Y \\
            &\le 2\delta_n \norm{\Sigma^{-\frac12}(F(x^\dagger) - F(x_n))}_Y \norm{\Sigma^{-\frac12}\eta_j}_Y \\
            &\le \frac12 \norm{\Sigma^{-\frac12}(F(x^\dagger) - F(x_n))}_Y^2 + 2\delta_n^2 \norm{\Sigma^{-\frac12}\eta_j}_Y^2.
        \end{aligned}
    \end{equation*}
    Consequently,
    \begin{equation*}
        \norm{ \Sigma^{-\frac12}(F(x_n) - F(x^\dagger)) }_Y^2 \le 4\delta_n^2 \norm{\Sigma^{-\frac12}\eta_j}_Y^2, 
    \end{equation*}
    and hence
    \begin{equation*}
        \Exp{\norm{ \Sigma^{-\frac12}(F(x_n) - F(x^\dagger)) }_Y^2} \le 4\delta_n^2 \Exp{\norm{\Sigma^{-\frac12}\eta_j}_Y^2} = 4K\delta_n^2\to 0, 
    \end{equation*}
    which also implies that
    \begin{equation*}
        \norm{ \Sigma^{-\frac12}(F(x_n) - F(x^\dagger)) }_Y \to 0 \quad\text{in probability.}
    \end{equation*}
    After passing to a further subsequence, we thus obtain that $F(x_n)\to F(x^\dagger)$ almost surely.
    The closedness of $F$ then yields that $F(\bar{x}) = F(x^\dagger)$ almost surely.
\end{proof}
Note that although \cref{thm:conssmallnoise} does not require the Lipschitz continuity of $F$ on bounded sets, in general without it we do not know if the minimizers of $I_n$ exist or if they are generalized MAP estimates for $\mu^y$.

By a subsequence-subsequence argument, we can obtain convergence in probability of the full sequence under the usual assumption on $F$.
\begin{corollary} \label{cor:consFinj}
    If $F$ is injective, then $x_n \to x^\dagger$ in probability as $n \to \infty$.
\end{corollary}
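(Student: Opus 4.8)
The plan is to combine \cref{thm:conssmallnoise} with the injectivity of $F$ and the standard subsequence characterization of convergence in probability. First I would observe that injectivity upgrades the conclusion of \cref{thm:conssmallnoise}: if $\bar x \in \Eg$ is the limit of a convergent subsequence of minimizers, then $F(\bar x) = F(x^\dagger)$ almost surely, and injectivity of $F$ forces $\bar x = x^\dagger$. Thus every subsequential limit provided by the theorem is the single deterministic point $x^\dagger$.

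Next I would invoke the subsequence principle: in the metric space $(X, \norm{\cdot}_\infty)$, a sequence of random variables converges in probability to a deterministic limit if and only if every subsequence admits a further subsequence that converges almost surely to that limit. To verify the latter, take an arbitrary subsequence $\{x_{n_k}\}_{k\in\N}$. Since each $x_{n_k}$ is a minimizer of $I_{n_k}$ and $\delta_{n_k} \to 0$, this subsequence is itself a sequence of minimizers to which \cref{thm:conssmallnoise} applies. The theorem then yields a further subsequence that converges strongly in $X$ -- hence almost surely -- to some $\bar x \in \Eg$ with $F(\bar x) = F(x^\dagger)$ almost surely; by the first step, $\bar x = x^\dagger$. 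Therefore every subsequence of $\{x_n\}_{n\in\N}$ has a further subsequence converging almost surely to $x^\dagger$, and the subsequence principle gives $x_n \to x^\dagger$ in probability.

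The argument is essentially a bookkeeping upgrade of the already-established \cref{thm:conssmallnoise}, so there is no substantial analytic obstacle. The only point requiring care is that the almost-sure event on which a given further subsequence converges may depend on the chosen subsequence; this causes no difficulty precisely because the limit $x^\dagger$ is deterministic, which is exactly what the subsequence characterization of convergence in probability requires. Separability of $X$ ensures the relevant measurability and that this metric-space characterization is applicable.
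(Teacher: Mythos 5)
Your proposal is correct and follows essentially the same route as the paper: apply \cref{thm:conssmallnoise} to an arbitrary subsequence, use injectivity to identify the almost-sure limit of the further subsequence as $x^\dagger$, and conclude via the standard subsequence characterization of convergence in probability to a deterministic limit (the paper cites \cite[Cor.~6.13]{Klenke:2014} for this equivalence). Your additional remarks on the subsequence-dependence of the almost-sure event and on the role of the deterministic limit are accurate but not needed beyond what the cited equivalence already provides.
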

\begin{proof}
    We can apply the proof of \cref{thm:conssmallnoise} to every subsequence of $\{x_n\}_{n\in\N}$ to obtain a further subsequence that converges to $\bar{x} = x^\dagger$ almost surely. This in turn is equivalent to the convergence of the whole sequence to $x^\dagger$ in probability by \cite[Cor.~6.13]{Klenke:2014}.
\end{proof}

\begin{remark}
    Consistency in the large sample size limit can be shown analogously. 
    Specifically, assuming a fixed noise level $\delta>0$ (which without loss of generality we can fix at $\delta=1$) and $n$ independent measurements $y_1, \dots, y_n \in Y$, the posterior measure $\mu^n$ is $\mathcal{N}(0,\Sigma)^n$-almost surely given by
    \begin{equation*}
        \frac{\di \mu^n}{\di \mu_0}(x) 
        := \left(\prod_{j=1}^n \frac{1}{Z(y_j)}\right) 
        \exp \left( -\frac12 \sum_{j=1}^n \norm{ \Sigma^{-\frac12}(F(x) - y_j) }_Y^2 \right)\quad\text{for $\mu_0$-almost all } x\in X.
    \end{equation*}
    Again we consider a frequentist setup with a true solution $x^\dagger \in \Eg$ and
    \begin{equation*}
        y_j = F(x^\dagger) + \eta_j \quad \text{for all }j \in \N.
    \end{equation*}
    Then it can be shown as in \cref{thm:conssmallnoise} that any sequence of minimizers $x_n \in \Eg$ of
    \begin{equation*}
        I_n(x) := \frac12 \sum_{j=1}^n \norm{ \Sigma^{-\frac12}(F(x) - y_j) }_Y^2 + \iota_\Eg(x)
    \end{equation*}
    contains a convergent subsequence whose limit $\bar{x} \in \Eg$ satisfies $F(\bar{x}) = F(x^\dagger)$ almost surely and that the full sequence converges to $x^\dagger$ in probability if $F$ is injective.
\end{remark}

\section{Conclusion}

We have proposed a novel definition of generalized modes and corresponding MAP estimates for non-parametric Bayesian statistics. Our approach extends the construction of \cite{dashti2013map} by replacing the fixed base point with an approximating sequence in the convergence of small ball probabilities. This allows covering cases where the previous approach fails, such as that of priors having discontinuous densities. Our definition coincides with the definition from \cite{dashti2013map} for Gaussian priors as well as for priors which admit an approximating sequence satisfying additional convergence properties. For uniform priors defined via a random series with bounded coefficients, such generalized modes can be shown to exist. Furthermore, the corresponding generalized MAP estimates in Bayesian inverse problems (i.e., generalized modes of the posterior distribution) can be characterized as minimizers of a functional that can be seen as a generalization of the Onsager--Machlup functional in the Gaussian case. For inverse problems with finite-dimensional Gaussian noise, the generalized MAP estimates are consistent in the small noise as well as in the large sample limit. This result can be extended to other noise models for which the likelihood satisfies \cref{ass:likelihood} by adapting the proof of \cref{thm:conssmallnoise} appropriately.

This work can be extended in several further directions. Clearly, the practical behavior of the generalized MAP estimate for the uniform prior, including its discretization and numerical solution, should be studied for relevant inverse problems. Regarding theoretical issues, the first question is whether generalized MAP estimates arise as $\Gamma$-limits of finite-dimensional MAP estimates in the sense that approximating sequences can be constructed from MAP estimates for a sequence of finite-dimensional problems. (Note that since MAP estimates may lie on the boundary of $E_\gamma$, see \cref{rem:ivanov}, these these can in general not simply be direct discretizations of the infinite-dimensional MAP estimate.)
It would furthermore be of interest to study whether a generalized Onsager--Machlup functional similar to \eqref{eq:onsagermachlup} can be derived for generalized MAP estimates of a wider class of priors than the ones considered in \cref{sec:prior,sec:MAP}. 
Of particular practical relevance would be Gaussian priors with non-negativity constraints, where our technique does not directly apply due to the lack of a series representation such as \eqref{eq:uniform}.
Finally, it is an open question whether our generalized MAP estimate can be interpreted as a proper Bayesian estimator, since prior work on this topic \cite{burgerlucka2014,pereyra2016} does not apply to discontinuous distributions in infinite dimensions.

\section*{Acknowledgments}
The authors would like to thank the anonymous reviewers for helpful and constructive suggestions on the presentation. 
TH was supported by the Academy of Finland via project 275177. PP was
partially funded by Finnish Centre of Excellence in Inverse Problems
(project 284715) and has been partially supported by Finnish Centre of Excellence in Inverse Modelling and Imaging (project 312119).

\bibliographystyle{jnsao}
\bibliography{generalizedmodes.bib}

\end{document}